\title[Codimension one partially hyperbolic diffeomorphisms]{Codimension one partially hyperbolic diffeomorphisms with a uniformly compact center foliation}
\author[Bohnet]{Doris Bohnet}
\address{Universit\'e de Bourgogne, France.}
\email{Doris.Bohnet@u-bourgogne.fr}
\thanks{This paper was partially supported by the \emph{Forschungsfond} of the Department of Mathematics and the \emph{K\"orperschaftsverm\"ogen} from the Universit\"at Hamburg as well as by the mathematical institut IMB of the Universit\'e de Bourgogne and substantially by the \emph{Conseil R\'egional de Bourgogne} thanks to a postdoctoral scholarship.}
\subjclass[2010]{37D30,37C15.}
\keywords{Partial hyperbolicity, center foliation, uniformly compact foliation, transitivity, Margulis measure.}
\date{\today}
\begin{document}

\begin{abstract}
We consider a partially hyperbolic $C^1$-diffeo\-morphism $f: M \rightarrow M$ with a uniformly compact $f$-invariant center foliation $\mathcal{F}^c$. We show that if the unstable bundle is one-dimensional and oriented, then the holonomy of the center foliation vanishes everywhere, the quotient space $M/\mathcal{F}^c$ of the center foliation is a torus and $f$ induces a hyperbolic automorphism on it, in particular, $f$ is centrally transitive.\\
We actually obtain further interesting results without restrictions on the unstable, stable and center dimension: we prove a kind of spectral decomposition for the chain recurrent set of the quotient dynamics, and we establish the existence of a holonomy invariant family of measures on the unstable leaves (Margulis measure).   
\end{abstract}
\maketitle

\section{Introduction}

The aim of this article is a classification of partially hyperbolic systems with one-dimensional unstable (or stable) direction under the additional assumption of a uniformly compact center foliation. A $C^1$-diffeomorphism $f: M \rightarrow M$  is called \textit{partially hyperbolic} if there exists an invariant, non-trivial decomposition of the tangent bundle $$TM = E^s \oplus E^{c} \oplus E^{u},\quad d_xfE^{\alpha}(x)=E^{\alpha}\left(f(x)\right), \;\alpha=s,c,u,$$ and if for every $x \in M$ and all unit vectors $v^s \in E^s(x), v^u \in E^u(x)$ and $v^c \in E^c(x)$ it holds
$$\left\|d_xf(v^s) \right\| < \left\|d_xf(v^c)\right\| < \left\|d_xf(v^u)\right\|, \; \left\|d_xf(v^s) \right\| < 1 < \left\|d_xf(v^u)\right\|.$$ 
The stable and unstable bundles $E^s$ and $E^u$ are uniquely integrable to $f$-invariant stable and unstable foliations $\mathcal{F}^s$ and $\mathcal{F}^u$. By contrast, the center bundle $E^c$ is in general not uniquely or even weakly integrable (cf. \cite{W98,HHU10a}). We assume in the following that there exists a uniformly compact $f$-invariant center foliation $\mathcal{F}^c$ everywhere tangent to the center bundle $E^c$ where a foliation is called \emph{uniformly compact} if every leaf is a compact manifold and the leaf volume is bounded (see Section~\ref{s.background}).\\
Recall that every Anosov diffeomorphism with a one-dimensional unstable direction is conjugate to a hyperbolic toral automorphism (see \cite{F70,N70}). The hypo\-thesis of a compact center foliation allows us to recover (topological) hyperbolicity for the induced dynamics on the quotient space of center leaves such that we can follow the lines of the classical result and show that the induced dynamics on the quotient is actually conjugate to a hyperbolic toral diffeomorphism. \\
The difficulties arise at two places: first, the center holonomy may not be trivial so that the induced dynamics is certainly not hyperbolic (see example in \cite{BB12}), so the first step is to show that the holonomy group is trivial (if the unstable bundle is oriented). Second, the quotient space is a priori a topological manifold without a differentiable structure and the induced dynamics is not differentiable, so we pass usually to the manifold $M$ whenever differentiability is needed for the arguments. \\    
Important tools to circumvent these difficulties are the dynamical coherence for $f$ and the Shadowing Lemma for orbits of center leaves proved in \cite{BB12} which allows us to prove the central transitivity of $f$ (if the unstable bundle is one-dimensional) which quite directly implies the triviality of the center holonomy. Further, dynamical coherence means that center stable and center unstable foliations exist, and they can be projected to stable and unstable (topological) foliations on the quotient space. 

Let us now state the main result of this article: 
\begin{thm}
Let $f:M \rightarrow M$ be a partially hyperbolic $C^1$-diffeomorphism on a compact smooth connected manifold $M$. Assume that the center foliation $\mathcal{F}^c$ is an $f$-invariant uniformly compact foliation.\\
If $\dim E^u=1$ and $E^u$ is oriented, then the leaf space $M/\mathcal{F}^c$ is a torus $\mathbb{T}^q$ and the dynamics $F:M/\mathcal{F}^c \rightarrow M/\mathcal{F}^c$ induced by $f$ is topologically conjugate to an Anosov automorphism on $\mathbb{T}^q$ where $q = \codim \mathcal{F}^c$. 
\label{maintheorem}
\end{thm}
The proof of Theorem~\ref{maintheorem} consists of three main steps: 
\begin{enumerate}
\item \textit{Central transitivity}: we show that under the assumptions above the diffeomorphism $f$ has a center leaf whose orbit ist dense (Theorem~\ref{thm_transitive}). 
\item \textit{Trivial holonomy}: the center leaves have only trivial holonomy (Theorem~\ref{t.trivial}).
\item \textit{Margulis measure on leaf space}: we establish the existence of a Margulis measure on the leaf space (Theorem~\ref{theorem_measure}). 
\end{enumerate}
\begin{rem}
If $E^u$ is not oriented, then the partially hyperbolic diffeomorphism $f$ can be lifted to the orientation cover of $E^u$. The center stable foliation lifts cano\-nically to a transversely oriented codimension one foliation. Hence, the condition of orientability is always fulfillable.  
\end{rem}
In fact, we establish the third step without restriction on the dimension of the unstable bundle the existence of a Margulis measure, a family of measures whose support lies on unstable leaves, so we state this result here in its full generality:
\begin{thm}[Existence of Margulis measure]
Let $f: M \rightarrow M$ be a $C^2$ partially hyperbolic diffeomorphism with an $f$-invariant compact center foliation with trivial holonomy. Suppose $f$ is centrally transitive. Then there exists a family $\left\{\mu_{L^u}\right\}_{L^u \in \mathcal{F}^u}$ of measures with the following properties: 
\begin{enumerate} 
\item For every $L^u \in \mathcal{F}^u$ the measure $\mu_{L^u}$ is a regular Borel measure on the unstable leaf $L^u$, and the measure is positive on non-empty open sets within $L^u$. 
\item The family of measures is invariant under the center stable holonomy.
\item The family of measures is uniformly expanding under $f$, i.e. there exists $\beta > 1$ such that $\mu{f(L^u)}(f(A)) = \beta\mu_{L^u}(A)$ for every open set $A \subset L^u$.
\end{enumerate}
\label{theorem_measure}
\end{thm}
As a direct corollary - following the classical construction by Margulis in \cite{M70} - we obtain a complete $f$-invariant measure $\mu$ on $M$ defined on $\mathcal{F}^c$-saturated sets.
\begin{corol}\label{c.measure}
There exists an $f$-invariant Margulis measure $\mu$ on $M$ defined on $\mathcal{F}^c$-saturated sets.
\end{corol} 
\begin{rem}[Why codimension one?]
\begin{itemize}
\item As mentioned before, the proof of Theorem~\ref{maintheorem} depends on a result about codimension-one Anosov diffeomorphisms. Consequently, Theorem \ref{maintheorem} cannot be extended to a system with higher-dimensional unstable bundle without essential changes of the proof. It should be remarked that a corresponding generalization in the case of Anosov diffeomorphisms that they are always transitive and conjugate to an algebraic automorphism is only conjectured and not yet proved. 
\item Another difficulty arises at the following point: a finite holonomy is a continuous group action of a finite group of periodic homeomorphisms on a smoothly embedded disk in the manifold $M$ which is a priori not linear. We do not know a lot about these so-called continuous transformation groups if the dimension of the disk is greater than $2$. 
\item Further, the following easy examples illustrate that in the case of a two-dimensional unstable bundle a $2$-cover is not enough to eliminate center leaves with non-trivial holonomy. Besides, there can exist $f$-invariant submanifolds in $M$ which consist of center leaves with non-trivial holonomy. 
\end{itemize}
\end{rem}
\begin{example}
\begin{enumerate}
\item Let $A \in \Sl(2,\mathbb{Z})$ be a hyperbolic matrix with eigenvalues $0 < \lambda < 1 < \mu$. Define the following matrix $F_A$ by $\diag(A,A,\id) \in \Gl(5,\mathbb{Z})$ which induces a partially hyperbolic toral automorphism $f_A$ on $\mathbb{T}^5$ with a one-dimensional compact center foliation $\left\{x\right\} \times \mathbb{S}^1$. Let $B:=\begin{pmatrix} 0 & -\id& 0\\\id & 0& 0\\0 & 0 & 1\end{pmatrix} \in \Gl(5,\mathbb{Z})$. We define a free action of $\mathbb{Z}_4$ on $\mathbb{T}^5$ by  $H:= B + \begin{pmatrix}0&0&0&0&\frac{1}{4}\end{pmatrix}^T$. Then $f_A$ commutes with this finite group action and $\mathbb{T}^5 / Hx \sim x$ is a manifold with a partially hyperbolic diffeomorphism $F_A$ induced by $f_A$. The leaf space of the center foliation is a $4$-orbifold with four singular points, $$\left(\frac{1}{2},\frac{1}{2},\frac{1}{2},\frac{1}{2}\right), \left(\frac{1}{2},0,\frac{1}{2},0\right), \left(0,\frac{1}{2},0,\frac{1}{2}\right), \left(0,0,0,0\right)$$ with non-trivial holonomy $H$ of order four and twelve singular points of order two.   
\item Let $F_A \in \Sl(5,\mathbb{Z})$ be the matrix defined above which induces a partially hyperbolic torus automorphism $f_A$ on $\mathbb{T}^5$ with a one-dimensional compact center foliation $\left\{x\right\} \times \mathbb{S}^1$. Let $B:=\diag(\id,-\id,1) \in \Gl(5,\mathbb{Z})$ and define a free action of $\mathbb{Z}_2$ on $\mathbb{T}^5$ by $H:=B  + \begin{pmatrix}0&0&0&0& \frac{1}{2}\end{pmatrix}^T$. Then $f_A$ commutes with this group action and the resulting manifold $\mathbb{T}^5 / Hx \sim x$ carries a partially hyperbolic diffeomorphism $F_A$ induced by $f_A$. The leaf space of the center foliation is a $4$-orbifold with four tori of singular points: $$\left\{(x_1,x_2,a,b)^T\;\big| x_1,x_2 \in \mathbb{R}/\mathbb{Z}\right\}, \quad a,b=0,\frac{1}{2}.$$   
\end{enumerate}
\end{example}

One motivation for the topic of this article was the question if the study of partially hyperbolic diffeomorphisms with a uniformly compact center foliation could be restricted to the case of a compact center foliation with trivial holonomy. This would be possible if we show that any system with finite holonomy can be lifted to a finite cover where the holonomy vanishes everywhere. It should be remarked that a priori - without assuming a partially hyperbolic dynamics on the manifold - such a cover does not exist: there exist uniformly compact foliations whose quotient space cannot be finitely covered by a manifold, but is a so-called bad orbifold (see \cite{CH03}, section 7 for examples of such foliations which are called bad compact Hausdorff foliations in that article). So, to answer this question dynamical properties, features of compact foliations with finite holonomy and their interplay have to be utilized at the same time. A way to validate this intuition is to show that the center holonomy is globally affected whenever one locally builds in a center leaf with higher holonomy.\\ 
 
\textbf{Related work:} we would like to mention that Theorem~\ref{maintheorem} overlaps with a result by Gogolev in \cite{G11} where he proved a similar statement for partially hyperbolic diffeomorphisms with a uniformly compact center foliation of codimension 2 or with simply connected leaves and one-dimensional unstable direction. The assumption of simply connectedness directly implies trivial center holonomy.\\

\textbf{This work is organized as follows:} in the first section~\ref{s.background} we introduce and define the setting of this article and prove some preliminary results we need in the main proofs. In the subsequent sections the three steps leading to Theorem~\ref{maintheorem} are proved, every one in a separate section. Consequently, we start with the proof of the central transitivity (Theorem~\ref{thm_transitive}) in Section~\ref{s.transitive} where the first part is dedicated to the chain recurrent set and its decomposition (valid for any dimensions of the invariant foliations), and we go on with the proof of the trivial holonomy (Theorem~\ref{t.trivial}) of the center foliation under the assumption of a one-dimensional oriented unstable subbundle. Section~\ref{s.margulis} is employed by the construction of a family of measures with support on the unstable leaves (Theorem~\ref{theorem_measure}). This family is essential for the proof of the conjugacy between the quotient dynamics and a hyperbolic torus automorphism in Section~\ref{s.torus}.

\renewcommand{\abstractname}{Acknowledgements}
\begin{abstract}
I am deeply indebted to Christian Bonatti for his time to discuss the results of this paper with me. Further, I would like to thank S\'ebastien Alvarez for discussions about the Margulis measures and his helpful comments in this aspect. Last, but not least, the detailed comments and suggestions of the referees helped a lot to improve the present article.  
\end{abstract}

\section{Background}\label{s.background}

In the following paragraphs we introduce the basic concepts and the setting of this article, and we motivate our assumptions of a uniformly compact center foliation.  
\begin{rem}
We assume a compact center foliation $\mathcal{F}^c$ and accordingly, every center leaf is an embedded $C^1$-manifold in $M$. The center bundle $E^c$ is a continuous subbundle of $TM$ as a consequence of the definition of partial hyperbolicity, and the center foliation is tangent to this continuous distribution. Whenever we say \emph{foliation} in this article, we mean such a foliation with $C^1$-leaves tangent to a continuous distribution.
\end{rem}

\subsection{Uniformly compact foliations.}

We shortly recall the notion of holonomy that we use in this article: we consider a closed path $\gamma: [0,1] \rightarrow L$ with $\gamma(0)=x \in L$ which lies entirely inside one leaf $L$. 
We define a homeomorphism $H_{\gamma}$ on a smooth disk $T$ of dimension $q=\codim \mathcal{F}$ transversely embedded to the foliation $\mathcal{F}$ at $x$ which fixes $x$ and maps intersection points of a nearby leaf $L'$ onto each other following the path $\gamma$. We call it the \textit{holonomy homeomorphism along the path $\gamma$}. The definition of $H_{\gamma}$ (class of germs of $H_{\gamma}$) only depends on the homotopy class $[\gamma]$ of $\gamma$. Hence, we obtain a group homomorphism 
$$\pi_1\left(L,x\right) \rightarrow \Homeo\left(\mathbb{R}^q,0\right),$$
where $\Homeo(\mathbb{R}^q,0)$ denotes the classes of germs of homeomorphisms $\mathbb{R}^q \rightarrow \mathbb{R}^q$ which fix the origin.
The image of this group homomorphism is called the \textit{holonomy group} of the leaf $L$ and denoted by $\Hol(L,x)$. 
By taking the isomorphism class of this group it does not depend on the original embedding of $T$ in $M$. It is easily seen that any simply connected leaf has a trivial holonomy group. A classical result (proved in \cite{H77} and \cite{EMT77}) states that leaves with trivial holonomy group are generic.\\  
We say that a foliation has \textit{finite holonomy} if the holonomy group $\Hol(L,x)$ for any $x \in M$ is a finite group. Consequently, every holonomy homeomorphism is periodic. We call a foliation whose leaves are all compact and have finite holonomy group a \textit{uniformly compact foliation}. These foliations are also called \emph{compact Hausdorff foliations} in foliation theory referring to the fact that the quotient space of such a foliation is Hausdorff. This notion is also equivalent to the notion of a foliation whose leaves have \emph{uniformly bounded volume}.

\begin{example}[Partially hyperbolic diffeomorphisms with non-trivial center holonomy]
Bonatti and Wilkinson construct in \cite{BoW05} a partially hyperbolic diffeomorphism on the 3-torus fibering over the 2-torus which commutes with a $\mathbb{Z}_2$-action on the 3-torus. Hence, the system induces a partially hyperbolic diffeomorphism on the resulting quotient manifold with a center foliation by circles and four center leaves with non-trivial holonomy corresponding to the fixed points of the $\mathbb{Z}_2$-action on the 2-torus and coinciding with the singular leaves of the Seifert fibration over the 2-orbifold. 
\label{example_bundle}
\end{example}
This example can be generalized in the following way:
\begin{example}
Let $M$ be a compact manifold, $G$ a compact Lie group and $H < G$ a finite subgroup which acts freely on $M \times G$ by the canonical product action $(x,g) \mapsto (h.x, gh^{-1})$. Let $f: M \rightarrow M$ be a $H$-equivariant Anosov diffeomorphism, i.e. $f(h.x)=h.f(x)$ for all $x \in M$ and $h \in H$. Let $\phi: M \rightarrow G$ be a smooth $H$-invariant map. 
Then $N:=M\times_H G= M \times G /(h.x,g) \sim (x,gh^{-1})$ is a compact manifold and the partially hyperbolic skew product $F_{\phi}: M \times G \rightarrow M \times G$, $F_{\phi}(x,g)=(f(x),\phi(x)g)$ induces a partially hyperbolic diffeomorphism $f_{\phi}:N \rightarrow N$. The center foliation $\left\{\left\{x\right\} \times G/ (x \sim h.x)\right\}$ is a compact foliation and the holonomy group of every center leaf $L_x=G/H_x$ through $x$ equals the isotropy group $H_x$ of the $H$-action on $M$.
\end{example}

Although all leaves are compact manifolds, it can occur that the volume of a ball with increasing radius inside a leaf is unbounded. This cannot occur if the codimension of the foliation is one or two, as shown in \cite{R52} for codimension one on non-compact manifolds and in \cite{E72} for codimension two on compact manifolds. But Sullivan constructed in \cite{S76a} and \cite{S76b} a flow on a compact $5$-manifold, that is a codimension 4 foliation, such that every orbit is periodic but the length of orbits is unbounded. Later, Epstein and Vogt in \cite{EV78} constructed an example of a foliation by circles with the same properties on a compact $4$-manifold. In the following we will therefore assume a uniformly compact center foliation if the codimension is strictly greater than 2.\\
The reason why we restrict ourselves to uniformly compact foliations is closely related to the point that in that case there exist small saturated neighborhoods $V(L)$ of any leaf $L$ that are foliated bundles $p:V(L) \rightarrow L$ and the holonomy group is actually a group. This fact is the content of the Reeb Stability Theorem which is proved in the present form in \cite{CC00}:    
\begin{otherthm}[Generalized Reeb Stability]
If $L$ is a compact leaf of a foliation $\mathcal{F}$ on a manifold $M$ and if the holonomy group $\Hol(L,y)$ is finite, then there is a normal neighborhood $p:V~\rightarrow~L$ of $L$ in $M$ such that $\left(V, \mathcal{F}|_V,p\right)$ is a fiber bundle with finite structure group $\Hol(L,y)$. Furthermore, each leaf $L'|_V$ is a covering space $p|_{L'}: L'~\rightarrow~L$ with $k~\leq~\left|\Hol(L,y)\right|$ sheets and the leaf $L'$ has a finite holonomy group of order $\frac{\left|\Hol(L,y)\right|}{k}$. 
\label{thm1}
\end{otherthm}
Not only locally, but also globally the leaf space of a uniformly compact foliation has a good structure, as the quotient space of the foliation is Hausdorff (see \cite{E76} and \cite{M75}): 
\begin{otherthm}[Epstein, Millet]
Let $M$ be a foliated space with each leaf compact. Then the following conditions are equivalent: 
\begin{itemize}
 \item The quotient map $\pi: M \rightarrow M/\mathcal{F}$ is closed. 
\item Each leaf has arbitrarily small saturated neighborhoods. 
\item The leaf space $M/\mathcal{F}$ is Hausdorff. 
\item If $K \subset M$ is compact then the saturation $\pi^{-1}\pi K$ of $K$ is compact, this means, the set of leaves meeting a compact set is compact.
 \item The holonomy group on every leaf is finite.  
\end{itemize}
\label{theorem_epstein}
\end{otherthm}
Further, it can be elementarily proved that the quotient topology is generated by the Hausdorff metric $d_H$ between center leaves in $M$ and hence, $M/\mathcal{F}^c$ is a compact metric space. 
\begin{rem}
A natural question is if there can exist a partially hyperbolic diffeomorphism with an $f$-invariant compact center foliation which is not uniformly compact. There are results which answer this question negatively under additional assumptions by Gogolev in \cite{G11} and Carrasco in \cite{C10}. 
\end{rem}
\begin{rem}
In the case of a trivial center holonomy the resulting leaf space is a topological manifold. We showed in \cite{BB12} that in this case the partially hyperbolic diffeomorphism is dynamically coherent, and accordingly, the center stable and center unstable foliations induce stable and unstable topological foliations in the leaf space, the induced homeomorphism $F$ on the leaf space is then expansive and has a local product structure. 
\end{rem}

\subsection{Finite homeomorphism groups.}
We assume that the holonomy group of any center leaf is a finite group, unless in the case of a codimension-$2$ center foliation where it is implied by the compactness of the foliation. For every $x \in M$ and $L \in \mathcal{F}^c$ the holonomy group $\Hol(L,x)$ consists of periodic homeomorphisms and acts continuously on the smooth manifold $T$ transverse to $L$ at $x \in L$. Therefore, we can find some useful results on the fixed point sets of such homeomorphisms from the theory of finite transformation groups which we need and cite here. First, it is quite obvious that any non-trivial periodic homeomorphism of $(-1,1) \rightarrow (-1,1)$ which leaves the origin fixed reverses the orientation.  
\begin{lemma}
Let $\Homeo\left((-1,1),0\right)$ be the group of germs at $0 \in \mathbb{R}$ of homeomorphisms which leave $0 \in \mathbb{R}$ fixed. Let $G$ be a finite subgroup of $\Homeo\left((-1,1),0\right)$. Then $G$ has at most $2$ elements. If $G$ has $2$ elements, then one of them reverses orientation. 
\label{lemma_1dim}
\end{lemma}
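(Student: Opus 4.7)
The plan is to split the analysis according to whether elements preserve or reverse orientation. Any element of $\Homeo((-1,1),0)$, being a homeomorphism of an interval fixing $0$, is strictly monotone; so I would first define $G^+ \subseteq G$ as the subgroup of orientation-preserving elements, which has index $1$ or $2$ in $G$. The whole lemma then reduces to showing that $G^+$ is trivial as a group of germs.

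The heart of the argument is the following easy monotonicity observation, which I would carry out next. Suppose $h \in G^+$ is a non-identity germ with $h^n = \id$ near $0$. Pick a representative strictly increasing homeomorphism on some neighborhood of $0$. Since $h$ is not the identity germ, in every neighborhood of $0$ there exists $x$ with $h(x) \neq x$; assume $h(x) > x$ (the other case is symmetric). Strict monotonicity then yields $x < h(x) < h^{2}(x) < \cdots < h^{n}(x)$, contradicting $h^n(x) = x$. Therefore every periodic orientation-preserving germ at $0$ is the identity, i.e. $G^+ = \{\id\}$.

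To conclude, I would observe that if $g \in G$ is orientation-reversing, then $g^2 \in G^+$, hence $g^2 = \id$, so $g$ has order two. Moreover, two orientation-reversing elements $g_1, g_2 \in G$ would give $g_1 g_2 \in G^+ = \{\id\}$, forcing $g_1 = g_2^{-1} = g_2$. Hence $G$ contains at most one orientation-reversing element (and the identity), so $|G| \leq 2$ with the non-trivial element (if any) reversing orientation.

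I do not expect any genuine obstacle here: the only point requiring mild care is that all equalities and inequalities must be read as holding on some sufficiently small neighborhood of $0$ (since we work with germs), but the monotonicity contradiction survives any shrinking because one can always find $x$ arbitrarily close to $0$ witnessing $h(x) \neq x$ whenever $h$ is a non-trivial germ.
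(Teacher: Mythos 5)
Your proof is correct, and it follows the same line the paper takes implicitly: the paper states just before the lemma that a non-trivial periodic homeomorphism fixing $0$ must reverse orientation and treats the lemma as immediate, leaving no written proof. Your monotonicity argument ($x < h(x) < h^2(x) < \cdots < h^n(x)=x$ for a non-trivial increasing periodic germ) is exactly the standard justification of that claim, and the bookkeeping with the orientation-reversing elements (one square giving an orientation-preserving element, a product of two giving another) correctly yields $|G|\leq 2$ with the non-trivial element, if any, reversing orientation.
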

\begin{rem}
As a direct implication it follows that every transversely oriented codimension one foliation with finite holonomy has actually trivial holonomy. 
\end{rem}
The following theorem describes the fixed point set of periodic homeomorphisms of the two-dimensional disk $\mathbb{D}^2 \subset \mathbb{R}^2$ (announced in \cite{K19} and \cite{B19}, proved in \cite{E34}.) 
\begin{otherthm}[Theorem of Ker\'{e}kj\'{a}rt\'{o}]
Suppose $g: \mathbb{D}^2 \rightarrow \mathbb{D}^2$ is a periodic homeomorphism of period $n > 1$. Then $g$ is topologically conjugate to a orthogonal matrix $A \in O(2)$, i.e. there is a homeomorphism $h$ such that $g=h^{-1}Ah$. If $g$ is orientation-preserving, the set of fixed points is a single point which is not on the boundary. If $g$ is orientation-reversing, then $g^2=\id$ and the set of fixed points is a simple arc which divides $\mathbb{D}^2$ into two topological discs which are permuted by $g$. 
\label{kerekjarto}
\end{otherthm}
We will need that any periodic homeomorphism on a open disk is conjugate to a finite order rotation around the origin or to a reflection. This is a direct consequence of the Theorem of Ker\'{e}kj\'{a}rt\'{o}: 
\begin{corol}[\cite{CK94}]\label{c.rotation}
Let $f: \mathbb{R}^2 \rightarrow \mathbb{R}^2$ be a periodic homeomorphism. Then $f$ is conjugate to a finite order rotation around the origin or to the reflection about the $x$-axis.
\end{corol}
Unfortunately, equivalent statements in higher dimensions are only available in the case of a differentiable group action: Bochner showed in \cite{B45}) that if a compact group of local transformations has a fixed point and if it is uniformly smooth ($C^k, k \geq 1$), then there exists a smooth local change of coordinates such that the transformations act linearly with respect to this new coordinate system. In particular, if a uniformly compact foliation of codimension $q$ is transversely differentiable, the holonomy group can be assumed to act linearly as a subgroup of $O(q)$ on the transversal.\\
But Bing constructed in \cite{B52} a counterexample of a $2$-periodic homeomorphism $h$ of the $3$-dimensional euclidean space (or on the $3$-sphere) which has a double horned sphere as fixed point set and is therefore not topologically conjugate to a linear map. With this involution one can easily construct a foliation defined by a fibration on a mapping torus $ \mathbb{S}^3\times [0,1] / (x,0) \sim (h(x),1)$ which has leaves (corresponding to the fixed points of $h$) with a holonomy group generated by this homeomorphism $h$.   
\begin{question}
Does there exist a partially hyperbolic diffeomorphism with a compact center foliation such that 
\begin{enumerate}
\item the center holonomy of every leaf is induced by a global group action and
\item the fixed point set of this global group action (corresponding to the center leaves with maximal holonomy) is not a submanifold?
\end{enumerate}
More generally, does there exist a partially hyperbolic diffeomorphism with a uniformly compact center foliation which has a holonomy group which is not a group of isometries (there does not exist a local coordinate system such that the group acts as a group of isometries with respect to these local coordinates)?  
\end{question}

\subsection{Dynamical coherence and structural stability}
Although there exist three foliations tangent to the center, unstable and stable bundle respectively it is not clear that  the center unstable bundle $E^{cu}:=E^c\oplus E^u$ or the center stable bundle $E^{cs}$ are integrable. A counterexample on the 3-torus is constructed in \cite{HHU10a}. The property in question is defined as follows: 
a partially hyperbolic diffeomorphism $f$ is called \textit{dynamically coherent}, if there exist a center stable foliation $\mathcal{F}^{cs}$ tangent to $E^{cs}=E^s \oplus E^c$ and a center unstable foliation $\mathcal{F}^{cu}$ tangent to $E^{cu}=E^{c} \oplus E^{u}$.\\
dynamical coherence is essential for our investigation of partially hyperbolic systems with an $f$-invariant uniformly compact center foliation. Due to dynamical coherence any smooth manifold transverse to a center leaf $L$ is foliated by stable and unstable leaves induced by the center stable and center unstable foliation.\\
We showed in \cite{BB12} that every partially hyperbolic diffeomorphism with an $f$-invariant uniformly compact center foliation is dynamically coherent. 
Further, using the shadowing property for center leaves we showed in \cite{BB12} that $(f,\mathcal{F}^c)$ is plaque expansive and hence, structurally stable (in the sense of \cite{HPS70}), i.e. every partially hyperbolic diffeomorphism $g$, sufficiently close to $f$ with respect to the $C^1$-topology, has a center foliation which is leafwise topologically conjugate to the center foliation of $f$. This means, we can perturb $f$ a bit in order to get a diffeomorphism as smooth as we want without changing the dynamical properties. In order to utilize the absolute continuity of the stable and unstable foliations we have to use this fact later on to prove Theorem~\ref{theorem_measure}.   

\subsection{Codimension 2 and Codimension 3}\label{s.codimension}
Before we start with the proofs of our results we outline the proof of Theorem~\ref{maintheorem} for the case that the center foliation has codimension two or three. In these cases the low codimension allow a more direct approach:  
\subsubsection{Codimension 2:} We recall that we do not have to assume a finite holonomy in this case as it is implied by the compactness of leaves (cf. \cite{E72}). As $\dim E^u = \dim E^s=1$ the center holonomy is trivial if the stable and unstable bundles are oriented. So, by lifting the diffeomorphism to the at most $4$-sheeted orientation cover of $M$ we can assume that the center foliation is without holonomy. The $2$-dimensional leaf space $M/\mathcal{F}^c$ is then a topological manifold and the partially hyperbolic diffeomorphism $f$ induces an expansive homeomorphism $F$ on $M/\mathcal{F}^c$. A theorem by Lewowicz in \cite{L89} implies then that $F$ is conjugate to a hyperbolic toral automorphism. The proof that a $2$-sheeted cover suffices to eliminate the center holonomy follows the lines of Section~\ref{s.trivial} (see proof in Subsection~\ref{proof_codimension2}). Further, the result is more precise in this case: we can easily conclude that there are either four center leaves with non-trivial holonomy or none. Every non-trivial holonomy homeomorphism is conjugate to a rotation by $\pi$. The leaf space is either a $2$-orbifold with four elliptic points and underlying manifold a $2$-sphere or a $2$-torus.  
\subsubsection{Codimension 3:} Assuming that $\dim E^s=2$ and $\dim E^u=1$ we can also completely describe the possible center holonomy groups as we can use the Corollary~\ref{c.rotation} above. So, if $E^u$ and $E^s$ are oriented, we know that every center holonomy homeomorphism is the identity in the unstable direction and a finite rotation in the stable direction. It is left to rule out the possibility of a non-trivial rotation, this is done with the methods of Section~\ref{s.trivial}. Consequently, we have trivial center holonomy and the leaf space is a topological $3$-manifold with an induced expansive homeomorphism $F$. Utilizing the central transitivity of $f$, proved below, a result by Vieitez in \cite{V00} (together with \cite{F70}) implies directly that $F$ is conjugate to a hyperbolic toral automorphism. In contrary to codimension 2, it is indispensable for this conclusion that $f$ is centrally transitive (see Subsection~\ref{proof_codimension3} below). As in the case above, we get a complete description of the possible partially hyperbolic systems with a compact center foliation of codimension $3$: the leaf space $M/\mathcal{W}^c$ is either a $3$-orbifold $\mathbb{T}^3/\left(-\id\right)$ with $8$ singular points or a $3$-torus.

\section{The chain recurrent set of the quotient dynamics}\label{s.chains}
Let $f$ be a partially hyperbolic $C^1$-diffeomorphism with a uniformly compact center foliation $\mathcal{F}^c$. Denote by $F$ the homeomorphism on the quotient space $M/\mathcal{F}^c$ induced by $f$. We call the dynamics by $F$ on the leaf space $M/\mathcal{F}^c$ shortly the \emph{quotient dynamics}. The results in this subsection hold in this general setting without any restrictions on the codimension of the center foliation. For any $\epsilon > 0$ we call an \emph{$\epsilon$-chain} of center leaves from $L$ to $L'$ a finite sequence $\left\{L_i\right\}_{i=0}^n$ of center leaves such that $d_H(f(L_i),L_{i+1}) < \epsilon$ for $i=0,\dots,n-1$ and $L_0=L$ and $L_n=L'$. A leaf $L$ is \emph{chain recurrent} if for all $\epsilon > 0$ there is an $\epsilon$-chain from $L$ to $L$. Denote the set of chain recurrent leaves by $\Cr(F)$ called \emph{chain recurrent set}. We denote by 
$$\Cr(L):=\left\{L' \in \mathcal{F}^c\,\big|\; \forall \epsilon > 0 \; \exists \;\epsilon-\mbox{chain}\;\mbox{from}\; L\;\mbox{to}\; L' \;\mbox{and one from}\; L' \;\mbox{to}\; L\;\right\}$$
the \emph{chain recurrent class} of a center leaf $L$. Then the chain-recurrent set $\Cr(F)$ is a union $\bigcup \Cr(L)$ of chain-recurrent classes. Each chain recurrent class is a compact $\mathcal{F}^c$-saturated and $f$-invariant set. For $L \in \mathcal{F}^c$ we denote by $\mathcal{F}^s_{loc}(L)$ the union of local stable leaves through the center leaf $L$, i.e. $\bigcup_{z \in L} L^s_{loc}(z)$, and equivalently with $\mathcal{F}^u_{loc}(L)=\bigcup_{z\in L}L^u_{loc}(z)$. \\
We can then prove the following results about the chain recurrent classes of $F$:
\begin{lemma}
The induced homeomorphism $F: M/\mathcal{F}^c \rightarrow M/\mathcal{F}^c$ on the leaf space has only finitely many chain-recurrent classes. 
\label{lemma_chain_recurrent}
\end{lemma}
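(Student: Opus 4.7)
The plan is to prove that each chain recurrent class is both open and closed in the compact space $\Cr(F)$; since distinct classes are disjoint, this immediately forces their number to be finite. Closedness is routine: uniform continuity of $F$ together with the triangle inequality allows any $\epsilon$-chain ending or starting at $L$ to be truncated and re-attached at a nearby leaf, so a limit of leaves in a class stays in the class. The whole content therefore lies in openness.

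My first step is to convert the dynamical coherence of $f$, proved in \cite{BB12}, into a local product structure on the leaf space. The $f$-invariant center stable and center unstable foliations $\mathcal{F}^{cs}$ and $\mathcal{F}^{cu}$ are both $\mathcal{F}^c$-saturated and therefore descend to transverse stable and unstable structures $W^s$, $W^u$ on $M/\mathcal{F}^c$. Using generalized Reeb stability (Theorem~\ref{thm1}), I reduce inside any $\mathcal{F}^c$-saturated Reeb chart to a finite quotient of a foliated bundle, in which the transverse product of $\mathcal{F}^{cs}$ and $\mathcal{F}^{cu}$ is genuine; this yields, for every pair of leaves $L_1, L_2$ sufficiently close in $M/\mathcal{F}^c$, a local intersection leaf $L^{*} \in W^s_{loc}(L_1) \cap W^u_{loc}(L_2)$ with $d_H(F^n(L^{*}), F^n(L_1)) \to 0$ and $d_H(F^{-n}(L^{*}), F^{-n}(L_2)) \to 0$.

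Fixing $L \in \Cr(F)$ and a small neighbourhood $U$ of $L$, I will show that every $L' \in U \cap \Cr(F)$ lies in $\Cr(L)$. For the chain $L \to L'$, I pick $L^{**} \in W^s_{loc}(L') \cap W^u_{loc}(L)$: because $L^{**}$ sits very close to $L$ along the unstable direction, I graft it onto the end of a short $\epsilon$-chain $L \to L$ furnished by $L \in \Cr(L)$, then follow the true forward orbit of $L^{**}$ until it is within $\epsilon$ of the forward orbit of $L'$ (stable contraction), jump onto the orbit of $L'$, and finally use $L' \in \Cr(L')$ to close up at $L'$. A symmetric construction with $L^{*} \in W^s_{loc}(L) \cap W^u_{loc}(L')$ produces the reverse chain $L' \to L$. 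Consequently $\Cr(L) \supseteq U \cap \Cr(F)$, so $\Cr(L)$ is open in $\Cr(F)$; being also closed and pairwise disjoint, the chain recurrent classes form a clopen partition of the compact set $\Cr(F)$, hence they are finite in number.

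The chief obstacle will be the rigorous installation of this local product structure on $M/\mathcal{F}^c$ near leaves with non-trivial holonomy, where the quotient is only an orbifold and the descended structures $W^s$, $W^u$ may be singular along the exceptional strata. Passing through a local foliated bundle via Reeb stability, together with the equivariance of $\mathcal{F}^{cs}, \mathcal{F}^{cu}$ under the finite center holonomy group action on the transversal, should resolve this, but verifying that the intersection leaf can be chosen compatibly with the orbifold identifications requires care.
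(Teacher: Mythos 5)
Your proposal is correct and rests on the same key mechanism as the paper's proof: two chain-recurrent leaves that are sufficiently Hausdorff-close admit intersection leaves of their local center-stable/center-unstable saturates (via dynamical coherence), and these, together with chain recurrence of the two leaves, yield $\epsilon$-chains in both directions, so nearby chain-recurrent leaves lie in the same class. The paper packages this as a contradiction (infinitely many classes would force two distinct classes $\delta$-close by compactness), while you package it as the classes being clopen in the compact set $\Cr(F)$; these are equivalent, so this is essentially the paper's argument.
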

\begin{proof}[Proof of Lemma~\ref{lemma_chain_recurrent}]
Assume that there are infinitely many chain-recurrent classes. Then for every $\delta > 0$ there exist center leaves $L_1$ and $L_2$ such that $\Cr(L_1)\cap \Cr(L_2) = \emptyset$ and $L_2 \subset B_{H}(L_1,\delta)$. We choose $\delta > 0$ sufficiently small. Then the intersection of $\mathcal{F}^u_{loc}(L_1)$ with $\mathcal{F}^s_{loc}(L_2)$ and conversely of $\mathcal{F}^s_{loc}(L_1)$ with $\mathcal{F}^u_{loc}(L_2)$ contain at least one point, and we can construct for every $\epsilon > 0$ an $\epsilon$-chain which connect $\Cr(L_1)$ and $\Cr(L_2)$ and conversely.  
This implies the equality of both chain recurrent classes: $\Cr(L_1)=\Cr(L_2)$.  

\end{proof}
We denote the finite chain recurrent classes by $\Omega_1, \dots, \Omega_n$. We have $\bigcup_{i=1}^n\Omega_i~=~\Cr(F)$. For all $L \in \mathcal{F}^c$ define the compact sets $\omega(L)=\bigcap_{n \geq 0}\overline{\left(\bigcup_{k \geq n}f^k(L)\right)}$ and $\alpha(L)=\bigcap_{n \geq 0}\overline{\left(\bigcup_{k \geq n}f^{-k}(L)\right)}$, called \emph{$\omega$- and $\alpha$-limit sets}. 
First, we recall the following fact: 
\begin{claim}\label{c.limitset}
For all $L \in \mathcal{F}^c$ there exist $i,j \in \left\{1,\dots,n\right\}$ such that $\omega(L) \subset \Omega_i$ and $\alpha(L) \subset \Omega_j$. 
\end{claim}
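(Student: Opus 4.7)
The plan is to reduce the claim to two standard observations about $\omega$-limit sets in a compact metric system: (i) $\omega(L) \subset \mathrm{Cr}(F)$, and (ii) any two leaves of $\omega(L)$ belong to the same chain-recurrent class. Once both are proved, the finiteness statement of Lemma~\ref{lemma_chain_recurrent} forces $\omega(L)$ into a single $\Omega_i$. The analogous assertion for $\alpha(L)$ follows by applying the same argument to $F^{-1}$: reversing an $\epsilon$-chain for $F$ produces, via uniform continuity of $F^{-1}$, an $\epsilon'$-chain for $F^{-1}$, so the chain-recurrent set and its decomposition into classes coincide for $F$ and $F^{-1}$.

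The main technical tool is the uniform continuity of $F$ on the compact metric space $(M/\mathcal{F}^c,d_H)$. Given $\epsilon>0$, pick $\delta\in(0,\epsilon)$ with the property that $d_H(A,B)<\delta$ implies $d_H(f(A),f(B))<\epsilon$. Let $L_1,L_2\in\omega(L)$. Since both are accumulation points of the forward orbit of $L$, the sets $\{n\geq 0 : d_H(f^n(L),L_i)<\delta\}$ are infinite for $i=1,2$, so I can pick integers $n_1<n_2<n_3$ with
\[
d_H(f^{n_1}(L),L_1)<\delta,\qquad d_H(f^{n_2}(L),L_2)<\delta,\qquad d_H(f^{n_3}(L),L_1)<\delta.
\]
The sequence
\[
L_1,\; f^{n_1+1}(L),\; f^{n_1+2}(L),\;\ldots,\; f^{n_2-1}(L),\; L_2
\]
is then an $\epsilon$-chain from $L_1$ to $L_2$: the initial step has $d_H(f(L_1),f^{n_1+1}(L))=d_H(f(L_1),f(f^{n_1}(L)))<\epsilon$ by the choice of $\delta$, the intermediate steps have distance $0$, and the terminal step satisfies $d_H(f(f^{n_2-1}(L)),L_2)=d_H(f^{n_2}(L),L_2)<\delta<\epsilon$. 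The symmetric construction on the block $[n_2,n_3]$ produces an $\epsilon$-chain from $L_2$ back to $L_1$. Hence $L_1$ and $L_2$ lie in the same chain-recurrent class; specialising to $L_1=L_2$ yields $\omega(L)\subset\mathrm{Cr}(F)$.

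Combining these two facts, $\omega(L)$ is contained in a single chain-recurrent class, which by Lemma~\ref{lemma_chain_recurrent} is one of $\Omega_1,\dots,\Omega_n$. The $\alpha$-limit case is identical after substituting $F^{-1}$ for $F$ throughout. I do not foresee a serious obstacle here; the only subtlety is bookkeeping the endpoints of the chains so that the first and last steps genuinely use the uniform-continuity estimate rather than an uncontrolled application of $f$, which is exactly what the two-sided truncation $f^{n_1+1}(L),\ldots,f^{n_2-1}(L)$ accomplishes.
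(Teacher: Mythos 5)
Your proof is correct, and in fact it supplies an argument that the paper itself omits: Claim~\ref{c.limitset} is only ``recalled'' there as a standard fact from topological dynamics (Conley theory), with no proof given. Your route is exactly the standard one this citation stands for: uniform continuity of $F$ on the compact metric space $(M/\mathcal{F}^c,d_H)$ lets you splice genuine orbit segments between return times to build $\epsilon$-chains, showing $\omega(L)$ is chain transitive and hence lies in a single chain-recurrent class, which by Lemma~\ref{lemma_chain_recurrent} is one of the finitely many $\Omega_i$; passing to $F^{-1}$ (also uniformly continuous, with the same chain classes after reversing chains) handles $\alpha(L)$. The only bookkeeping point worth tightening is the degenerate case $n_2=n_1+1$, where your chain collapses to $L_1,L_2$ and the estimate gives $\epsilon+\delta$ rather than $\epsilon$; this is harmless, since the return-time sets are infinite so you may always take $n_2\geq n_1+2$ (or simply take $\delta\leq\epsilon/2$), but it should be said. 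With that one-line fix the argument is complete and fills in the justification the paper leaves implicit.
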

The finite number of chain recurrent classes $\Omega_1, \dots, \Omega_n$ comprises the chain recurrent set $\Cr(F)$. There exists a natural way to order the chain recurrent classes: We say that $\Omega_i \leq \Omega_j$ if for every $\epsilon > 0$ there exists an $\epsilon$-pseudo orbit $\left\{L_n\right\}_{n \in \mathbb{Z}}$ of center leaves such that $d_H(L_n,\Omega_i) \rightarrow 0$ for $n \rightarrow -\infty$ and $d_H(L_n, \Omega_j) \rightarrow 0$ for $n \rightarrow \infty$.  
This order on the set of chain recurrent classes together with the finiteness of chain recurrent classes implies directly the following corollary, due to Conley theory (see \cite{C78}): 
\begin{corol}
There exists a chain-recurrent class $R \subset \Cr(F)$ which is a repeller, i.e. there exists a $\mathcal{F}^c$-saturated neighborhood $U \supset R$ such that $f^{-1}(\overline{U}) \subset U$ and $\bigcap_{n \geq 0}f^{-n}U=R$. 
\end{corol}
We recall that we proved in \cite{BB12} the dynamical coherence of $f$ and further the so-called completeness, that is, every center stable leaf $L^{cs}$ of the center stable foliation $\mathcal{F}^{cs}$ tangent to $E^c \oplus E^s$ is equal to the union of stable leaves through one center leaf $L \subset L^{cs}$, what we denote by the stable leaf $\mathcal{F}^s(L)$ of the center leaf $L$. 
With this notation, we state the following property which plays a crucial role in the proof of transitivity: 
\begin{lemma}\label{l.stablemanifold}
$$\bigcup_{L \in \Cr(F)}\mathcal{F}^s(L)=M.$$
\end{lemma}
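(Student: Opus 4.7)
The plan is to construct, for an arbitrary center leaf $L_0$, a chain recurrent leaf $L^{*}$ with $L_0\in\mathcal{F}^s(L^{*})$. The recipe is to push $L_0$ forward until it is Hausdorff-close to a leaf of the chain recurrent class attracting it, bracket stable and unstable there, and then pull back; the resulting leaf turns out to be chain recurrent because one can force its $\omega$- and $\alpha$-limit sets into a single class.

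By Claim~\ref{c.limitset}, fix the chain recurrent class $\Omega_i$ with $\omega(L_0)\subset\Omega_i$, choose any $L'\in\omega(L_0)$, and pick $n$ large enough that $d_H(f^{n}(L_0),L')$ is within the size of the local product boxes of $f$. Exactly as in the proof of Lemma~\ref{lemma_chain_recurrent}, dynamical coherence together with the completeness of $\mathcal{F}^{cs}$ and $\mathcal{F}^{cu}$ established in \cite{BB12} produces a single (local) center leaf
$$M_n \;:=\; \mathcal{F}^{u}_{loc}(L')\cap\mathcal{F}^{s}_{loc}(f^{n}(L_0)).$$
Define $L^{*}:=f^{-n}(M_n)$ and $L^{**}:=f^{-n}(L')$. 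Then $L^{*}\in\mathcal{F}^u(L^{**})\cap\mathcal{F}^s(L_0)$, and since $\omega(L_0)$ is $f$-invariant, $L^{**}\in\Omega_i$.

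It remains to show $L^{*}\in\Omega_i$. From $L^{*}\in\mathcal{F}^s(L_0)$: the stable projection from $L^{*}$ to $L_0$ is defined everywhere on the compact leaf $L^{*}$, so the pointwise stable distances are uniformly bounded, and uniform contraction of $E^s$ gives $d_H(f^m(L^{*}),f^m(L_0))\to 0$ as $m\to+\infty$; hence $\omega(L^{*})=\omega(L_0)\subset\Omega_i$. Symmetrically, from $L^{*}\in\mathcal{F}^u(L^{**})$ and backward contraction of $E^u$, $\alpha(L^{*})=\alpha(L^{**})\subset\Omega_i$. Given any $\epsilon>0$, concatenating the forward orbit of $L^{*}$ (which enters any $\epsilon$-neighborhood of $\Omega_i$), an $\epsilon$-chain through the chain transitive class $\Omega_i$, and the backward orbit of $L^{*}$ (which also enters any $\epsilon$-neighborhood of $\Omega_i$) produces $\epsilon$-chains from $L^{*}$ to itself and, in both directions, between $L^{*}$ and leaves of $\Omega_i$. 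Hence $L^{*}$ is chain recurrent in the same class as $L^{**}$, i.e.\ $L^{*}\in\Omega_i\subset\Cr(F)$, and $L_0\in\mathcal{F}^s(L^{*})$ proves the lemma.

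The only subtle step is the passage from pointwise stable or unstable asymptoticity (the literal content of belonging to $\mathcal{F}^s$ or $\mathcal{F}^u$) to Hausdorff-asymptoticity of compact center leaves in the quotient. This is not a real obstacle: the stable (resp.\ unstable) projection between two leaves that share a center stable (resp.\ center unstable) leaf is continuous and defined on the whole of the compact leaf, so projection distances are uniformly bounded and uniform hyperbolicity supplies the required uniform exponential decay.
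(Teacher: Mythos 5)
Your proof is correct and takes essentially the same route as the paper: push $L_0$ forward until it is close to the class $\Omega_i$ containing $\omega(L_0)$, pick a center leaf in $\mathcal{F}^u_{loc}(L')\cap\mathcal{F}^s_{loc}(f^n L_0)$, and place it in $\Omega_i$ by trapping its $\alpha$- and $\omega$-limit sets there and concatenating chains, which is precisely the content of the paper's Claim~\ref{c.stablemanifold}. The only differences are cosmetic (you pull back by $f^{-n}$ before arguing, the paper argues at time $n$ and uses $f$-invariance of $\Omega_i$), and your closing remark on passing from leafwise asymptoticity to Hausdorff asymptoticity just makes explicit a step the paper leaves implicit.
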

\begin{proof}[Proof of Lemma~\ref{l.stablemanifold}]
Let $L \in \mathcal{F}^c$ be an arbitrary center leaf. Claim~\ref{c.limitset} implies that there exists a chain recurrent class $\Omega_i$ such that $\omega(L)\subset \Omega_i$. Consequently, there exist $n > 0$ and $L_n \subset\Omega_i$ such that $f^nL$ and $L_n$ are sufficiently close such that there is a center leaf $L'_n \subset \mathcal{F}^u_{loc}(L_n) \cap \mathcal{F}^s_{loc}(f^nL)$ contained in the intersection of the center stable with the center unstable leaf. We prove that $L'_n\subset \Omega_i$: 
\begin{claim}
With the notations above we have $L'_n \subset \Omega_i$. 
\label{c.stablemanifold}
\end{claim}  
\begin{proof}[Proof of Claim~\ref{c.stablemanifold}]
We have $\alpha(L'_n) = \alpha(L_n)$ because $L'_n \subset \mathcal{F}^u_{loc}(L_n)$, and therefore $\alpha(L'_n) \subset \Omega_i$. On the other hand, $L'_n \subset \mathcal{F}^s_{loc}(f^nL)$, so $\omega(L'_n)=\omega(L) \subset \Omega_i$. As $L'_n$ is chain recurrent to $\alpha(L'_n)$ and $\omega(L'_n)$, this implies that $L'_n \subset \Omega_i$. 
\end{proof}
This implies directly that $f^nL \subset \mathcal{F}^s(L'_n)$ with $L'_n \subset \Omega_i$. Hence, $L \subset \mathcal{F}^s(\Omega_i)$. 
\end{proof}
Because every center stable leaf $L^{cs} \in \mathcal{F}^{cs}$ is by the completeness of the foliations equal to the stable foliation $\mathcal{F}^s(L)$ through any center leaf $L \subset L^{cs}$ we get the following consequence of the previous Lemma: 
\begin{corol}\label{corol_intersection}
For every center stable leaf $L^{cs}$ there exists $i\in \left\{1,\dots,n\right\}$ such that $L^{cs}\cap \Omega_i \neq \emptyset$. 
\end{corol}
With the help of the previous results we can now prove the transitivity of $F$ under the additional hypothesis of a one-dimensional unstable direction. 

\section{Central transitivity}\label{s.transitive}

We proceed with the proof of the central transitivity, the first step in order to prove Theorem~\ref{maintheorem}. \\
The central transitivity of $f$ is equivalent to the transitivity of the induced homeomorphism $F$ on the leaf space which is a compact metric space with induced (probably) singular stable and unstable foliations. Recall that Newhouse in \cite{N70} proved that any codimension one Anosov diffeomorphism is transitive. We can not adapt his proof to our setting as it essentially requires differentiability and the existence of non-singular foliations which provide a local product structure. Nevertheless, we can use the main idea of a simplified proof by Hiraide in \cite{H01} of Newhouse's theorem. The adaptation of this idea is not straightforward as points are substituted by compact center manifolds with probably non-trivial holonomy. So we do not have a local product structure. In fact, a local stable leaf may intersect a local unstable leaf several times (cf. example in \cite{BoW05} and the discussion of its pathological behavior in \cite{BB12}). The main ingredient to accomplish the proof is the Shadowing Lemma proved in \cite{BB12} and the decomposition of the chain recurrent set (see Section~\ref{s.chains} above) in the setting of a uniformly compact center foliation. \\
This section is dedicated to the proof of the following theorem:
\begin{thm}[Central Transitivity]
Let $f: M \rightarrow M$ be a partially hyperbolic $C^1$-diffeomorphism with a uniformly compact $f$-invariant center foliation. Suppose that $\dim E^u =1$. Then $f$ is centrally transitive.
\label{thm_transitive}
\end{thm}
\begin{rem}\label{r.oriented}
First, we observe that Theorem \ref{thm_transitive} proved under the additional assumption of an orientable unstable bundle $E^u$ implies Theorem \ref{thm_transitive}. If the unstable bundle is non-orientable, the whole system $f$ can be lifted to a partially hyperbolic system $\tilde{f}$ on the $2$-cover $\tilde{M}$ of orientation of $E^u$ and $\tilde{f}$ fulfills all the assumptions of Theorem \ref{thm_transitive} plus orientability of $E^u$, hence, the diffeomorphism $\tilde{f}$ is centrally transitive and consequently, $f$ is centrally transitive.  
\end{rem}
\begin{rem}
In the following we assume that the codimension of the center foliation is greater or equal than three. The case of a \textbf{codimension 2 center foliation} can be treated in a far easier way without using central transitivity as we explained above in Subsection~\ref{s.codimension} and Subsection~\ref{proof_codimension2} respectively.
\end{rem}
From now on we suppose that $E^u$ is one-dimensional and oriented. As a consequence, the center-stable foliation $\mathcal{F}^{cs}$ has trivial holonomy. The proof of Theorem~\ref{thm_transitive} divides into the following two steps: 
\begin{enumerate}
\item We prove that $f$ is centrally chain transitive, i.e. that $F$ on the leaf space is chain transitive (Proposition~\ref{proposition_1}). 
\item Any partially partially hyperbolic diffeomorphism $f$ with an $f$-invariant uniformly compact center foliation which is centrally chain transitive is in fact centrally transitive. This is a direct implication of the Shadowing Lemma on the leaf space, proved in \cite{BB12}. 
\end{enumerate}
So, for Theorem~\ref{thm_transitive} it suffices to prove the following proposition: 
\begin{prop}
Under the assumptions of Theorem \ref{thm_transitive}, the diffeomorphism $f$ is centrally chain transitive, i.e. $F: M/\mathcal{F}^c \rightarrow M/\mathcal{F}^c$ is chain transitive. 
\label{proposition_1}
\end{prop}

\subsection{Proof of Proposition \ref{proposition_1}.} The notations used in this subsection are introduced in Section~\ref{s.chains} above. 
We consider a repeller $R \subset M$ and a $\mathcal{F}^c$-saturated neighborhood $U \supset \Omega$ such that $U \cap \Omega_j = \emptyset$ for any other chain-recurrent class $\Omega_j$. For all center leaves $L \in R$ it is $\mathcal{F}^{s}(L) \subset R$. 
For the proof of Proposition~\ref{proposition_1} we show the following Lemma: 
\begin{lemma}[Principal Lemma]
Assume there exists a repeller $R \neq M$. For any open $\mathcal{F}^c$-saturated neighborhood $U \supset R$ such that $U \cap \Cr(F)=R$ there exists a center leaf $L \subset U\setminus R$ such that $\mathcal{F}^{s}(L) \subset U$. 
\label{lemma_principal}
\end{lemma}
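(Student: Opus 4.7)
The plan is to construct the required leaf $L$ by a small transverse perturbation of a leaf of $R$ in the oriented unstable direction. Fix a center leaf $L_0 \subset R$ and a point $x \in L_0$. Since $E^u$ is one-dimensional and oriented, the center-stable foliation $\mathcal{F}^{cs}$ has trivial holonomy (Lemma~\ref{lemma_1dim} and the remark thereafter), so there is a short embedded unstable arc $\gamma \colon [0,\epsilon) \to M$ with $\gamma(0)=x$, transverse to $\mathcal{F}^{cs}$ and following the positive orientation of $E^u$. For each $t$ write $L_t$ for the center leaf through $\gamma(t)$ and $L^{cs}_t := \mathcal{F}^s(L_t)$ for its center-stable leaf. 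For small $t>0$ the leaf $L_t$ lies in $U$ by openness and continuity, and $L_t \not\subset R$ because the forward orbit of $\gamma(t)$ separates exponentially from that of $x$ along the expanding unstable direction and must therefore leave $U$, whereas orbits of leaves in $R$ remain in $U$ for all times.

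The heart of the proof is to show that $L^{cs}_t \subset U$ for some small $t>0$. My approach would exploit three ingredients: (i) the backward invariance $f^{-1}(\overline U)\subset U$, which makes the property ``$L^{cs}\subset U$'' preserved under $f^{-1}$, since $f^{-1}(L^{cs}_t)$ is another center-stable leaf sitting inside $f^{-1}(U)\subset U$; (ii) the uniform contraction of $f^{-1}$ along $E^u$, which pushes the backward iterates $f^{-n}\gamma(t)$ into arbitrarily small neighborhoods of the backward orbit of $L_0$; and (iii) continuous dependence of $\mathcal{F}^{cs}$ on the base center leaf, combined with the uniformly bounded volume of compact center leaves (uniform compactness of $\mathcal{F}^c$). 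The aim is to argue that $f^{-n}(L^{cs}_t)$ clusters on leaves of $\mathcal{F}^s(R) = R$ strongly enough to enforce $f^{-n}(L^{cs}_t) \subset U$ for large $n$, which, together with the $f^{-1}$-invariance of the containment property, transports backwards to give the desired $L^{cs}_t \subset U$.

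The hard part will be precisely this global control of the (typically non-compact) immersed submanifold $L^{cs}_t$: local continuity of $\mathcal{F}^{cs}$ near $L_0$ only gives closeness on compact pieces, whereas we need control on the entire leaf, including its behavior at infinity. This is where the orientability of $E^u$ becomes essential, preventing the leaf from looping back and exiting $U$ across $\partial U$ in a direction that the local perturbation near $L_0$ cannot detect. I expect the argument to rely crucially on the Shadowing Lemma for orbits of center leaves from~\cite{BB12}, which lets one patch local foliation-chart information into a global statement by shadowing pseudo-orbits that remain in $U$, thereby ruling out the existence of ``exit points'' of $L^{cs}_t$ outside $U$ for sufficiently small $t$.
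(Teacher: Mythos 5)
Your proposal has two genuine gaps, one at the start and one at the decisive step. First, the choice of the leaf $L_t$: picking an arbitrary $x\in L_0\subset R$ and pushing in the positive unstable direction does not guarantee $L_t\not\subset R$, since $R$ may locally contain the whole unstable segment above $x$; and your justification (exponential separation of forward orbits forces $\gamma(t)$ to leave $U$) is a non sequitur --- separating from the orbit of $x$ does not by itself force exit from $U$, and exiting $U$ forward is exactly equivalent to $L_t\not\subset R$, so nothing has been shown. The paper avoids this by covering $R$ with finitely many product charts $U_i\times[-1,1]$ (Lemma~\ref{l.cover}) and selecting \emph{maximal} intersection points $m_i\in R$ with respect to the orientation of $E^u$, so that points just above $m_1$ are automatically outside $R$.

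Second, and more seriously, the mechanism you propose for the containment $\mathcal{F}^s(L_t)\subset U$ is logically inverted. The property preserved by $f^{-1}$ is: if $Y\subset U$ then $f^{-1}(Y)\subset U$. So even if you could show $f^{-n}(L^{cs}_t)\subset U$ for large $n$, this only propagates to \emph{further backward} iterates; to recover $L^{cs}_t$ itself you would need $f^{n}(U)\subset U$, i.e. forward invariance of $U$, which fails precisely because $R$ is a repeller. Equivalently, writing $\mathcal{F}^s(L_t)=f^{n}\bigl(\mathcal{F}^s(f^{-n}L_t)\bigr)$ shows that backward contraction makes the problem harder, not easier. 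The remaining appeal to the Shadowing Lemma to ``rule out exit points'' of the non-compact leaf is a hope, not an argument: the actual global control is the entire content of the paper's proof, which proceeds by building a compact set $\Delta\subset L^{cs}(m_1)$ with path-connected complement (Lemma~\ref{l.delta}, via Proposition~\ref{p.connected} and the maximal-holonomy leaf of Lemma~\ref{l.maximal2}), attaching to each $x\in L^{cs}(m_1)\setminus\Delta$ the gap interval $I_x\subset L^u_+(x)$ disjoint from $R$ (Lemma~\ref{l.emptyintersection}), transporting these intervals by center-stable holonomy along paths in the connected complement, using trivial holonomy of $\mathcal{F}^{cs}$ to make this transport path-independent (Lemmas~\ref{l.help} and~\ref{l.help2}), and finally assembling the whole leaf $L^{cs}(y)$ through a point $y$ just above $m_1$ from these pieces to conclude $L^{cs}(y)\subset U$ (Lemma~\ref{l.help3}). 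None of this structure is present in your sketch, so the key step remains unproved.
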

This Lemma together with the previous results about the chain recurrent classes in Section~\ref{s.chains} implies Proposition~\ref{proposition_1} directly:
\begin{proof}[Proof of Proposition~\ref{proposition_1}]
Lemma~\ref{lemma_principal} implies that there exists $L \subset U$ such that $\mathcal{F}^{s}(L) \subset U$. As $\bigcup_{L' \in \Cr(F)} \mathcal{F}^{s}(L') =M$ from Corollary~\ref{corol_intersection} the intersection $\mathcal{F}^{s}(L) \cap \Cr(F) \neq \emptyset$. Hence, there exists $j \in \left\{1,\dots,k\right\}$ and $L' \subset \Omega_j$ such that $\mathcal{F}^{s}(L)=\mathcal{F}^{s}(L')$. This implies that $L' \subset U \cap \Omega_{j}$ contradicting the assumption that $U \cap \Cr(F)=R$. So we get $R=M$. \end{proof}
We prove now the Principal Lemma \ref{lemma_principal} in several smaller steps: 
\begin{lemma}\label{l.cover} Let $f: M \rightarrow M$ be a partially hyperbolic $C^1$-diffeomorphism with an $f$-invariant uniformly compact center foliation $\mathcal{F}^c$. Assume $\dim E^u=1$ and $E^u$ is oriented. Further, let $R \neq M$ be an $\mathcal{F}^c$-saturated repeller with respect to the dynamics $F$ of the center foliation. Then there exist finitely many $\mathcal{F}^c$-saturated neighborhoods $V_i \subset U$ with $\bigcup \interior{V_i} \supset R$ such that 
\begin{itemize}
\item there is a homeomorphism $\phi_i: V_i \rightarrow U_i \times [-1,1]$ where
\item  $U_i$ is a tubular neighborhood of $L_i \in \mathcal{F}^c$ inside $\mathcal{F}^s(L_i)$ and $p_i: U_i \rightarrow L_i$ fibers over $L_i$ and
\item $\phi^{-1}_i(\left\{x\right\} \times (-1,1))$ coincides with a unstable segment of the unstable foliation.
\end{itemize}
Further, every border plaque homeomorphic to $U_i \times \left\{\pm 1\right\}$ is either contained in the interior of $R$ or it lies completely outside of $R$. In particular, there exists $i$ such that the plaque $U_i \times \left\{\alpha\right\}$, $\alpha \in \left\{-1,1\right\}$ is disjoint from $R$.   
\end{lemma}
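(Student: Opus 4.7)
My plan is to build the $V_i$ using the local product structure of $\mathcal{F}^{cs}$ with $\mathcal{F}^u$ and then derive the plaque dichotomy from the inclusion $\mathcal{F}^s(L)\subset R$ for $L\in R$ noted just before Lemma~\ref{lemma_principal}. Fix $L\in R$ and apply the Generalized Reeb Stability Theorem to $L$ regarded as a compact leaf of the restricted foliation $\mathcal{F}^c|_{\mathcal{F}^s(L)}$ inside the center-stable leaf $\mathcal{F}^s(L)$; this provides a saturated tubular neighbourhood $U_L$ of $L$ in $\mathcal{F}^s(L)$ together with a fibration $p_L:U_L\to L$ having finite structure group $\Hol(L)$, so in particular $U_L$ is $\mathcal{F}^c$-saturated. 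Because $E^u$ is one-dimensional and oriented, Lemma~\ref{lemma_1dim} forces $\mathcal{F}^{cs}$ to have trivial transverse holonomy, and the local product structure between $\mathcal{F}^{cs}$ and $\mathcal{F}^u$ upgrades $U_L$ to a homeomorphism $\phi_L:V_L\to U_L\times[-1,1]$, where $V_L$ is a uniform unstable thickening of $U_L$ and $\phi_L^{-1}(\{x\}\times(-1,1))$ is an unstable arc through each $x\in U_L$. The triviality of the $\mathcal{F}^{cs}$-holonomy along unstable arcs ensures that every slice $\phi_L^{-1}(U_L\times\{t\})$ is an open subset of a single center-stable leaf, obtained from $\mathcal{F}^s(L)$ by unstable holonomy, and is $\mathcal{F}^c$-saturated since this holonomy intertwines the respective restrictions of $\mathcal{F}^c$.

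Since $R$ is compact and $R\subset\bigcup_{L\in R}V_L$, one extracts a finite subcover $V_1,\ldots,V_n$ with $\bigcup_i\interior V_i\supset R$, and by shrinking each box uniformly in the $\mathcal{F}^{cs}$-direction one arranges $V_i\subset U$. The plaque dichotomy is then immediate: a border plaque $P_i^+:=\phi_i^{-1}(U_i\times\{1\})$ is a connected, $\mathcal{F}^c$-saturated, open subset of a single center-stable leaf $\mathcal{F}^s(L_i^+)$, and if some center leaf $L^{\ast}\subset P_i^+$ lies in $R$, then $\mathcal{F}^s(L^{\ast})\subset R$ yields $P_i^+\subset\mathcal{F}^s(L^+_i)=\mathcal{F}^s(L^{\ast})\subset R$; thus $P_i^+$ is contained in an open $\mathcal{F}^{cs}$-saturated subset of $R$, which is the sense of \emph{interior} in the statement. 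Otherwise $P_i^+\cap R=\emptyset$, and the same reasoning applies to $P_i^-$.

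To produce at least one border plaque disjoint from $R$, observe that $R$ is closed, $\mathcal{F}^c$-saturated, and $\mathcal{F}^{cs}$-saturated via the stable-saturation $\mathcal{F}^s(L)\subset R$. If $R$ were in addition saturated under unstable translations of the scale permitted by the $V_i$-construction, $R$ would be open in $M$; being also closed, it would equal $M$ by connectedness, contradicting $R\neq M$. Consequently there exists $L\in R$ and $\alpha\in\{-1,+1\}$ such that the unstable push of $L$ by parameter $\alpha$ has left $R$; centering $V_i$ at such an $L$ and calibrating the $[-1,1]$-parameter accordingly makes the corresponding border plaque $P_i^\alpha$ meet $M\setminus R$, so by the dichotomy it is disjoint from $R$. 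The main technical obstacle is the $\mathcal{F}^c$-saturation of the unstable translates $\phi_i^{-1}(U_i\times\{t\})$: it rests on dynamical coherence combined with triviality of $\mathcal{F}^{cs}$-holonomy afforded by orientability of $E^u$, and it is what makes the whole slice-wise analysis valid.
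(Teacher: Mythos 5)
Your proposal is correct and follows essentially the same route as the paper: Reeb stability applied to $\mathcal{F}^c$ inside the center-stable leaves, an unstable thickening made coherent by the triviality of the unstable-direction center holonomy coming from Lemma~\ref{lemma_1dim}, the plaque dichotomy deduced from the $\mathcal{F}^{cs}$-saturation of the repeller, and a recalibration of the unstable parameter to produce a border plaque disjoint from $R$. The only cosmetic differences are that the paper obtains the unstable thickening from a second application of the Reeb Stability Theorem inside $\mathcal{F}^{cu}$ rather than directly from the transverse product structure, and that your open-closed connectedness argument spells out what the paper justifies only tersely by ``changing a bit the parametrization'' in the final step.
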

\begin{proof}[Proof of Lemma~\ref{l.cover}]
Under the assumptions above we showed that $f$ is dynamically coherent, hence, the center stable foliation is subfoliated by the stable and center foliation. Therefore, we can apply the Reeb Stability Theorem to the uniformly compact foliation $\mathcal{F}^c$ inside the center stable foliation $\mathcal{F}^{cs}$. This together with the compactness of the repeller $R$ gives us directly the existence of finitely many open $\mathcal{F}^c$-saturated sets $W_j \subset U$ inside the center stable foliation $\mathcal{F}^{cs}$ through $R$ such that every set $W_j$ is a tubular neighborhood of a center leaf $L_j$ and $p_j: W_j \rightarrow L_j$ is a fiber bundle where every fiber is a stable disk inside $\mathcal{F}^s(L_j)\cap W_j$.\\
The unstable foliation $\mathcal{F}^u$ is one-dimensional, oriented and transverse to the center stable foliation $\mathcal{F}^{cs}$. The center unstable foliation is thanks to the dynamical coherence subfoliated by the unstable and center foliation. By applying the Reeb Stability Theorem to the uniformly compact center foliation inside the center unstable foliation we get $\mathcal{F}^c$-saturated tubular neighborhoods $Y_k$ such that $Y_k$ is homeomorphic to $L_k \times (-1,1)$ where $L_k$ is a center leaf inside $Y_k$. Every $Y_k$ is trivially foliated as a product because the holonomy along center leaves is trivial. We fit both sets of $\mathcal{F}^c$-tubular neighborhoods $(W_j)$ and $(Y_k)$ together by centering them in the same center leaf $L_k$ of $Y_k$ and by shrinking them if necessary such that we obtain some compact $\mathcal{F}^c$-saturated sets $V_i$ which are homeomorphic to $U_i \times [-1,1]$ where every $U_i$ has the same properties as $W_j$ above. This gives the first three items of the Lemma.    \\
The last item is a direct consequence of the fact that $R$ is a repeller and $\mathcal{F}^{cs}$-saturated. Therefore any tubular neighborhood $U_i$ inside a center stable leaf lies either completely inside or outside of $R$.  \\
We have now a cover with compact sets of $R$ inside the open neighborhood $U$ of $R$. By changing a bit the parametrization of the unstable foliation of the compact sets $V_i$ we could assure that there exists one $V_i$ such that either $U_i \times \left\{1\right\}$ or $U_i \times \left\{-1\right\}$ lies completely outside $R$.  
 
\end{proof}
\begin{rem}\begin{enumerate}
\item For simplicity, we denote the $\mathcal{F}^c$-saturated stable plaque through $x \in U$ by $U_x$ abbreviating $\phi_i^{-1}(U_i \times \left\{t\right\})$. We also identify $V_i$ with its homeomorphic image $U_i \times [-1,1]$. 
\item As every unstable leaf of $\mathcal{F}^u$ is homeomorphic to $\mathbb{R}$ we have a natural order on every unstable segment. 
\item By changing the orientation of $\mathcal{F}^u$ we can assume that there exists $i$ such that $U_i \times \left\{1\right\}$ lies outside $R$. 
\item By $D^s(x,\gamma)$ we denote a stable disk (inside a stable leaf) of diameter $\gamma$ centered at $x$. 
\end{enumerate}
\end{rem}  
From now on we fix a cover $(V_i)$ of $R$ with the properties given by Lemma~\ref{l.cover}.
In every neighborhood $U_i \times\left[-1,1\right]$ if possible we choose points $m_i \in R$ which are the maximal points of intersection with respect to the orientation of $E^u$, i.e. the set $L^u_{+}(m_i)=\left\{z \in L^u(m_i)\;\big|\; z > m_i\right\}$ does not intersect $R$.
\begin{lemma}\label{l.delta}
There exists a compact set $\Delta \subset L^{cs}(m_1)$ such that 
\begin{enumerate}
\item $\Delta$ contains all plaques $U_{m_j}$ of points $m_j$ where $m_j \in L^{cs}(m_1)$ and
\item $L^{cs}(m_1) \setminus \Delta$ is path-connected.
\end{enumerate}
\end{lemma}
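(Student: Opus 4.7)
My strategy is to take $\Delta$ as a sufficiently large compact tube around the compact center leaf $L^c(m_1)$ inside the center stable leaf $L^{cs}(m_1)$, and to verify that its complement is path-connected because $L^{cs}(m_1)$ has only one end under the codimension assumptions in force.

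First I would observe that the cover $(V_i)$ produced by Lemma~\ref{l.cover} is finite, hence the set of indices $j$ for which $m_j \in L^{cs}(m_1)$ is finite; the corresponding $\mathcal{F}^c$-saturated stable plaques $U_{m_j}$ therefore form a finite collection of compact subsets of $L^{cs}(m_1)$, and their union $A := \bigcup_{m_j \in L^{cs}(m_1)} U_{m_j}$ is compact.

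Next I would use the structure of $L^{cs}(m_1)$. By the completeness of the center stable foliation proved in \cite{BB12}, $L^{cs}(m_1) = \bigcup_{z \in L^c(m_1)} L^s(z)$, so $L^{cs}(m_1)$ is obtained by saturating the compact leaf $L^c(m_1)$ by stable manifolds, each homeomorphic to $\mathbb{R}^s$ with $s = \dim E^s$. In the case under consideration, $\codim \mathcal{F}^c \geq 3$ and $\dim E^u = 1$, so $s \geq 2$. Continuity of $E^s$ together with compactness of $L^c(m_1)$ endows $L^{cs}(m_1)$ with the structure of a (possibly twisted) bundle of $s$-dimensional contractible fibres over the compact base $L^c(m_1)$. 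Fix any auxiliary Riemannian metric on $M$, restrict it to $L^{cs}(m_1)$, and set $\Delta_R := \{y \in L^{cs}(m_1) : d(y, L^c(m_1)) \leq R\}$; for $R$ large enough, $\Delta_R$ is compact and contains $A$.

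The remaining task is to show that $L^{cs}(m_1) \setminus \Delta_R$ is path-connected. Since each fibre $\mathbb{R}^s$ with $s \geq 2$ has connected complement of any ball, the intersection of the tube's complement with each fibre is connected; these fibrewise annular pieces glue together along the connected base through any finite foliated atlas, yielding a connected open subset of the manifold $L^{cs}(m_1)$ which is therefore path-connected. Setting $\Delta := \Delta_R$ for a sufficiently large $R$ establishes the lemma. The main obstacle is verifying connectedness of the tube complement in the presence of the possibly non-trivial center holonomy inside $L^{cs}(m_1)$, which may twist its fibre-bundle structure; I would handle this by choosing a finite foliated atlas of $L^{cs}(m_1) \to L^c(m_1)$ in which the product structure is explicit, extracting local path-connectedness of the complement within each chart, and concatenating the local paths using the connectedness of the base $L^c(m_1)$ together with the freedom to route curves around the compact tube provided by the dimension condition $s \geq 2$.
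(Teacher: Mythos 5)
Your overall strategy (take a large compact tube and argue its complement is connected because the stable directions have dimension $s\geq 2$) is the right intuition, but as written it has two genuine gaps, and both are exactly the points the paper spends its effort on. First, you assume that $L^{cs}(m_1)$ fibres over the compact leaf $L^c(m_1)$ with contractible fibres given by the stable leaves. At this stage of the argument the center holonomy is \emph{not} known to be trivial (triviality is proved later, using the transitivity that this lemma serves to establish), and completeness only gives $L^{cs}(m_1)=\bigcup_{z\in L^c(m_1)}L^s(z)$: a single stable leaf may intersect $L^c(m_1)$ in several points, so there is no well-defined projection $L^{cs}(m_1)\to L^c(m_1)$ along stable leaves, and the structure is not a ``possibly twisted bundle'' over $L^c(m_1)$ that a foliated atlas could straighten. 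The paper resolves this by not working with $L^c(m_1)$ at all: by Lemma~\ref{l.maximal2} the center leaf $L_0\subset L^{cs}(m_1)$ with \emph{maximal} holonomy meets every stable leaf exactly once, and it is the stable tube around $L_0$ (for which $\mathcal{F}^s_\gamma(L_0)\cong D^s(\gamma)\times L_0$, Lemma~\ref{l.gamma}) that has connected complement.

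Second, even granting some bundle structure, your fibrewise step fails: the set $\Delta_R=\{y: d(y,L^c(m_1))\leq R\}$ (distance measured inside the leaf $L^{cs}(m_1)$) intersects a stable leaf in a compact set that is in general \emph{not} a metric ball of that leaf --- the stable leaf can return close to $L^c(m_1)$ in the leaf metric while being intrinsically far away --- so ``the complement of a ball in $\mathbb{R}^s$ is connected'' does not apply fibrewise, and the complement of an arbitrary large compact set in $\mathbb{R}^s$ can have bounded components. The paper avoids having to control the shape of a large tube by a dynamical trick (Proposition~\ref{p.connected}): given the compact set $K$ (here the finite union of plaques $U_{m_j}$, which is indeed the easy part, as you note), it pushes forward by $f^n$ until $f^nK$ is contained in the uniform tube $\mathcal{F}^s_\gamma(f^nL_0)$, whose complement is connected by the product structure, and then sets $\Delta:=f^{-n}\mathcal{F}^s_\gamma(f^nL_0)$, using that $f^{-n}$ is a homeomorphism of center stable leaves. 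To repair your proof you would need both ingredients: replace $L^c(m_1)$ by the maximal-holonomy leaf and replace the large metric tube by a pulled-back uniform stable tube (or otherwise prove connectedness of the complement of an arbitrary compact set, which is essentially Proposition~\ref{p.connected} itself).
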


To find a compact set with the first property of Lemma~\ref{l.delta} is an easy task, so the main problem is to find a compact set whose complement is path connected.  
Consequently, the proof of Lemma~\ref{l.delta} relies on the following proposition: 
\begin{prop}\label{p.connected}
For every center stable leaf $L^{cs} \in \mathcal{F}^{cs}$ and for every compact set $K \subset L^{cs}$ there exists a compact set $N \subset L^{cs}$ such that $K \subset N$ and $L^{cs}\setminus N$ is connected. 
\end{prop}
Proposition~\ref{p.connected} is proved by the following three Lemmata which relie on the following fact: 
the center foliation $\mathcal{F}^c$ is a uniformly compact foliation and the manifold $M$ is compact, hence, there exists a maximal cardinality of the holonomy groups of the center leaves. Consequently, there exists for every center stable leaf $L^{cs} \in \mathcal{F}^{cs}$ a center leaf $L_0 \subset L^{cs}$ with maximal holonomy inside $L^{cs}$, this means, for every center leaf $L_1 \subset L^{cs}$ it holds that $|\Hol(L_1)| \leq |\Hol(L_0)|$. 
These considerations allow us to prove the following Lemma:
\begin{lemma}\label{l.maximal2}
Let $L^{cs} \in \mathcal{F}^{cs}$ be a center stable leaf and let $L_0 \in L^{cs}$ be a center leaf such that $|\Hol(L_1)| \leq |\Hol(L_0)|$ for every center leaf $L_1 \subset L^{cs}$. Then $L_0$ intersects every stable leaf $L^s \subset L^{cs}$ exactly once. 
\end{lemma}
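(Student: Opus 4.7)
I would prove the lemma by contradiction, assuming $L_0\cap L^s$ contains at least two points $x_1,x_2$. Note that this intersection is finite, because $E^c$ and $E^s$ are transverse inside $E^{cs}$ and $L_0$ is compact.

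The first step is to apply the Generalized Reeb Stability Theorem to the uniformly compact foliation $\mathcal{F}^c|_{L^{cs}}$ at $L_0$, obtaining a saturated tubular neighborhood $V\subset L^{cs}$ of $L_0$ with a fiber-bundle projection $p\colon V\to L_0$ whose fibers are small stable disks and whose structure group is $\Hol(L_0)$. Note that the holonomy of $L_0$ as a leaf of $\mathcal{F}^c|_{L^{cs}}$ coincides with the full holonomy $\Hol(L_0)$ in $M$, because under the standing hypothesis that $E^u$ is one-dimensional and oriented, the center-stable foliation $\mathcal{F}^{cs}$ has trivial holonomy. In this local model, a nearby center leaf $L'\subset V$ is a $k$-sheeted cover of $L_0$ with $k=|\Hol(L_0)|/|\Hol(L')|$ and meets each fiber of $p$ in exactly $k$ points.

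The second step is to produce an element of the transverse pseudogroup at $x_1$ not contained in $\Hol(L_0)$. Choose a path $\gamma\subset L_0$ from $x_1$ to $x_2$; since $L^s\cong\mathbb{R}^s$ is simply connected, choose also a path $\sigma\subset L^s$ from $x_2$ back to $x_1$. Center-leaf holonomy along $\gamma$ yields a germ of homeomorphism $h_\gamma\colon(D^s_{x_1},x_1)\to(D^s_{x_2},x_2)$ between small stable disks, while translation inside $L^s$ along $\sigma$ yields a germ $\tau\colon(D^s_{x_2},x_2)\to(D^s_{x_1},x_1)$. I claim that the composite $\phi:=\tau\circ h_\gamma$ is not contained in $\Hol(L_0)$; this would be verified using the fibration $q\colon L^{cs}\to L^{cs}/\mathcal{F}^s$ (a fibration with contractible fibers, so $q_*\colon\pi_1(L^{cs})\xrightarrow{\cong}\pi_1(L^{cs}/\mathcal{F}^s)$), observing that $\pi_1(L_0)\subset\pi_1(L^{cs})$ is the stabilizer of $x_1$ under the monodromy action of $\pi_1(L^{cs}/\mathcal{F}^s)$ on the finite fiber $L_0\cap L^s$ of the covering $p|_{L_0}$, whereas the class $[\gamma\cdot\sigma]$ sends $x_1$ to $x_2\neq x_1$.

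The third and final step is to realize the enlarged group $G:=\langle\Hol(L_0),\phi\rangle$ as (a subgroup of) the holonomy of a nearby center leaf $L_y$, for $y\in D^s_{x_1}$ chosen in generic position. The $\Hol(L_0)$-orbit of $y$ in $D^s_{x_1}$ lies on $L_y$ by the Reeb-Stability fiber-bundle structure, and the $\phi$-translate of $y$ is forced to lie on $L_y$ as well by lifting the round trip $\gamma\cdot\sigma$ starting at $y$, using the local product structures of $\mathcal{F}^c$ and $\mathcal{F}^s$ along the concatenated path. Since $|G|>|\Hol(L_0)|$, this would yield $|\Hol(L_y)|\geq|G|>|\Hol(L_0)|$, contradicting the maximality assumption. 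The hard part will be this last step: rigorously turning the germ-level enlargement $G\supsetneq\Hol(L_0)$ into a genuine holonomy element of a single center leaf. This requires combining completeness of the foliations with the fibration structure $q$ to ensure that the pseudogroup orbit of $y$ under $G$ is contained in a single $\mathcal{F}^c$-leaf, which is precisely where the simple connectedness of the stable leaves and the maximality hypothesis on $|\Hol(L_0)|$ should interact.
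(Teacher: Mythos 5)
Your step 2--3 mechanism does not work, and the problem is structural rather than technical. The germ $\phi=\tau\circ h_{\gamma}$ is not a holonomy transformation of $\mathcal{F}^c$: while $h_{\gamma}$ is genuine center holonomy, $\tau$ is an essentially arbitrary identification of two stable disks lying in the same stable leaf (there is no canonical ``translation'' along $\sigma$, and $\sigma$ is transverse to $\mathcal{F}^c$, so it cannot be lifted by holonomy of either invariant foliation). Consequently nothing forces $\phi(y)$ to lie on the center leaf $L_y$; moving inside a stable leaf precisely does \emph{not} preserve center leaves. Worse, the conclusion you aim for in step 3 is impossible: for $y$ in a small stable disk at $x_1$, the leaf $L_y$ lies in the Reeb neighborhood $V$ of $L_0$, and the Generalized Reeb Stability Theorem (Theorem~\ref{thm1}) says every such leaf is a $k$-sheeted cover of $L_0$ with $\left|\Hol(L_y)\right|=\left|\Hol(L_0)\right|/k\leq\left|\Hol(L_0)\right|$, so no nearby leaf can carry the enlarged group $G$. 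A concrete test shows the mechanism is unsound independently of maximality: take a leaf $L_1$ with trivial holonomy that double covers a leaf with holonomy of order $2$ (as in Example~\ref{example_bundle}); then $L_1$ meets a stable leaf in two points, your construction of $\phi$ goes through verbatim, yet generic nearby leaves still have trivial holonomy, so ``$\Hol(L_z)\supseteq\langle\Hol(L_1),\phi\rangle$ for generic $z$'' is false. Finally, the fibration claim $q\colon L^{cs}\to L^{cs}/\mathcal{F}^s$ with $q_*$ an isomorphism on $\pi_1$, and the description of $\pi_1(L_0)$ as the stabilizer of the monodromy on $L_0\cap L^s$, presuppose that this leaf space is a reasonable space and that $L_0\to L^{cs}/\mathcal{F}^s$ is a covering --- which is essentially the content of the lemma you are trying to prove, so this is circular.

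What your proposal is missing is the dynamical step that the paper uses to reduce to a single foliation chart. In the paper one first applies $f^n$: since the order of the center holonomy group is constant along orbits (Lemma~\ref{lemma_period}) and $f^n$ contracts stable distances, one may assume $x_1,x_2\in L^s_{\gamma}(x_1)$, where $\gamma$ is a Lebesgue number for the fixed finite family of product neighborhoods $U_i$ of Lemma~\ref{l.cover}. Then $L_0$ is entirely contained in one $U_i$, which fibers over a center leaf $L_i$ with locally maximal holonomy, and $p_i|_{L_0}\colon L_0\to L_i$ is a covering that is at least $2$-to-$1$ over some point; Reeb stability then gives $2\left|\Hol(L_0)\right|\leq\left|\Hol(L_i)\right|$, contradicting maximality of $\left|\Hol(L_0)\right|$ inside $L^{cs}$. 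In other words, the two intersection points are brought close together by the dynamics before any holonomy bookkeeping is done; your attempt to handle arbitrarily separated intersection points by a pseudogroup element built from a stable path is exactly the part that cannot be made rigorous.
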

\begin{proof}[Proof of Lemma~\ref{l.maximal2}]
We have fixed a finite cover $(V_i)$ of $R$ defined by Lemma~\ref{l.cover}. Every $\mathcal{F}^c$-saturated set $V_i$ is homeomorphic to $U_i \times [-1,1]$. Then there exists a Lebesgue number $\gamma > 0$ such that for all $x \in R$ there exists $i$ such that $\mathcal{F}^c(D^s(x,\gamma)) =\left\{L_y \in \mathcal{F}^c\,\big|\; y \in L^s_{\gamma}(x)\right\} \subset U_i$. \\
Assume there exists $L^s \subset L^{cs}$ such that there are $x_1,x_2 \in L^s \cap L_0$ with $x_1 \neq x_2$. Then we could assume that $x_1,x_2$ lie inside a local stable manifold $L^s_{\gamma}(x_1)$ with radius $\gamma$. Consequently, there exists $i$ such that $\mathcal{F}^c(D^s(x_1,\gamma)) \subset U_i$ and therefore this implies $L_0 \subset U_i$. The set $U_i$ is a fiber bundle over a center leaf with locally maximal holonomy group, this means, there exists a center leaf $L_i \subset U_i$ such that $p_i: U_i \rightarrow L_i$ is a fiber bundle and $p_i|_{L_0}: L_0 \rightarrow L_i$ is a finite covering. As $L_0$ intersects $D^s(x_1,\gamma)$ at least in two different points $x_1,x_2$ there exists $x \in L_i$ such that $x_1,x_2 \in p_i^{-1}|_{L_0}(x)$. This implies that $2|\Hol(L_0)| \leq |\Hol(L_i)|$ contradicting that $L_0$ is the center leaf with maximal holonomy inside $L^{cs}$.   
\end{proof}
This Lemma directly implies the following: 
\begin{corol}\label{l.maximal}
For every center stable leaf $L^{cs} \in \mathcal{F}^{cs}$ there exists a center leaf $L_0 \in L^{cs}$ 
that intersects  every stable manifold $L^s \subset L^{cs}$ in exactly one point.
\end{corol}
The following Lemma will help us to complete the proof of Proposition~\ref{p.connected}: 
\begin{lemma}\label{l.gamma}
There exists $\gamma > 0$ such that for any center leaf $L \subset L^{cs}$ where $L$ intersects every stable leaf $L^s \subset L^{cs}$ in at most one point the set $$\bigcup_{x \in L}D^s(x,2\gamma) \setminus \bigcup_{x \in L}D^s(x,\gamma)$$ is connected.  
\end{lemma}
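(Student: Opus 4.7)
The plan is to choose $\gamma$ uniformly by a compactness argument on $M$, reinterpret the set in question as a continuous family of stable annuli parametrized by $L$, and conclude connectedness from the fact that $\dim E^s\geq 2$ (which is forced by our running assumption $\codim\mathcal{F}^c\geq 3$ with $\dim E^u=1$) together with the connectedness of $L$.

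First I would fix $\gamma>0$ by a standard compactness argument. Since $M$ is compact and the stable foliation has uniformly $C^1$ leaves varying continuously with the base point, there is $\gamma>0$ such that for every $x\in M$ the stable disk $D^s(x,2\gamma)$ is embedded in $L^s(x)$ and admits a parametrization $\psi_x$ by a fixed model disk $D\subset\mathbb{R}^{\dim E^s}$ that depends continuously on $x$. Crucially this $\gamma$ depends only on $M$, not on $L$, so it is uniform over all leaves satisfying the hypothesis of the lemma.

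Second, I would use the hypothesis on $L$ to simplify the set difference. For distinct $x,y\in L$, the assumption that $L$ meets every stable leaf at most once forces $L^s(x)\neq L^s(y)$, and since distinct stable leaves are disjoint, $D^s(x,2\gamma)\cap D^s(y,2\gamma)=\emptyset$. Consequently
\[
\bigcup_{x\in L}D^s(x,2\gamma)\;\setminus\;\bigcup_{x\in L}D^s(x,\gamma)\;=\;\bigcup_{x\in L}\bigl(D^s(x,2\gamma)\setminus D^s(x,\gamma)\bigr),
\]
that is, the set in question is the \emph{disjoint} union of the annular fibers $A_x:=D^s(x,2\gamma)\setminus D^s(x,\gamma)$. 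Without the hypothesis on $L$ this equality would fail: a point in the outer annulus at $x$ could lie in the inner disk at some other $y$, destroying the fiberwise description.

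Third, the continuous family $\{\psi_x\}$ from Step 1 assembles into a continuous surjection $\Phi:L\times A\to \bigcup_{x\in L}A_x$, $\Phi(x,v)=\psi_x(v)$, where $A:=\{v\in\mathbb{R}^{\dim E^s}:\gamma\leq\|v\|<2\gamma\}$ is the model annulus. Since $\dim E^s\geq 2$ the annulus $A$ is connected; since $L$ is a connected compact manifold, $L\times A$ is connected; hence its continuous image is connected, which is the conclusion of the lemma. The main obstacle I anticipate is technical rather than conceptual, namely the uniform continuous choice of the parametrizations $\psi_x$ on a common scale $\gamma$; this is the only place where one cashes in on the global partially hyperbolic structure, and it follows from continuity of the stable foliation combined with compactness of $M$. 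The rest is soft topology, powered by the dimensional inequality $\dim E^s\geq 2$ and the disjointness supplied by the hypothesis on $L$.
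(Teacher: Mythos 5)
Your proof is correct and takes essentially the same route as the paper: a uniform $\gamma>0$ from compactness of $M$, the at-most-one-intersection hypothesis turning $\mathcal{F}^s_{2\gamma}(L)\setminus\mathcal{F}^s_{\gamma}(L)$ into a fibered union over $L$ of the annuli $D^s(x,2\gamma)\setminus D^s(x,\gamma)$, and connectedness from the connectedness of $L$ and of each annulus (which, as you note, uses $\dim E^s\geq 2$). The paper phrases the final step as a product of connected sets rather than as the continuous image of $L\times A$, but the substance is identical.
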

We denote $\mathcal{F}^s_{\gamma}(L):=\bigcup_{x \in L}D^s(x,\gamma)$. 
\begin{proof}[Proof of Lemma~\ref{l.gamma}]
A local stable manifold is a disk, hence, there exists $\gamma > 0$ such that for any $x\in M$ the local stable manifold $D^s(x,\gamma)$ is a disk inside $L^s(x)$. As the stable leaf $L^s(x)$ is homeomorphic to $\mathbb{R}^{d_s}$, this implies that $D^s(x,2\gamma) \setminus D^s(x,\gamma)$ is connected. \\
Because $L$ intersects $L^s(x)$ for every $x \in L$ at most once, the set $\mathcal{F}^s_{\gamma}(L)$ is a trivial fiber bundle over $L$, so $\mathcal{F}^s_{\gamma}(L) = D^s(x,\gamma) \times L$, and hence, $\mathcal{F}^s_{2\gamma}(L) \setminus \mathcal{F}^s_{\gamma}(L) = (D^s(x,2\gamma)\setminus D^s(x,\gamma)) \times L$ is a product of connected sets and therefore connected.
\end{proof}
This directly implies that for any center stable leaf $L^{cs}$ and any center leaf $L$ with the properties of Lemma~\ref{l.gamma} $L^{cs} \setminus \mathcal{F}^s_{\gamma}(L)$ is connected. 
\begin{proof}[Proof of Proposition~\ref{p.connected}]
Let $\gamma > 0$ be given by Lemma~\ref{l.gamma}. 
The set $K$ is compact, therefore there exists $n$ such that $f^nK \subset L^s_{\gamma}(f^nL)$. Choose $N:= f^{-n}L^s_{\gamma}(f^nL)$, then Lemma~\ref{l.gamma} implies the statement. 
\end{proof}
Proposition~\ref{p.connected} now implies quite directly Lemma~\ref{l.delta}:
\begin{proof}[Proof of Lemma~\ref{l.delta}]
We take the closure $\overline{U_{m_j}}$ of neighborhoods containing the maximal points $m_j \in L^{cs}(m_1)$ ($m_1$ included) which are finitely many. So the union $\bigcup_j \overline{U_{m_j}}$ is a compact set inside $L^{cs}(m_1)$ which we denote by $\Delta_0$. The set $\Delta_0$ already fulfills the first property of the searched $\Delta$. With Proposition~\ref{p.connected} we assure the existence of a compact set $\Delta \supset \Delta_0$ with the wanted properties which completes the proof. 
\end{proof}


With the arguments above we have established the existence of compact set $\Delta \subset L^{cs}(m_1)$ such that $L^{cs}(m_1) \setminus \Delta$ is connected.\\ 
We denote by $(x,y)^u :=\left\{z \in L^u(x)\;\big|\;x < z < y\right\} \subset L^u_+(x)$ the open arc in the unstable leaf with $x < y$ with respect to the orientation of $\mathcal{F}^u$. 
\begin{lemma}\label{l.emptyintersection}
For all $x \in L^{cs}(m_1) \setminus \Delta$ there exists $y \in L^u_+(x) \cap R$ such that $(x,y)^u \cap R = \emptyset.$ 
\end{lemma}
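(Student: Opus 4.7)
The plan is to exploit two structural features: the fact that $R$, being a repeller, is $\mathcal{F}^{cs}$-saturated (hence $L^{cs}(m_1)\subset R$, so in particular $x\in R$), and the local box structure supplied by Lemma~\ref{l.cover}. In each box $V_i\cong U_i\times[-1,1]$ the unstable leaves are vertical fibers $\{u\}\times[-1,1]$, so $\mathcal{F}^{cs}$-saturation of $R$ forces
\[
R\cap V_i \;=\; U_i\times T_i
\]
for some closed subset $T_i\subset[-1,1]$. I would then define the first entry time
\[
\tau(x) \;:=\; \inf\bigl\{t>0 : \phi^u_t(x)\in R\bigr\},
\]
where $\phi^u_t$ is translation by arc-length $t$ along the oriented unstable leaf, and set $y:=\phi^u_{\tau(x)}(x)$. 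Closedness of $R$ and continuity of $\phi^u$ give $y\in R$ as soon as $\tau(x)<\infty$, and the infimum together with $\tau(x)>0$ instantly yields $(x,y)^u\cap R=\emptyset$. So the whole lemma reduces to proving $0<\tau(x)<\infty$ for every $x\in L^{cs}(m_1)\setminus\Delta$.

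Finiteness is the easier half. Writing $x=(u_0,t_x)\in V_i\cap L^{cs}(m_1)$, the hypothesis $x\notin\Delta$ rules out $t_x=\max T_i$, since the top plaque $U_{m_i}=U_i\times\{\max T_i\}$ belongs to $\Delta$ whenever it meets $L^{cs}(m_1)$. Hence $t_x<\max T_i$, and the point $(u_0,\max T_i)\in R$ witnesses $\tau(x)\le \max T_i-t_x<\infty$. Positivity is subtler: it asks that $t_x$ is not a right-accumulation point of $T_i$, so that $R$ really leaves $x$ for a positive amount of unstable time before coming back.

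To establish positivity at every $x$, my plan is a connectedness argument using Lemma~\ref{l.delta}. Let
\[
A \;:=\; \bigl\{x\in L^{cs}(m_1)\setminus\Delta : 0<\tau(x)<\infty\bigr\}.
\]
I would verify that $A$ is open, closed, and non-empty in $L^{cs}(m_1)\setminus\Delta$, and then conclude $A=L^{cs}(m_1)\setminus\Delta$ by path-connectedness. Non-emptiness is obtained from the distinguished box $V_i$ in which $U_i\times\{1\}$ lies outside $R$ (guaranteed by the orientation choice in Lemma~\ref{l.cover}): any point of $L^{cs}(m_1)$ lying in that $V_i$ strictly below the maximal slice is an explicit witness, with first re-entry into $R$ at the top plaque. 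Openness of $A$ follows from the continuity of $\phi^u$ combined with the $\mathcal{F}^{cs}$-saturation of $R$: a nearby $x'$ yields a nearby first-entry candidate in the same $V_i$, and the lower bound on $\tau$ is stable under small perturbations in the fiber coordinate. Closedness uses that the cover is finite: a convergent sequence $x_n\to x$ in $A$ has entry times $\tau(x_n)$ uniformly bounded (by the finite diameter of the fibers in $(V_i)$), and a subsequential limit produces a legitimate $y$ for $x$ by closedness of $R$.

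The main obstacle is the positivity step: one must rule out that the closed set $T_i$ accumulates on $t_x$ from above. My approach is that the connectedness machine, once fed a single good point, propagates positivity along $L^{cs}(m_1)\setminus\Delta$. Concretely, at any putative bad $x$ where $T_i$ right-accumulates, either a small refinement of the cover $(V_i)$ isolates the bad level inside $\Delta$, or else $x$ would be limit of maximal-plaque points, forcing $x\in \Delta$ by closure considerations against $\Delta$; in both cases the hypothesis $x\notin\Delta$ is contradicted. Once positivity holds on all of $L^{cs}(m_1)\setminus\Delta$, taking $y=\phi^u_{\tau(x)}(x)$ finishes the argument.
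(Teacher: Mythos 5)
Your reduction to showing $0<\tau(x)<\infty$ is fine, and the observation $R\cap V_i=U_i\times T_i$ is correct, but both halves of your argument rest on a conflation of ``top of $R$ inside the box $V_i$'' with the maximal plaques that make up $\Delta$. A point of the plaque $U_i\times\{\max T_i\}$ is one of the paper's maximal points only if its \emph{entire} forward unstable ray misses $R$ globally; if that ray re-enters $R$ outside $V_i$, then no $m_i$ is chosen in this box and the top plaque need not lie in $\Delta$. Hence ``$t_x=\max T_i$ forces $x\in\Delta$'' is unjustified, and likewise your non-emptiness witness (``first re-entry into $R$ at the top plaque'') is false in general, since $T_i$ may contain further levels, possibly accumulating on $t_x$ from above, strictly below $\max T_i$. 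The correct finiteness argument --- and it is the paper's --- is the other way around: all maximal points of $V_i$ lie in the single plaque $U_i\times\{\max T_i\}$, so if $x$ were a global maximal point this plaque would contain the chosen $m_i\in L^{cs}(m_1)$ and hence lie in $\Delta$, contradicting $x\notin\Delta$; therefore $L^u_+(x)\cap R\neq\emptyset$.

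The decisive gap, however, is the positivity step, which is exactly the content of the lemma beyond finiteness: one must rule out that $R$ accumulates on $x$ from above along $L^u(x)$, i.e.\ that $T_i$ right-accumulates at $t_x$. Your connectedness machine does not deliver this. The closedness argument you give only passes a \emph{finite} entry time to the limit point, not a positive lower bound; the non-emptiness input is not established (see above); and the concluding dichotomy is unsupported: levels $t_n\in T_i$ with $t_n\downarrow t_x$ correspond to points whose forward rays meet $R$ already inside the box, so they are not maximal points, their plaques have no reason to lie in $\Delta$, and ``refining the cover $(V_i)$'' changes neither $R$ nor $\Delta$ nor the truth of the statement, so neither horn of the dichotomy contradicts $x\notin\Delta$. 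By contrast, the paper's proof obtains some $y\in L^u_+(x)\cap R$ from the maximal-point argument above and then simply takes $y$ minimal with respect to the orientation of $E^u$; it runs no open--closed argument at all. So your route is genuinely different in its second half, but as written the minimality/positivity assertion is not proved and the auxiliary claims feeding the connectedness argument are false as stated; if you want to salvage that strategy you would need an actual witness of positivity in $L^{cs}(m_1)\setminus\Delta$ and a proof that positivity is locally constant from plaque to plaque, neither of which is supplied.
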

\begin{proof}[Proof of Lemma~\ref{l.emptyintersection}]
Let $x \in L^{cs}(m_1) \setminus \Delta$ be an arbitrarily chosen point. There exists some neighborhood $V_i$ and a center stable plaque $U_i \times \left\{p\right\}=U_x \subset R$ with $p < 1$ within $V_i$ such that $x$ is contained in it. The point $x$ is not inside $\Delta$, hence, there is no maximal point $m_i$ of intersection inside $U_x \cap R$, so there must exist a point $y \in L^u_+(x) \cap R$ with $y > x$ in sense of orientation of $E^u$. We choose $y$ minimal (with respect to the orientation of $E^u$) with these properties, so it follows $(x,y)^u \cap R$. 
   
\end{proof}
We denote by $I_x=(x,y)^u$ where $y$ is defined by Lemma~\ref{l.emptyintersection} above. 
\begin{lemma}\label{l.help}
Let $x_1, x_2 \in L^{cs}(m_1)\setminus \Delta$ then for every path $\gamma \subset L^{cs}(m_1)\setminus \Delta$ (up to homotopy) with $\gamma(0)=x_1$ and $\gamma(1)=x_2$ there exists a homeomorphism $H_{\gamma}: I_{x_1} \to I_{x_2}$.  
\end{lemma}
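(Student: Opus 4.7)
The plan is to define $H_\gamma$ as a holonomy-type transport of the unstable segment $I_{x_1}$ along the path $\gamma$, built from the local product structure provided by Lemma~\ref{l.cover}. The crucial enabling fact is that $R$ is $\mathcal{F}^{cs}$-saturated: since $R$ is a repeller under $F$, the stable manifold theorem for the quotient dynamics gives $\mathcal{F}^s(L)\subset R$ for every $L\in R$. Consequently, in every neighborhood $V_i\cong U_i\times[-1,1]$ of the cover, the intersection $R\cap V_i$ is a disjoint union of horizontal slices $U_i\times\{\alpha\}$, and the upper endpoint $y$ of the open arc $I_x=(x,y)^u$ is locally constant in the stable direction.

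Concretely, I would cover the compact image of $\gamma$ by finitely many neighborhoods $V_{i_1},\dots,V_{i_N}$ and choose a subdivision $0=t_0<t_1<\cdots<t_N=1$ fine enough that each arc $\gamma([t_{k-1},t_k])$ lies inside a single center-stable plaque of $V_{i_k}$, say $U_{i_k}\times\{p_k\}$ in the chart $\phi_{i_k}$. Writing $\gamma(t_{k-1})=(u_{k-1},p_k)$ and $\gamma(t_k)=(u_k,p_k)$, the two intervals become $I_{\gamma(t_{k-1})}=\{u_{k-1}\}\times(p_k,q_k)$ and $I_{\gamma(t_k)}=\{u_k\}\times(p_k,q_k)$ for a common value $q_k$, namely the smallest $\alpha>p_k$ with $U_{i_k}\times\{\alpha\}\subset R$. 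The sliding map $h_k(u_{k-1},s):=(u_k,s)$ is a homeomorphism between them, and I set $H_\gamma:=h_N\circ\cdots\circ h_1$.

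The next step is to verify that this construction is well-defined, i.e.\ independent of the cover and the subdivision. On any overlap $V_i\cap V_j$ both the unstable foliation and the center-stable plaques are intrinsic to $f$ (they do not depend on the charts $\phi_i,\phi_j$), so the two local sliding maps coincide where both are defined; a standard refinement argument then gives invariance under the choices. For homotopy invariance, I would invoke the usual Reeb--Ehresmann argument for holonomies of foliated objects: a homotopy $\gamma_s$ (relative to endpoints) inside $L^{cs}(m_1)\setminus\Delta$ can be covered by a finite grid of small rectangles, each contained in some $V_i$; on each such rectangle the sliding map is constant in both parameters, so $H_{\gamma_s}$ is independent of $s$.

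The principal difficulty lies in justifying continuity of the roof function $x\mapsto y(x)$ along $\gamma$, which is what legitimizes gluing the local sliding maps. This requires combining $\mathcal{F}^{cs}$-saturation of $R$ with the local product structure from Lemma~\ref{l.cover} and, critically, the fact that $\gamma$ avoids $\Delta$: by construction $\Delta$ contains every maximal point $m_j$ of $R\cap L^{cs}(m_1)$ in the unstable direction together with a neighborhood, so outside $\Delta$ the roof function can neither collapse (there is always another $R$-slice above) nor jump to a lower plaque between adjacent $V_i$'s. Once this continuity (actually, local constancy along each plaque) is in hand, everything else is a routine verification within the product charts.
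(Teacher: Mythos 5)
Your construction is essentially the paper's: the author also defines $H_{\gamma}$ by transporting points of $I_{x_1}$ along the center-stable plaques, using the product structure of the charts $V_i\cong U_i\times[-1,1]$ from Lemma~\ref{l.cover} (phrased there as unique lifting of $\gamma$ to the plaque through each $y\in I_{x_1}$), and then concatenating these local maps over a finite chain of charts covering $\gamma$ in the path-connected set $L^{cs}(m_1)\setminus\Delta$. Your extra verifications (chart-independence of the sliding maps and homotopy invariance via a grid of rectangles) are sound and only supplement what the paper leaves implicit, the homotopy issue being settled there in Lemma~\ref{l.help2} through the triviality of the $\mathcal{F}^{cs}$-holonomy.
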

\begin{proof}[Proof of Lemma~\ref{l.help}]
Let $x_0,x_1 \in U_i\times\left\{p\right\}$ where $U_i$ is some center stable plaque defined by the cover $(V_i=U_i \times[-1,1])$ of $R$ and $p \in [-1,1]$ and let $\gamma$ be a path from $x_0$ to $x_1$ inside $U_i$. The neighborhood $V_i$ is a product of $U_i$ and $[-1,1]$, so we can lift for every $y \in I_{x_1}$ uniquely the path $\gamma$ to a path $\gamma_y$ starting at $y$. We then define $H_{\gamma}(y)=I_{x_2}\cap \gamma_y$. This map is then a homeomorphism because the path lifting depends continuously on $y \in I_{x_1}$ and the map is injective by the uniqueness of the lifted path and the triviality of the product. \\
As $L^{cs}(m_1)\setminus \Delta$  is covered by finitely many $V_i$ and path connected we can define for any $x_0,x_1$ and path $\gamma(0)=x_0, \gamma(1)=x_1$ inside $L^{cs}(m_1)\setminus \Delta$ the map $H_{\gamma}$ as a concatenation of local homeomorphisms defined on every $V_i$.

\end{proof}
\begin{lemma}\label{l.help2}
Let $x_1 \in L^{cs}(m_1)\setminus \Delta$ and $\gamma$ a closed path with $\gamma(0)=\gamma(1)=x_1$ then $H_{\gamma}=\id$. 
\end{lemma}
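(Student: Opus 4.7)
The plan is to exploit the triviality of the holonomy of the center-stable foliation $\mathcal{F}^{cs}$: this foliation is codimension one (since $\dim E^u = 1$) and transversely oriented (since $E^u$ is oriented), and the remark following Lemma~\ref{lemma_1dim}, which the paper already invokes at the start of Section~\ref{s.transitive}, yields that every holonomy germ of $\mathcal{F}^{cs}$ is the identity. By construction the map $H_\gamma$ on the unstable arc $I_{x_1}$ is nothing other than a one-sided $\mathcal{F}^{cs}$-holonomy along $\gamma$ restricted to $I_{x_1}$, so the strategy is to propagate the local triviality of this holonomy along all of $I_{x_1}$ by a connectedness argument.

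I introduce the fixed-point set
\[
S := \{\, y \in I_{x_1} \;:\; H_\gamma(y) = y\,\}.
\]
Since $H_\gamma \colon I_{x_1} \to I_{x_1}$ is a homeomorphism by Lemma~\ref{l.help}, and in particular continuous, $S$ is closed in $I_{x_1}$. For non-emptiness, note that for $y \in I_{x_1}$ sufficiently close to the endpoint $x_1$, $H_\gamma(y) = \gamma_y(1)$ is by definition the image of $y$ under the one-sided holonomy germ of $\mathcal{F}^{cs}$ along the loop $\gamma$ based at $x_1$. This germ is trivial, so $H_\gamma(y) = y$ on a one-sided neighbourhood of $x_1$ in $I_{x_1}$, and $S$ accumulates at $x_1$.

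The crucial step is openness of $S$. Fix $y_0 \in S$, so that $\gamma_{y_0}$ is a closed loop at $y_0$ contained in the leaf $L^{cs}(y_0)$. For $y \in I_{x_1}$ sufficiently close to $y_0$, uniqueness of path-lifting in the product charts $V_i$ provided by Lemma~\ref{l.cover} shows that $\gamma_y$ coincides with the lift of the loop $\gamma_{y_0}$ starting at $y$: both are lifts of the common path $\gamma$ starting at $y$, chart by chart. Hence $H_\gamma(y) = \gamma_y(1)$ is precisely the image of $y$ under the $\mathcal{F}^{cs}$-holonomy along the loop $\gamma_{y_0}$ based at $y_0$. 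Triviality of that holonomy germ forces $H_\gamma(y) = y$ on a whole neighbourhood of $y_0$ in $I_{x_1}$, so $y_0$ is interior to $S$.

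Since the open arc $I_{x_1}$ is connected and $S$ is non-empty, open and closed, we conclude $S = I_{x_1}$, that is, $H_\gamma = \id$. The main technical obstacle I expect lies in the openness step: one has to verify carefully that the concatenation of the finitely many product-chart lifts that define $\gamma_y$ genuinely agrees, as a path in $L^{cs}(y)$, with the lift of $\gamma_{y_0}$ starting at $y$, rather than with some path in a homotopically distinct class, so that the triviality of the $\mathcal{F}^{cs}$-holonomy along $\gamma_{y_0}$ really transports to the equality $H_\gamma(y) = y$ throughout a neighbourhood of $y_0$.
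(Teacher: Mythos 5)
Your proof is correct and rests on exactly the same key fact as the paper's: at a fixed point of $H_{\gamma}$ the germ of $H_{\gamma}$ coincides with an $\mathcal{F}^{cs}$-holonomy germ along the lifted closed loop, which is trivial because $\mathcal{F}^{cs}$ is a transversely oriented codimension-one foliation with trivial holonomy. The paper packages this as a contradiction (if $H_{\gamma}\neq \id$, an orientation-preserving homeomorphism of the interval has a fixed point $p\in\overline{I}_{x_1}$ adjacent to moved points, producing a non-trivial holonomy germ of $L^{cs}(p)$), whereas you run the equivalent open-closed argument on the fixed-point set, adding the non-emptiness step via the germ at $x_1$; the difference is only bookkeeping, not substance.
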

\begin{proof}[Proof of Lemma~\ref{l.help2}]
Assume $H_{\gamma}\neq\id$. As $H_{\gamma}$ is an orientation preserving homeomorphism of the interval $I_{x_1}$ there exists a fixed point $p \in \overline{I}_{x_1}$ such that either the interval $(p,p+\epsilon)$ or $(p-\epsilon, p)$ is attracted to $p$. \\
The closed path $\gamma$ can be uniquely lifted to a path $\gamma_p$ through $p$ inside the center stable leaf $L^{cs}(p)$ through $p$. 
Then the center stable leaf $L^{cs}(p)$ has a germ of a holonomy homeomorphism $h_{\gamma_p}$ which is not the identity contradicting that the center stable foliation has trivial holonomy. 
\end{proof}

\begin{lemma}\label{l.help3}
Let $y \in L^u_+(m_1)$ be a point sufficiently close to $m_1$. Then $L^{cs}(y) \subset U$. 
\end{lemma}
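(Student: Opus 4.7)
The plan is to use the arcs $I_x$ together with the trivial monodromy of Lemma~\ref{l.help2} to display $L^{cs}(y)$, for $y$ close to $m_1$, as a global ``parallel translate'' of $L^{cs}(m_1)$ in the unstable direction.

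First, I would fix a basepoint $x_0 \in L^{cs}(m_1) \setminus \Delta$ very close to $m_1$, so that within $V_1$ the local unstable fiber over $x_0$ contains a point $y_0 \in I_{x_0}$ at the same unstable ``height'' as $y$ and lying in $L^{cs}(y)$ by the product structure of $V_1$. For every $x \in L^{cs}(m_1) \setminus \Delta$, path connectedness from Lemma~\ref{l.delta} supplies a path $\gamma$ from $x_0$ to $x$ inside $L^{cs}(m_1) \setminus \Delta$, and I set $y(x) := H_\gamma(y_0) \in I_x$. By Lemma~\ref{l.help2} the point $y(x)$ does not depend on $\gamma$, and by the very definition of $H_\gamma$ in Lemma~\ref{l.help} as a path lift along the unstable fibers inside the local $\mathcal{F}^c$-saturated products $V_i$, the lifted curve stays in the center stable leaf $L^{cs}(y_0) = L^{cs}(y)$. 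Hence every $y(x)$ belongs to $L^{cs}(y)$.

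Next, I would verify that the set $A := \{y(x) : x \in L^{cs}(m_1)\setminus\Delta\}$, together with a small open $M$-neighborhood $N(\Delta)$ of $\Delta$ contained in $U$, exhausts $L^{cs}(y)$. Each $y(x) \in I_x$ lies inside whichever $V_i \subset U$ contains $x$, so $A \subset U$. Since $\Delta$ is compact and $\Delta \subset L^{cs}(m_1) \subset R \subset U$ with $U$ open, such a neighborhood $N(\Delta)$ exists; for $y$ sufficiently close to $m_1$, the finite product structure from Lemma~\ref{l.cover} guarantees that the portion of $L^{cs}(y)$ passing over $\Delta$ through unstable fibers is contained in $N(\Delta)$. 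A standard openness-closedness argument inside the connected leaf $L^{cs}(y)$---using that $L^{cs}(y)$ avoids $R$ entirely (as $y \notin R$ and $R$ is $\mathcal{F}^{cs}$-saturated) together with the local product structure in each $V_i$---then shows $A \cup (L^{cs}(y) \cap N(\Delta)) = L^{cs}(y)$. Thus $L^{cs}(y) \subset U$.

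The main obstacle is ensuring that the offset $y(x) \in I_x$ never ``jumps'' past an intermediate $R$-sheet within some $V_i$: this is precisely what forces $y$ to be close to $m_1$. The cover $(V_i)$ is finite and each product region $V_i \cong U_i \times [-1,1]$ contains only finitely many sheets of $R$, so there is a uniform positive lower bound on the unstable gap between successive $R$-sheets across the entire cover. Taking the unstable offset from $y$ to $m_1$ strictly smaller than this bound keeps $y(x)$ in $I_x$ for every $x \in L^{cs}(m_1) \setminus \Delta$ and therefore confines the leaf $L^{cs}(y)$ to $U$.
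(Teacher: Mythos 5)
Your overall strategy is the paper's: transport one point of $L^{cs}(y)$ over $L^{cs}(m_1)\setminus\Delta$ using the maps $H_\gamma$ of Lemma~\ref{l.help}, use Lemma~\ref{l.help2} to make this unambiguous, treat the part of $L^{cs}(y)$ lying over $\Delta$ by the product charts of Lemma~\ref{l.cover}, and conclude $L^{cs}(y)\subset U$. However, two of your steps fail as written. First, the basepoint does not exist as you describe it: $\Delta$ contains the closed plaque $\overline{U_{m_1}}$, so no point of $L^{cs}(m_1)\setminus\Delta$ is ``very close to $m_1$'' inside the plaque of $m_1$; moreover, by maximality of $m_1$ no plaque of $R$ inside $V_1$ lies above the plaque of $m_1$, so for any $x_0\in (L^{cs}(m_1)\cap V_1)\setminus\Delta$ (necessarily on a lower plaque) the gap $I_{x_0}$ terminates at a height at most that of $m_1$'s plaque, while $y$ sits strictly above it; hence there is no point of $L^{cs}(y)$ ``at the same unstable height as $y$'' lying in $I_{x_0}$ within $V_1$. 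To produce a legitimate starting point one must connect $L^{cs}(y)$ from above $m_1$ to above a point just outside $\Delta$ by pushing through a chain of charts: this is exactly why the paper enlarges $\Delta$ to a compact $\Delta'$ with $\Delta$ in its interior, pushes $\Delta'$ to a set $\Delta'_y\subset L^{cs}(y)$, and then imposes the condition that $\Delta'_y$ intersects the gap $(y_0,y_R)^u$ over some $y_0\in\Delta'\setminus\Delta$ (Lemma~\ref{l.emptyintersection}); that intersection point is the point one transports. This condition, together with keeping $\Delta'_y$ inside the charts, is where ``$y$ sufficiently close to $m_1$'' is genuinely used.

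Second, your justification in the last paragraph is wrong: it is not true in general that each $V_i\cong U_i\times[-1,1]$ meets $R$ in only finitely many sheets, so there is no ``uniform positive lower bound on the unstable gap between successive $R$-sheets''. The repeller $R$ is merely a compact $\mathcal{F}^{cs}$-saturated set; its trace on an unstable fiber is a closed set and can perfectly well be a Cantor set. Fortunately the worry this claim is meant to address is vacuous: because $R$ meets each chart in full plaques $U_i\times\{t\}$, the path-lifting construction of Lemma~\ref{l.help} already sends the gap $I_{x_0}$ homeomorphically onto the gap $I_x$, so the transported point can never ``jump past'' an $R$-sheet no matter how small the gaps are. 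In other words, $y(x)\in I_x$ holds by the very statement of Lemma~\ref{l.help}, and smallness of the offset of $y$ cannot be derived from (nor is it needed for) any gap bound; what it must guarantee is the existence and placement of the starting point (the paper's condition that $\Delta'_y$ meets $(y_0,y_R)^u$) and that the portion of $L^{cs}(y)$ over the compact set $\Delta'$ remains inside the finitely many charts $V_i\subset U$. As it stands, your proof neither constructs a valid starting point nor gives a correct reason why closeness of $y$ to $m_1$ suffices, so the argument has a genuine gap at precisely the point the lemma is about.
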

\begin{proof}[Proof of Lemma~\ref{l.help3}]
Let $\Delta'\subset L^{cs}(m_1)$ be a compact $\mathcal{F}^c$-saturated set such that $\Delta \subset \interior(\Delta')$. As $\Delta'$ is compact we can cover it with finitely many $V_{i}$ of the cover $(V_i)$. By the choice of $\Delta'$ it can be covered by $\mathcal{F}^c$-saturated stable plaques $U_{k}$ inside $V_{i}$.  Each of these plaques $U_k \cap \Delta'$ can be pushed inside $V_i$ to the $\mathcal{F}^c$-saturated stable plaque through $L^{cs}(y)$, i.e. $V_i \cap L^{cs}(y)$ as $V_i$ is a product neighborhood with respect to the center stable foliation - given $y \in L^u_+(m_1)$ sufficiently close to $m_1$. In this way we obtain a homeomorphic set $\Delta_y'$ through $y$. Fix $y_0 \in \Delta'\setminus \Delta$. By Lemma~\ref{l.emptyintersection} there exists a $y_{R}$ such that $(y_0,y_{R})^u \subset L^u_+(y_0)$ does not intersect $R$. We choose then $y \in L^u_+(m_1)$ in such a way that $\Delta_y'$ intersects $(y_0,y_{R})^u$. Then the homeomorphism $H_{\gamma}$ with respect to a path $\gamma(0)=y_0$ and $\gamma(1)=x$ is well-defined for any $x \in L^{cs}(m_1)\setminus \Delta$. The proof of the following claim ends the proof of the Lemma.\end{proof} 
\begin{claim}
With the notations above, for any $y_0 \in \Delta' \setminus \Delta$ we have 
$$L^{cs}(y) = \Delta'_y \cup \bigcup_{x \in L^{cs}(m_1) \setminus \Delta}H_{\gamma}(\tilde{y}_0)$$
for $\tilde{y}_0=\Delta'_y \cap (y_0,y_{\Omega})^u$ and any path $\gamma$ with $\gamma(0)=y_0$ and $\gamma(1)=x$.
\end{claim}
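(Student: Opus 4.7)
The goal is to identify $L^{cs}(y)$ with the union $\Delta'_y \cup \phi(L^{cs}(m_1)\setminus\Delta)$, where I write $\phi(x) := H_{\gamma}(\tilde{y}_0)$ for any path $\gamma$ from $y_0$ to $x$ inside $L^{cs}(m_1)\setminus\Delta$. The first thing to check is well-definedness of $\phi$: by Lemma~\ref{l.help2}, any closed loop in $L^{cs}(m_1)\setminus\Delta$ lifts under the construction of $H$ to the identity on its unstable arc. Hence $H_{\gamma}(\tilde{y}_0)$ depends only on the endpoint $x$, not on the choice of $\gamma$ within its homotopy class relative to endpoints, and actually not even on the homotopy class since the lift of any loop is trivial. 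Thus $\phi\colon L^{cs}(m_1)\setminus\Delta \to L^{cs}(y)$ is a well-defined continuous map.

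The inclusion $\Delta'_y \cup \phi(L^{cs}(m_1)\setminus\Delta) \subset L^{cs}(y)$ is then straightforward: $\Delta'_y$ lies in $L^{cs}(y)$ by its very construction in Lemma~\ref{l.help3} via the product structure of the $V_i$'s, which preserves center stable plaques; and each $\phi(x)$ lies in $L^{cs}(y)$ because within each $V_i \cong U_i \times [-1,1]$ crossed by $\gamma$, the defining lift of $\gamma$ stays at constant height $t$, i.e.\ inside a single center stable plaque $U_i \times \{t\}$. Since the lift starts at $\tilde{y}_0 \in \Delta'_y \subset L^{cs}(y)$, it ends in $L^{cs}(y)$ as well.

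The reverse inclusion is the main step, and I would obtain it from connectedness of $L^{cs}(y)$ by showing that $A := \Delta'_y \cup \phi(L^{cs}(m_1)\setminus\Delta)$ is non-empty, relatively open, and relatively closed in $L^{cs}(y)$. Non-emptiness is obvious. For openness, a point $z \in A$ lies in some $V_j$, whose product structure identifies its center stable plaque with a plaque of $L^{cs}(m_1)$; points of a small neighborhood of $z$ in this plaque whose projection lands in $\Delta'$ lie in $\Delta'_y$ automatically, while those whose projection falls in $L^{cs}(m_1)\setminus\Delta$ are realised as $\phi(x')$ by concatenating the defining path of $z$ with a short path inside $V_j \cap L^{cs}(m_1)$. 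Closedness follows symmetrically, or equivalently by proving that $L^{cs}(y)\setminus A$ is also open by the same local mechanism. The main obstacle is keeping the local identifications between plaques of $L^{cs}(m_1)$ and $L^{cs}(y)$ coherent across overlapping $V_i$'s; this coherence is precisely what the triviality of the center stable holonomy gives through Lemma~\ref{l.help2}, so that the openness/closedness argument only ever uses local information within a single $V_j$.
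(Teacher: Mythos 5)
Your proposal is correct in substance, but it follows a genuinely different (and much fuller) route than the paper: the paper's proof of this claim consists of a single remark that the right-hand side is the same for every choice of $y_0 \in \Delta'\setminus\Delta$, i.e.\ it only addresses well-definedness and leaves the actual equality with $L^{cs}(y)$ implicit in the surrounding construction of Lemma~\ref{l.help3}. You instead establish well-definedness of $x\mapsto H_{\gamma}(\tilde{y}_0)$ from Lemma~\ref{l.help2}, get the inclusion of the right-hand side into $L^{cs}(y)$ from the fact that the lifts stay in center stable plaques of the charts $V_i$ and start at $\tilde{y}_0\in L^{cs}(y)$, and obtain the reverse inclusion by an open--closed argument in the connected leaf $L^{cs}(y)$ (in its leaf topology); this buys an actual proof of the equality, which is exactly what Lemma~\ref{l.help3} needs, since the right-hand side is contained in $\bigcup_i V_i \subset U$. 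One point you call ``automatic'' deserves to be made explicit, because it is the only nontrivial content beyond Lemma~\ref{l.help2}: by minimality of $\mathcal{F}^{cs}$ the leaf $L^{cs}(y)$ meets every local unstable fibre of a chart in infinitely many points, so a point of $L^{cs}(y)$ lying over $\Delta'$ is not by itself a point of $\Delta'_y$; in the openness step at projections landing in $\Delta$, and again in the closedness step, you need that the section $x\mapsto H_{\gamma}(\tilde{y}_0)$ agrees over $\Delta'\setminus\Delta$ with the section of $L^{cs}(y)$ that defines $\Delta'_y$. This does hold -- both are continuous liftings into the same leaf which coincide at $\tilde{y}_0$ over $y_0$, and Lemma~\ref{l.help2} makes the lifting path-independent, so the agreement propagates over all of $\Delta'\setminus\Delta$ and across overlapping charts -- and your closing remark on coherence points in exactly this direction, but in a complete write-up this verification should be spelled out rather than asserted.
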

\begin{proof}
The set is actually equal for any point $y_0 \in \Delta' \setminus \Delta$, so it does not depend on the choice of $y_0$.  
\end{proof}






\section{Trivial Holonomy}\label{s.trivial}
We can now show that the center foliation has only trivial holonomy under the assumption of a one-dimensional oriented unstable direction. First, we prove some general preliminary results about the center holonomy group.

\subsection{Center holonomy}

Let $\phi:D^q \rightarrow M$ be a smooth embedding of a $q$-dimensional disk $D^q$ where $q=\codim \mathcal{F}^c$, $\phi(0)=x$ and $\phi(D^q)=:T$ is transverse at $x$ to $L=L_x\in \mathcal{F}^c$. Then $\mathcal{F}^{cs}$ and $\mathcal{F}^{cu}$ induce foliations on $T$, called $\mathcal{T}^s$ and $\mathcal{T}^u$ respectively. We denote by $H_{\gamma}$ a holonomy homeomorphism generated by $\gamma \in \pi_1(L,x)$, by $H^u_{\gamma}$ a holonomy homeomorphism of the center-stable foliation generated by $\gamma \in \pi_1(L,x)$ and with $H^s_{\gamma}$ a holonomy homeomorphism of the center-unstable foliation. 
\begin{lemma}
Let $f: M \rightarrow M$ be a $C^1$-partially hyperbolic system with an $f$-invariant uniformly compact center foliation $\mathcal{F}^c$. Assume $\dim E^u = 1$. 
For all $\gamma \in \pi_1\left(L, x\right)$, the holonomy map $H_{\gamma}: T \rightarrow T$ is a cartesian product $H^u_{\gamma}\times H^s_{\gamma}$. If $E^u$ is oriented, then it holds $H^u_{\gamma}\equiv \id$. 
\label{lemma_product}
\end{lemma}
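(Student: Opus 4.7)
The plan is to leverage dynamical coherence to install a local product structure on the transversal $T$ using the traces $\mathcal{T}^s$ and $\mathcal{T}^u$ of the center-stable and center-unstable foliations, then observe that $H_\gamma$ permutes the leaves of each sub-foliation and therefore splits as a cartesian product, and finally use transverse orientability of $\mathcal{F}^{cs}$ (a consequence of $E^u$ being oriented) together with Lemma~\ref{lemma_1dim} to trivialize the $\mathcal{T}^u$-factor.

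First I would note that, by dynamical coherence, both $\mathcal{F}^{cs}$ and $\mathcal{F}^{cu}$ exist and subfoliate into $\mathcal{F}^c$, so their traces $\mathcal{T}^s$ and $\mathcal{T}^u$ on $T$ are well-defined foliations of dimensions $\dim E^s$ and $\dim E^u = 1$; they are transverse at $x$ because $T_xT$ is a complement of $E^c(x)$ that decomposes as $E^s(x) \oplus E^u(x)$. After shrinking $T$, this yields a product chart $T \cong T^u \times T^s$ in which $\mathcal{T}^s$-leaves appear as slices $\{u\}\times T^s$ and $\mathcal{T}^u$-leaves as slices $T^u\times\{s\}$.

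Next, I would show that $H_\gamma$ sends leaves of $\mathcal{T}^s$ to leaves of $\mathcal{T}^s$, and similarly for $\mathcal{T}^u$. For $y \in T$ near $x$, the holonomy image $H_\gamma(y)$ lies in the center leaf $L_y$ through $y$, hence in $L^{cs}(y) \cap T$ and in $L^{cu}(y) \cap T$. Each such intersection is a union of leaves of $\mathcal{T}^s$ (resp.\ $\mathcal{T}^u$), so the $\mathcal{T}^s$-leaf through $H_\gamma(y)$ lies in the same center-stable leaf as the $\mathcal{T}^s$-leaf through $y$, and symmetrically for $\mathcal{T}^u$. In the product chart, $\mathcal{T}^s$-leaves are labelled by their $u$-coordinate and $\mathcal{T}^u$-leaves by their $s$-coordinate, so the $u$-coordinate of $H_\gamma(u,s)$ depends only on $u$ and the $s$-coordinate only on $s$. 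This yields $H_\gamma(u,s) = (H^u_\gamma(u), H^s_\gamma(s))$, and $H^u_\gamma$ (resp.\ $H^s_\gamma$) is recognized as the holonomy of $\mathcal{F}^{cs}$ (resp.\ $\mathcal{F}^{cu}$) along $\gamma$ on a one-dimensional (resp.\ $\dim E^s$-dimensional) transversal.

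For the second claim, assume $E^u$ is oriented. Since $E^u$ is complementary to $T\mathcal{F}^{cs}$ and is one-dimensional, the foliation $\mathcal{F}^{cs}$ is transversely orientable, and every holonomy homeomorphism of $\mathcal{F}^{cs}$ along any loop (in particular $H^u_\gamma$) must preserve the transverse orientation. Being a factor of the finite-order homeomorphism $H_\gamma$, the germ $H^u_\gamma$ also has finite order. A periodic orientation-preserving germ of self-homeomorphism of an interval fixing the origin is necessarily trivial by Lemma~\ref{lemma_1dim}, so $H^u_\gamma \equiv \id$. The chief subtlety of the plan lies in the product-decomposition step: one must check that $H_\gamma(y)$, although it need not lie in the same connected component of $L^{cs}(y) \cap T$ as $y$ (witness the Bonatti--Wilkinson Example~\ref{example_bundle}), nevertheless lies in a $\mathcal{T}^s$-leaf whose label depends \emph{only} on the $\mathcal{T}^s$-leaf of $y$, and likewise for $\mathcal{T}^u$. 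This compatibility is what promotes the leaf-preserving bijection to a genuine cartesian product; once it is secured, Lemma~\ref{lemma_1dim} finishes the argument.
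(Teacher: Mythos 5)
Your proposal follows the paper's own proof in essentially every step: dynamical coherence yields the local product structure on $T$ via the traces $\mathcal{T}^s$ and $\mathcal{T}^u$, the holonomy splits as $H^u_{\gamma}\times H^s_{\gamma}$ with the factors identified as the $\mathcal{F}^{cs}$- and $\mathcal{F}^{cu}$-holonomies along $\gamma$, and transverse orientability of $\mathcal{F}^{cs}$ together with finiteness of the center holonomy group and Lemma~\ref{lemma_1dim} force $H^u_{\gamma}\equiv \id$. The compatibility you flag at the end is exactly what the paper settles (at the level of germs) by observing that the holonomy is obtained by continuing plaque chains along $\gamma$ and plaques of $\mathcal{F}^c$ lie inside plaques of $\mathcal{F}^{cs}$ and $\mathcal{F}^{cu}$, so that $H_{\gamma}|_{T^u_x}$ and $H_{\gamma}|_{T^s_x}$ coincide with the respective sub-foliation holonomies; your route is the same, only this verification should be phrased via local plaque chains rather than via the global intersections $L^{cs}(y)\cap T$.
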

\begin{proof}[Proof of Lemma~\ref{lemma_product}]
Let $\gamma \in \pi_1\left(L,x\right)$ be an arbitrary closed path. The path $\gamma$ is tangent to a leaf of $\mathcal{F}^{cs}$ and to a leaf of $\mathcal{F}^{cu}$ as the center foliation $\mathcal{F}^c$ is subordinate to both foliations. As transversal for $H^{u}_{\gamma}$ we can choose $T^u_x$, for $H^{s}_{\gamma}$ in an analogous way $T^s_x$.
As $f$ is dynamically coherent and $\mathcal{T}^u$ and $\mathcal{T}^s$ are induced transversal foliations on $T$, the holonomy $H_{\gamma}|_{T^u_x}$ coincides with $H^{u}_{\gamma}$, in an analogous way the holonomy map $H_{\gamma}|_{T^s_x}$ coincides with $H^{s}_{\gamma}$. Locally, the transverse foliations $\mathcal{T}^s$ and $\mathcal{T}^u$ induce a product structure on $T$. Therefore $H_{\gamma}$ is the cartesian product of $H^{u}_{\gamma} \times H^{s}_{\gamma}$ and preserves the foliations $\mathcal{T}^u$ and $\mathcal{T}^s$. If $E^u$ is oriented, then $\mathcal{F}^{cs}$ is a transversely orientable codimension-1 foliation and every holonomy homeomorphism $H^{u}_{\gamma}:T^u_x \rightarrow T^u_x$ is conjugate to the identity due to Lemma \ref{lemma_1dim}. 
\end{proof}
\begin{rem}
The first statement of Lemma~\ref{lemma_product} that every holonomy map is a cartesian product of the stable and unstable holonomy restricted to the respective center leaf is equally true for all codimensions of a uniformly compact center foliation.
\end{rem}
In the following proofs we consider sets of center leaves with a maximal holonomy group, and it is crucial for the proofs to know that such sets are $f$-invariant. Hence, we need the following quite immediate property of the holonomy group: 
\begin{lemma}
Let $\Hol(L,x)$ be a finite holonomy group of a compact leaf $L \in \mathcal{F}^c$ through $x \in M$. 
Let $H_{\gamma}\in \Hol(L)$ be a periodic holonomy homeomorphism. Then the period of $H_{\gamma}:T \rightarrow T$ is constant along an orbit of $f$, i.e. the period of $H_{f \circ \gamma}$ is equal to the period of $H_{\gamma}$. Every holonomy group $\Hol(f^k(L),f^k(x))$ for $k \in \mathbb{Z}$ is isomorphic to the holonomy group $\Hol(L,x)$ of $L$. 
\label{lemma_period}
\end{lemma}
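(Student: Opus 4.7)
The plan is to exploit the naturality of holonomy under the action of the diffeomorphism $f$. Since $\mathcal{F}^c$ is $f$-invariant, $f$ carries center leaves onto center leaves, and since $f$ is a $C^1$-diffeomorphism it sends any smooth disk $T$ transverse to $L$ at $x$ onto a smooth disk $f(T)$ transverse to $f(L)$ at $f(x)$. Moreover, $f|_L : L \to f(L)$ is a diffeomorphism, so composition with $f$ induces a well-defined and invertible map $\gamma \mapsto f\circ\gamma$ which descends to a group isomorphism $f_\ast : \pi_1(L,x) \to \pi_1(f(L),f(x))$.

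The key observation I would establish next is the conjugation identity
\[
H_{f\circ\gamma} \;=\; (f|_T) \circ H_\gamma \circ (f|_T)^{-1}
\]
at the level of germs of transversal homeomorphisms at $f(x)$. This is proved leaf by leaf: for a nearby center leaf $L'$ close to $L$, the holonomy $H_\gamma$ sends $T\cap L'$ to another point of $T\cap L'$ by following $\gamma$ inside $L'$; applying $f$ carries this whole construction to a path inside $f(L')$ that is precisely $f\circ\gamma$ transported to the nearby leaf $f(L')$. Since the transported path in $f(L')$ is unique up to the local product structure of $\mathcal{F}^c$ near $f(L)$, the germ on $f(T)$ obtained by pushing forward $H_\gamma$ via $f$ agrees with the holonomy germ $H_{f\circ\gamma}$ computed intrinsically on $f(T)$.

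Once the identity $H_{f\circ\gamma}=f\circ H_\gamma\circ f^{-1}$ is in hand, both statements of the lemma follow immediately. Conjugation by the homeomorphism $f|_T$ preserves the order of any element of finite order, so $H_\gamma$ and $H_{f\circ\gamma}$ have the same period; iterating the argument using $f^k$ in place of $f$ yields the same conclusion for every $k\in\mathbb{Z}$. For the second assertion, the map sending the germ $H_\gamma$ to the germ $H_{f\circ\gamma}$ is the composition of the group isomorphism $f_\ast$ on fundamental groups with the quotient to holonomy germs, and its inverse is provided by the analogous conjugation by $f^{-1}$. Hence it descends to a group isomorphism $\Hol(L,x)\cong\Hol(f(L),f(x))$, and induction on $k$ gives the isomorphism with $\Hol(f^k(L),f^k(x))$ for every $k\in\mathbb{Z}$.

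The only non-routine point I anticipate is verifying the naturality identity at the level of germs rather than merely up to isotopy. Because holonomy is defined as a germ class of transversal homeomorphisms, one must check that the identification does not depend on the choice of transversal disk, on the representative of $[\gamma]$, or on the diameter of the neighborhood on which $H_\gamma$ is realized; this is standard but must be carried out carefully using the local product structure of $\mathcal{F}^c$ in a tubular neighborhood furnished by the Reeb Stability Theorem, which applies here thanks to the finiteness of $\Hol(L,x)$.
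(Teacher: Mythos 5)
Your proposal is correct and follows essentially the same route as the paper: both rest on the conjugation identity $H_{f\circ\gamma}=f\circ H_{\gamma}\circ f^{-1}$, derived from the $f$-invariance of the center foliation, from which the preservation of periods and the isomorphism $\Hol(L,x)\cong\Hol(f^k(L),f^k(x))$ follow at once. Your additional care about well-definedness at the level of germs is a slightly more detailed elaboration of the same argument, not a different method.
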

\begin{proof}[Proof of Lemma~\ref{lemma_period}]
Let $H_{\gamma} \in \Hol(L,x)$ be a periodic holonomy homeomorphism. Let $k\in \mathbb{N}$ be the period of $H_{\gamma}$. The path $f \circ \gamma$ generates a holonomy homeomorphism of $L(f(x))$. Because of the invariance of the foliations we conclude that $H_{f\circ \gamma} = f \circ H_{\gamma} \circ f^{-1}: f(T) \rightarrow f(T)$. Hence, the period of $H_{f\circ \gamma}$ is equally $k$. This induces a bijection between the holonomy homeomorphisms $H_{\gamma} \in \Hol(L,x)$ and $H_{f \circ \gamma} \in \Hol(f(L),f(x))$. Accordingly, the order of the whole holonomy group is constant along a $f$-orbit.  
\end{proof}

Hence, we can say that $f$ is equivariant under $\Hol$, i.e. $f(H(y))= H(f(y))$ for any $H \in \Hol$ and $y \in T$. \\
We define the following set $A$ of all points $x \in M$ whose center leaves have a holonomy group $\Hol\left(L,x\right)$ of maximal order. 
\begin{lemma}
Let $f: M \rightarrow M$ be a partially hyperbolic $C^1$-diffeomorphism with a uniformly compact $f$-invariant center foliation. Then the following hold: 
\begin{enumerate}
\item The order of the center holonomy groups is uniformly bounded. 
\item If $E^u$ is one-dimensional and oriented, then the set $A$ of center leaves having holonomy groups of maximal order is closed, $f$-invariant and $\mathcal{F}^{cu}$-saturated. 
\end{enumerate}
\label{lemma_A}
\end{lemma}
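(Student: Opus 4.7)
For (1), the Generalized Reeb Stability Theorem provides, at each $L \in \mathcal{F}^c$, a saturated normal neighborhood $V(L)$ inside which every center leaf has holonomy order dividing $|\Hol(L,x)|$. By compactness of $M$, finitely many such neighborhoods $V(L_1), \dots, V(L_N)$ cover $M$, uniformly bounding the holonomy order by $\max_i |\Hol(L_i, x_i)|$.

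For (2), the $f$-invariance of $A$ is immediate from Lemma~\ref{lemma_period}: the order of the holonomy group is constant along every $f$-orbit. Closedness follows from Reeb Stability as well. If $L_n \in A$ and $L_n \to L$ in the Hausdorff metric, then $L_n \subset V(L)$ for $n$ large, so $|\Hol(L_n,\cdot)|$ divides $|\Hol(L,x)|$; maximality of $|\Hol(L_n,\cdot)|$ then forces $|\Hol(L,x)|$ to be maximal as well, giving $L \in A$.

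The main step is the $\mathcal{F}^{cu}$-saturation. Fix $L \in A$, $x \in L$, and a transversal $T$ at $x$ locally identified with the product $T^u_x \times T^s_x$. By Lemma~\ref{lemma_product} together with the orientability of $E^u$, every element of $\Hol(L,x)$ acts on $T$ as $\id_{T^u_x} \times H^s_\gamma$; in particular, $\Hol(L,x)$ fixes $T^u_x$ pointwise. Locally $L^{cu}(L) \cap T = T^u_x$, so the center leaves in $V(L) \cap L^{cu}(L)$ are exactly those through points of $T^u_x$. For any such $y \in T^u_x$ the $\Hol(L,x)$-orbit of $y$ is $\{y\}$, so Reeb Stability implies that the center leaf $L_y$ is a one-sheeted cover of $L$ and $|\Hol(L_y,y)| = |\Hol(L,x)|$. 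Hence every center leaf in $V(L) \cap L^{cu}(L)$ lies in $A$. Since $L^{cu}(L)$ is connected, any $L' \subset L^{cu}(L)$ can be joined to $L$ by a path covered by finitely many Reeb neighborhoods, and the maximality of holonomy propagates chart by chart along the path to yield $L' \in A$.

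The chief obstacle I expect is precisely this $\mathcal{F}^{cu}$-saturation. The orientability of $E^u$ is the crucial input that forces $\Hol(L,x)$ to fix $T^u_x$ pointwise, enabling the local-to-global propagation along $L^{cu}(L)$; without it, $A$ would in general only be closed and $f$-invariant but not $\mathcal{F}^{cu}$-saturated.
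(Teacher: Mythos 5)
Your proof is correct and takes essentially the same route as the paper: Reeb stability plus compactness for the uniform bound, constancy of the holonomy order along orbits for $f$-invariance, upper semicontinuity for closedness, and the product decomposition $H_{\gamma}=\id\times H^s_{\gamma}$ from Lemma~\ref{lemma_product} (with $E^u$ oriented) to show each $y\in T^u_x$ is fixed by the whole group, so its leaf has the same (maximal) holonomy order, giving local $\mathcal{F}^{cu}$-saturation. The only cosmetic difference is the globalization step, where you propagate chart by chart along paths while the paper invokes a finite cover by tubular neighborhoods; both are the same local-to-global argument.
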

\begin{proof}
The first item is a direct consequence of the Reeb Stability Theorem which gives us the semi-continuity of the map $x \mapsto \left|\Hol(L_x,x)\right|$. So we can show the second item: 
First of all, the $f$-invariance and closedness of $A$ is obvious. It remains to show that $A$ is saturated by unstable leaves:\\
Since $\mathcal{F}^{cs}$ is a transversely orientable codimension-$1$ foliation every holonomy homeomorphism maps $T^u_x$ trivially onto itself. The map $H^{u}_{\gamma}$ coincides with $H_{\gamma}|_{T^u_x}$ and therefore the holonomy group $\Hol(L_y,y)$ restricted to $T^u_x$ for every $y \in T^u_x$ is trivial.
Choose the transversal $T$ at $x$ to $L_x$ such that $T \subset V$ where $V$ is the neighborhood of Theorem \ref{thm1}. Then for every $y \in T^u_x$ it is $\left|\Hol\left(L_y,y\right)\right| \leq \left|\Hol\left(L_x,x\right)\right|$ and $p: L_y  \rightarrow L_x$ is a covering space. For every $y \in V$ the holonomy group $\Hol(L_y,y)$ equals the isotropy subgroup of $\Hol(L_x,x)$ for $y$. As every point $y \in T^u_x$ is fixed by the entire group $\Hol(L_x,x)$, both holonomy groups coincide and $\left|\Hol(L_y,y)\right| = \left|\Hol(L_x,x)\right|$ is implied. \\
As $M$ is compact and the center foliation is uniformly compact, we can find a finite cover of $M$ (in particular of $L^{cu}_x$) ob tubular neighborhoods $p_i: V_i \rightarrow L_i$ as above such that in every neighborhood $V_i$ we have $L^{cu}_{loc}(x) \subset A$ for every $x \in A$. This implies that $L^{cu}_x \subset A$.  

\end{proof}





\subsection{Trivial center holonomy for one-dimensional unstable bundle.}
We can now directly show that the holonomy of every center leaf is trivial if the unstable bundle $E^u$ is one-dimensional and oriented: 
\begin{thm}[Trivial center holonomy]\label{t.trivial}
Let $f: M \rightarrow M$ be a partially hyperbolic $C^1$-diffeomorphism on a compact smooth connected manifold $M$. Assume that the center foliation $\mathcal{F}^c$ is an $f$-invariant uniformly compact foliation and $E^u$ is one-dimensional. Then the following statements hold: 
\begin{enumerate}
\item If $E^u$ is oriented, then the center holonomy is trivial. 
\item If $E^u$ is non-orientable, then center leaves with non-trivial holonomy are isolated, and for every non-trivial holonomy homeomorphism $H$ we have $H^2=\id$. 
\end{enumerate}
\end{thm}
Recall that it is shown in \cite{BB12} that $f$ is dynamically coherent under our current assumptions.
Consider a local transversal $T$ at $x$ to $L_x \in \mathcal{F}^c$ and the foliations $\mathcal{T}^s$ and $\mathcal{T}^u$ induced by $\mathcal{F}^{cs}$ and $\mathcal{F}^{cu}$ respectively on $T$. 
We denote by $A$ as above the set of all center leaves having a holonomy group of maximal order. 
\subsubsection{One-dimensional oriented unstable bundle.}
\begin{lemma}
Let $f: M \rightarrow M$ be a centrally transitive partially hyperbolic $C^1$-diffeomorphism with an $f$-invariant compact center foliation. Assume $\dim E^u =1$ and $E^u$ to be oriented. Then the holonomy of any center leaf is trivial.
\label{lemma_trivial_holonomy}
\end{lemma}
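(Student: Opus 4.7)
My plan is to argue by contradiction. Assume that some center leaf has non-trivial holonomy, and let $n \geq 2$ denote the maximal order of a center holonomy group; set $A := \{L \in \mathcal{F}^c : |\Hol(L)| = n\}$. Lemma~\ref{lemma_A} then yields that $A$ is non-empty, closed, $f$-invariant and $\mathcal{F}^{cu}$-saturated, and Lemma~\ref{lemma_product} combined with the orientability of $E^u$ ensures that every holonomy map $H_\gamma$ of a leaf $L \in A$ factors as $\mathrm{id}_{T^u} \times H^s_\gamma$, so that the non-trivial action is entirely confined to the stable transversal $T^s$.

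The overall strategy is to force $A = M$ and then reach a contradiction with the classical theorem (\cite{H77}, \cite{EMT77}, cited in Section~\ref{s.background}) that the set of leaves with trivial holonomy is residual in $M$. Once $A = M$, every center leaf would carry a holonomy group of the same order $n \geq 2$, which is incompatible with a dense $G_\delta$ of trivial-holonomy leaves.

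To obtain $A = M$, I would pick $L_A \in A$ and exploit the completeness of the center-unstable foliation from \cite{BB12}: the center-unstable leaf $L^{cu}(L_A)$ coincides with the unstable saturation $\mathcal{F}^u(L_A) := \bigcup_{x \in L_A} W^u(x)$, and the $\mathcal{F}^{cu}$-saturation of $A$ gives $\mathcal{F}^u(L_A) \subset A$. By $f$-invariance,
$$\bigcup_{k \in \mathbb{Z}} f^k(\mathcal{F}^u(L_A)) = \bigcup_{k \in \mathbb{Z}} \mathcal{F}^u(f^k L_A) \subset A.$$
Projecting to $M/\mathcal{F}^c$, this is an $F$-invariant union of one-dimensional unstable leaves, and I would argue that central transitivity of $F$ together with the local product structure of $F$ at the (residual, hence dense) set of leaves with trivial holonomy forces this union to be dense in $M/\mathcal{F}^c$. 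Since $A$ is closed, density then yields $A = M$, and step~3 above finishes the contradiction.

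The main obstacle is precisely this density step. Central transitivity does not automatically provide a dense $F$-orbit running through $A$, and leaves of $A$ can have empty interior in the quotient. A workable route is to consider the open complement $A^c$: it is $f$-invariant and $\mathcal{F}^{cu}$-saturated, so central transitivity forces $A^c$ to be dense whenever it is non-empty, which makes $A$ nowhere dense. One then has to rule out a nowhere dense, closed, $f$-invariant, $\mathcal{F}^{cu}$-saturated non-empty subset of $M$ by a Newhouse-type accumulation argument tailored to the possibly singular quotient dynamics, very much in the spirit of the Principal Lemma~\ref{lemma_principal} of Section~\ref{s.transitive}. Once this accumulation is in place, residuality of trivial-holonomy leaves immediately rules out $n \geq 2$ and the proof is complete.
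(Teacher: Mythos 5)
Your overall skeleton --- define the set $A$ of center leaves with holonomy group of maximal order, invoke Lemma~\ref{lemma_A} (with $E^u$ oriented) to get that $A$ is closed, $f$-invariant and $\mathcal{F}^{cu}$-saturated, force $A=M$, and then contradict the genericity of trivial-holonomy leaves --- is exactly the strategy of the paper. However, the step you yourself flag as ``the main obstacle'', namely producing density inside $A$, is precisely the entire content of the paper's proof, and you do not supply it. Neither your first route (density of $\bigcup_{k}f^k(\mathcal{F}^u(L_A))$ ``via the local product structure at trivial-holonomy leaves'') nor your fallback (ruling out a non-empty, nowhere dense, closed, $f$-invariant, $\mathcal{F}^{cu}$-saturated set by ``a Newhouse-type accumulation argument in the spirit of Lemma~\ref{lemma_principal}'') is actually carried out; both are left as declarations of intent. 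The fallback in particular is not a routine consequence of the Principal Lemma, which is formulated for repellers of the chain-recurrent decomposition, not for arbitrary nowhere dense $\mathcal{F}^{cu}$-saturated sets, so as written the argument has a genuine gap exactly at its crux. (Your observation that a non-empty open invariant $A^c$ must be dense, hence $A$ nowhere dense, is correct but by itself brings you no closer to a contradiction: the density has to be produced \emph{inside} $A$.)

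The missing idea is a short asymptoticity argument. Central transitivity gives a center leaf $L_0$ with dense forward orbit. Given any $L \subset A$, choose $k\geq 0$ with $f^kL_0$ so close to $L$ that $\mathcal{F}^s_{loc}(f^kL_0)\cap\mathcal{F}^u_{loc}(L)$ contains a center leaf $L_1$; since $L_1$ lies in the center-unstable leaf through $L$ and $A$ is $\mathcal{F}^{cu}$-saturated, we get $L_1\subset A$. Under forward iteration $f^nL_1$ and $f^{n+k}L_0$ are asymptotic (the stable distance is contracted), so the forward orbit of $L_1$ is dense because that of $f^kL_0$ is. As $A$ is closed and $f$-invariant, this yields $A=M$, and the genericity of leaves with trivial holonomy (\cite{EMT77}) then forces the maximal holonomy order to be one --- no contradiction hypothesis $n\geq 2$ and no separate Baire-type argument on the quotient are needed. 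This is the lemma-level step your proposal is missing.
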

\begin{proof}[Proof of Lemma~\ref{lemma_trivial_holonomy}]
Using the assumption of an oriented unstable bundle, Lemma~\ref{lemma_A} implies that the set $A$ of center leaves having a holonomy group of maximal order is closed, $f$-invariant and $\mathcal{F}^{cu}$-saturated. As $f$ is centrally transitive, there exists a center leaf $L_0$ with a dense forward orbit $\left\{f^n(L_0)\right\}_{n \geq 0}$. Choose any center leaf $L \subset A$. Then there exists $k \geq 0$ such that $f^kL_0$ is sufficiently close to $L$ and the local stable leaf $\mathcal{F}^s_{loc}(f^kL_0)$ intersects the local unstable leaf $\mathcal{F}^u_{loc}(L)$ in a leaf $L_1$ which lies inside $A$ as $A$ is $\mathcal{F}^{cu}$-saturated. Under $f^n$ the iterates $f^nL_1$ and $f^{n+k}L_0$ are asymptotic and since the forward orbit of $f^kL_0$ is dense, so is that of $L_1$. The closedness of $A$ then implies $A=M$. Because the generic center leaf has trivial holonomy (see \cite{EMT77}), we can conclude that all center leaves have trivial holonomy.     
 \end{proof}
\subsubsection{One-dimensional non-orientable unstable bundle.} 

As a direct consequence of Lemma~\ref{lemma_trivial_holonomy} we can show that the order of any center holonomy group is at most $2$: 
\begin{lemma}\label{l.group}
Let $f:M \rightarrow M$ be a partially hyperbolic $C^1$ diffeomorphism having a uniformly compact center foliation. Assume that $E^u$ is one-dimensional and not orientable. Then the group order of any center holonomy group is at most $2$. Further, every non-trivial holonomy homeomorphism reverses the orientation of $E^u$. 
\end{lemma}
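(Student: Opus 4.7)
The plan is to lift everything to the orientation double cover of $E^u$, apply Lemma~\ref{lemma_trivial_holonomy} upstairs, and then push the information back down through the deck involution.

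Let $\pi: \tilde M \to M$ be the $2$-sheeted orientation cover of $E^u$ with free deck involution $\tau$. The lifted diffeomorphism $\tilde f$ is partially hyperbolic with uniformly compact center foliation $\tilde{\mathcal F}^c$ and one-dimensional oriented unstable bundle $\tilde E^u$; Theorem~\ref{thm_transitive} makes it centrally transitive, so Lemma~\ref{lemma_trivial_holonomy} gives that $\tilde{\mathcal F}^c$ has trivial holonomy. Fix $L\in\mathcal F^c$ and $x\in L$. The classifying homomorphism $\omega:\pi_1(M,x)\to\mathbb{Z}/2$ of the cover restricts to a homomorphism $\omega_L$ on $\pi_1(L,x)$ whose kernel $K$ has index at most $2$. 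For any $\gamma\in K$ the loop lifts to a loop $\tilde\gamma$ in a single connected component of $\pi^{-1}(L)$, and because $\pi$ is a local diffeomorphism carrying the lifted foliations $\tilde{\mathcal F}^c$ and $\tilde{\mathcal F}^{cs}$ to their downstairs counterparts, $H_\gamma$ is conjugate via $\pi$ to $H_{\tilde\gamma}=\id$. Thus the holonomy of $L$ factors through $\pi_1(L,x)/K$, proving $|\Hol(L,x)|\le 2$.

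To see that any non-trivial element of $\Hol(L,x)$ reverses the orientation of $E^u$, assume $\Hol(L,x)=\mathbb{Z}/2$; then $\omega_L\neq 0$ and $\pi^{-1}(L)=\tilde L$ is a single leaf, with $\tau|_{\tilde L}$ a free involution. Triviality of the holonomy upstairs combined with the Reeb Stability Theorem provides a global product trivialisation of a saturated neighborhood $V(\tilde L)\cong\tilde L\times F$, where the transversal $F$ splits locally as $F^s\times F^u$ with $F^u$ tangent to $\tilde E^u$ at $\tilde x$. The involution $\tau$ preserves $V(\tilde L)$ and the foliation, hence takes the product form $\tau(\tilde p, f)=(\tau_{\tilde L}(\tilde p),\tau_F(f))$, and the quotient $V(L)=V(\tilde L)/\tau$ realises a generator of $\Hol(L,x)$ precisely as $\tau_F$ acting on the transversal $F$. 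By Lemma~\ref{lemma_product} this splits as $\tau_F^u\times\tau_F^s$. Now $\tau$ reverses the canonical orientation of $\tilde E^u$ by the very definition of the orientation cover, while parallel transport inside $\tilde L$ preserves $\tilde E^u$-orientation (the bundle is oriented and the trivialisation is holonomy-free); composing, $\tau_F^u=H^u_\gamma$ reverses the orientation of $F^u$ and hence of $E^u$ at $x$, as required.

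The main technical obstacle is the covering-space book-keeping: one must verify both that the Reeb stable neighborhood of $L$ is the quotient of the one of $\tilde L$ under $\tau$, and that under this identification the holonomy homomorphism of $L$ is realised exactly by the residual $\tau$-action on the transversal. Once this is established, Lemma~\ref{lemma_product} combined with the orientation-reversing nature of $\tau$ on $\tilde E^u$ forces the conclusion with essentially no further computation.
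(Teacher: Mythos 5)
Your proposal is correct and takes essentially the same route as the paper: pass to the orientation double cover, use Theorem~\ref{thm_transitive} and Lemma~\ref{lemma_trivial_holonomy} to get trivial center holonomy upstairs, conclude that $\Hol(L,x)$ is a quotient of $\pi_1(L,x)/K$ with $K$ the kernel of the orientation class of $E^u$ restricted to $L$, and identify the possible non-trivial element with holonomy along a loop on which $E^u$ is non-orientable, which reverses the orientation of $E^u$. The only overstatements are the assertions that $\tau$ preserves $V(\tilde L)$ and that it ``takes the product form'' $\tau_{\tilde L}\times\tau_F$ --- neither is automatic, but neither is needed: choosing $V(\tilde L)$ as the preimage of a Reeb neighborhood of $L$ makes it $\tau$-invariant, and only the induced germ of $\tau$ on the local leaf space at the point corresponding to $\tilde L$ is required, which amounts to the paper's shorter observation that holonomy along an $E^u$-orientation-reversing loop must reverse the orientation of the unstable transversal.
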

\begin{proof}[Proof of Lemma~\ref{l.group}]
Let $p_u: \overline{M}\rightarrow M$ be the $2$-sheeted cover making $E^u$ oriented. Then Lemma~\ref{lemma_trivial_holonomy} implies that the lifted center foliation $p_u^{-1}\mathcal{F}^c$ has only trivial center holonomy. Let $L$ be a center leaf in $M$. Then there are two possibilities for the preimage $p_u^{-1}L$: \begin{enumerate}
\item The lift $p_u^{-1}L$ has two connected components, both diffeomorphic via $p_u$ to $L$. This means that every closed path $\gamma \in \pi_1(L)$ is uniquely mapped to a closed path $\tilde{\gamma} \subset p_u^{-1}L$ by fixing the starting point of $\tilde{\gamma}$. The fundamental group of every connected component $\tilde{L}$ of $p_u^{-1}(L)$ is therefore isomorphic to the fundamental group of $L$, and as the center holonomy of $\tilde{L}$ is trivial, so is the center holonomy of $L$. 
\item The lift $p_u^{-1}(L)$ has one connected component $\tilde{L}$. The map $p_u|_{\tilde{L}}:\tilde{L} \rightarrow L$ is therefore a covering of order $2$; it is the covering which makes $E^u$ along $L$ oriented. Consequently, $\pi_1(\tilde{L})$ is isomorphic to a subgroup of $\pi_1(L)$ of index $2$. The center holonomy group $\Hol(\tilde{L})$ of $\tilde{L}$ is trivial. Consider now the center holonomy group $\Hol(L)$ of $L$ which is isomorphic to $\pi_1(L)/\ker(H)$ where $H$ is the group homomorphism $\pi_1(L) \rightarrow \Homeo(T)$. Let $\gamma \in \ker(H)$ be a closed path, then $E^u$ is oriented along $\gamma$. Therefore it corresponds to a closed path $\tilde{\gamma} \subset \tilde{L}$. On the other hand, consider $p_u\tilde{\gamma}$ for $\tilde{\gamma} \in \pi_1(\tilde{L})$: as the center holonomy of $\tilde{L}$ is trivial, it lies inside the kernel $\ker(H)$. Accordingly, we can identify $p_{u*}(\pi_1(\tilde{L}))$ with $\ker(H)$. Recall that $\Hol(L)=\pi_1(L)/\ker(H)=\pi_1(L)/p_{u*}(\pi_1(\tilde{L}))$. With this we can conclude that the center holonomy group $\Hol(L)$ of $L$ has order $2$ and is generated by a closed path $\gamma$ where $E^u$ is not oriented along $\gamma$, so the holonomy homeomorphism $H_{\gamma}$ generated by such a $\gamma$ reverses the orientation of $E^u$.
\end{enumerate}
\end{proof}  

This result helps us to prove the second item of Theorem~\ref{t.trivial}. Before we start with the proof, we recall that $A$ denotes the $f$-invariant closed set of center leaves having a holonomy group of maximal order which is equivalent thanks to Lemma~\ref{l.group} to having a non-trivial holonomy group. Let $L \subset A$ be a center leaf and $T$ a transversal at $x \in L$ such that the holonomy group $\Hol(L)$ is well-defined in its action on $T$. We state the following easy fact:
\begin{claim}\label{c.setA} 
Let $L$ and $T$ be as above. Then for every nearby center leaf $L' \subset A$ its intersection $L' \cap T$ is a fixed point of the holonomy group $\Hol(L)$. 
\end{claim}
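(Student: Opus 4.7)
\medskip

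\noindent\textbf{Proof plan for Claim~\ref{c.setA}.} The plan is to read the claim directly off the Reeb Stability Theorem (Theorem~\ref{thm1}) applied to the leaf $L$, combined with the fact that by Lemma~\ref{l.group} the holonomy group of every leaf in $A$ has the same (maximal) order as $\Hol(L)$.

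First I would shrink the transversal so that $T \subset V$, where $p : V \to L$ is the saturated tubular neighborhood furnished by Reeb stability; then $V$ is a fibre bundle over $L$ with finite structure group $\Hol(L)$ acting continuously on $T$ and fixing $x$. Any nearby center leaf $L'$ sits inside $V$ as a covering $p\bigr|_{L'} : L' \to L$ of some number of sheets $k \leq |\Hol(L)|$, and Theorem~\ref{thm1} tells us that
\[
|\Hol(L')| \;=\; \frac{|\Hol(L)|}{k}.
\]
The intersection $L' \cap T$ consists of exactly the $k$ points in the fibre $p^{-1}(x) \cap L'$, and the $\Hol(L)$-action on $T$ permutes the points of $L' \cap T$ transitively, since leaves are $\Hol(L)$-invariant.

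Now I would invoke $L' \subset A$: by definition of $A$ and by Lemma~\ref{l.group}, the holonomy group of $L'$ has the same maximal order as that of $L$, so $|\Hol(L')| = |\Hol(L)|$ and hence $k = 1$. Therefore $L' \cap T$ reduces to a single point $y$, and, since the action of $\Hol(L)$ on $L' \cap T$ is transitive, $y$ is fixed by every element of $\Hol(L)$. This is exactly the statement of the claim.

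There is essentially no obstacle here; the only point requiring a little care is to check that the holonomy group of $L'$ arising as the isotropy subgroup of $\Hol(L)$ at $y$ really is the group $\Hol(L',y)$ in our sense (i.e.\ the holonomy of $L'$ as a leaf of $\mathcal{F}^c$). This is part of the statement of Theorem~\ref{thm1}, so once the Reeb setup is in place the argument is just counting.
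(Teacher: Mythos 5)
Your argument is correct and is essentially the paper's own proof: both apply the Generalized Reeb Stability Theorem to $L$, note that the covering degree of $p|_{L'}:L'\to L$ equals $\left|\Hol(L)\right|/\left|\Hol(L')\right|$, conclude from $L,L'\subset A$ that this degree is one, and deduce that the single point $L'\cap T$ is fixed because $\Hol(L)$ preserves $L'\cap T$. The extra remarks on transitivity of the $\Hol(L)$-action on $L'\cap T$ and the citation of Lemma~\ref{l.group} are harmless but not needed beyond the definition of $A$.
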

\begin{proof}[Proof of Claim~\ref{c.setA}]
The leaf $L'$ is sufficiently close to $L$, so with the Reeb Stability Theorem the map $p:L' \rightarrow L$ is a covering map. The degree of this covering equals the quotient of $\left|\Hol(L)\right|$ and $\left|\Hol(L')\right|$. As $L'$ and $L$ are both contained in $A$, this quotient is one and the covering map is in fact a diffeomorphism. So $L'$ intersects $T$ in exactly one point which is therefore fixed by $\Hol(L)$. 
\end{proof}  
For simplicity, we say therefore that leaves inside $A$ are fixed points of the holonomy group. 

\begin{lemma}
Let $f: M \rightarrow M$ be a partially hyperbolic $C^1$-diffeomorphism with an $f$-invariant uniformly compact center foliation. Assume $\dim E^u =1$. Then the center leaves with non-trivial holonomy are isolated, and for every non-trivial holonomy homeomorphism $H$ we have $H^2=\id$. 
\label{lemma_isolated_2}
\end{lemma}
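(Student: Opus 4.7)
The plan is to split the lemma into its two assertions and reduce most of the work to the non-oriented case by lifting to the oriented $2$-cover $\pi:\overline M\to M$. If $E^u$ is already oriented, then by Theorem~\ref{thm_transitive} the system is centrally transitive and Lemma~\ref{lemma_trivial_holonomy} gives that no center leaf has non-trivial holonomy, so the statement is vacuous. Assume therefore that $E^u$ is non-orientable. By Lemma~\ref{l.group} every non-trivial holonomy homeomorphism already has order $2$, so $H^2=\id$ is immediate; the remaining content is isolation.

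Next I would analyze the set $A$ locally. On a transversal $T=T^u\times T^s$ at $x\in L$, Lemma~\ref{lemma_product} splits the generator as $H=H^u\times H^s$, and by Lemma~\ref{l.group} the factor $H^u$ is an orientation-reversing involution of the one-dimensional interval $T^u$. Lemma~\ref{lemma_1dim} (or Corollary~\ref{c.rotation}) then forces $H^u$ to be conjugate to a reflection, with $\{x\}$ as its only fixed point. Combined with Claim~\ref{c.setA}, this yields $A\cap T\subset\{x\}\times\mathrm{Fix}(H^s)$, so all nearby $A$-leaves already lie inside the center-stable leaf $L^{cs}(L)$. Isolation of $L$ in $A$ is therefore equivalent to $\mathrm{Fix}(H^s)=\{x\}$.

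To obtain $\mathrm{Fix}(H^s)=\{x\}$ I would lift to the oriented $2$-cover $\overline M$ with deck involution $\sigma$. By Remark~\ref{r.oriented} and Theorem~\ref{thm_transitive} the lifted diffeomorphism $\tilde f$ is centrally transitive, so Lemma~\ref{lemma_trivial_holonomy} gives that $\tilde{\mathcal F}^c$ has everywhere trivial holonomy. A leaf $L\in A$ lifts to a single $\sigma$-invariant center leaf $\tilde L$, and in a product neighborhood $\tilde L\times\tilde T$ the involution $\sigma$ descends, via the trivial-holonomy identifications, to an involution $\tau=\tau^u\times\tau^s$ of $\tilde T$ whose fixed points parametrize the nearby $\sigma$-invariant leaves, i.e.\ the lifts of nearby $A$-leaves. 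The analysis of the previous paragraph applied to $\tau$ reduces the problem to showing $\mathrm{Fix}(\tau^s)=\{\tilde x\}$.

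The hard part will be ruling out a positive-dimensional $\mathrm{Fix}(\tau^s)$. Since $\sigma$ is a smooth deck transformation, $\tau^s$ is a $C^1$ involution fixing $\tilde x$, so Bochner's linearization theorem makes $\mathrm{Fix}(\tau^s)$ a smooth submanifold of $\tilde T^s$ through $\tilde x$. If it had positive dimension, then $\tilde A$ would locally contain a nontrivial $\tilde{\mathcal F}^c$-saturated $\tilde f$-invariant sub-lamination inside $\tilde L^{cs}(\tilde L)$; I would aim to contradict this using central transitivity of $\tilde f$ together with the genericity of trivial holonomy \cite{EMT77}, mimicking the closing step in the proof of Lemma~\ref{lemma_trivial_holonomy}, which used $\mathcal F^{cu}$-saturation of $A$ to propagate from one $A$-leaf to all of $M$. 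The subtlety is that $\tilde A$ is saturated only in the stable direction, so the propagation argument must turn the positive-dimensional transverse structure into a genuine open set of $\sigma$-invariant leaves, and this is where I expect the main difficulty of the proof to lie.
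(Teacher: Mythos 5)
Your reduction is on track up to the point where the real work starts, but the step you lean on to do that work fails. The two preliminary reductions --- $H^2=\id$ via Lemma~\ref{l.group}, and the reduction of isolation to showing that the stable factor $H^s_{\gamma}$ has only isolated fixed points (since $H^u_{\gamma}$ is an orientation-reversing involution of an interval with a unique fixed point) --- coincide with the paper's. The gap is your claim that the involution $\tau^s$ induced on a stable transversal in the oriented $2$-cover is $C^1$, so that Bochner's linearization makes its fixed set a submanifold. The deck transformation $\sigma$ is indeed smooth, but $\tau^s$ is not $\sigma$ restricted to a submanifold: it is $\sigma$ followed by the projection along plaques of the lifted center foliation back into the chosen transversal, and that projection (a center holonomy) is only continuous, because the center foliation is merely a continuous foliation with $C^1$ leaves. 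So $\tau^s$, like $H^s_{\gamma}$ itself, is a priori only a periodic homeomorphism of a disk of dimension $\dim E^s$, which may exceed $2$ --- exactly the situation the paper flags (Bing's involution of the $3$-sphere) as the reason one cannot assume fixed sets of holonomy homeomorphisms are submanifolds. Moreover, even granting a submanifold structure, you explicitly leave the decisive step --- ruling out a non-isolated fixed set --- as ``the main difficulty'', so the argument is not completed; the propagation via transitivity that you sketch does not obviously work precisely because, as you note, in the non-orientable case the set $A$ fails to be $\mathcal{F}^{cu}$-saturated.

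The paper's proof at this point is purely dynamical and needs no regularity of $H^s_{\gamma}$: assuming a sequence $L_n \subset \mathcal{F}^s_{loc}(L)$ of leaves fixed by the holonomy accumulates on $L$, it passes to backward iterates; since $f^{-1}$ uniformly expands stable distances, one can choose iterates $f^{-k_n}L_n$ lying in pairwise distinct center stable plaques (Claim~\ref{c.isolated}), all still contained in the compact $f$-invariant set $A$, hence accumulating on some $L_* \in A$. For large $n$ these leaves are fixed points of $\Hol(L_*)$ (Claim~\ref{c.setA}), while a short argument in the oriented $2$-cover (Claim~\ref{c.isolated3}) shows that any leaf fixed by $\Hol(L_*)$ must lie in $\mathcal{F}^s_{loc}(L_*)$; but leaves in distinct center stable plaques cannot all lie in one local stable manifold, a contradiction. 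If you want to salvage your outline, this expansion-plus-compactness mechanism is what has to replace the appeal to Bochner.
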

\begin{proof}[Proof of Lemma~\ref{lemma_isolated_2}]
Let $p_u: \overline{M} \rightarrow M$ be the orientation cover making $E^u$ orientable. Then the center holonomy for the lifted center foliation is trivial with Lemma~\ref{lemma_trivial_holonomy}. 
Let $L$ be a center leaf with non-trivial holonomy group. Then we know with Lemma~\ref{l.group} that its center holonomy group $\Hol\left(L,x\right)$ has order $2$. Consequently, there exists a closed path $\gamma \in \pi_1(L,x)$ with $\gamma(0)=x$ which generates the non-trivial holonomy homeomorphisms, and we denote such a non-trivial holonomy homeomorphism by $H_{\gamma}=H^s_{\gamma}\times H^u_{\gamma}$. Let $T$ be a transversal to $\mathcal{F}^c$ at $x$, and $T^u_x, T^s_x$ the unstable and stable manifold through $x$ induced by the center stable and center unstable foliation on $T$. We already know that the holonomy homeomorphism $H^u_{\gamma}: T^u_x \rightarrow T^u_x$ reverses the orientation of $E^u$ and has $x$ as its only fixed point. For our proof it suffices to show that $H^s_{\gamma}$ has only isolated fixed points. Assume that $H^s_{\gamma}: T^s_x \rightarrow T^s_x$ has non-isolated fixed points, for instance $x$ is non-isolated. So there exists a sequence $\left\{L_n\right\}_{n \in \mathbb{N}} \subset \mathcal{F}^s_{loc}(L)$ of fixed center leaves converging to $L$. This sequence and all its iterates under $f$ are contained in the set $A$. We will now choose a subsequence of backward iterates which lie each one in a distinct center stable plaque: 
\begin{claim}\label{c.isolated}
There exists a sequence $k_n$ with the following property: If $f^{-k_n}L_n$ and $f^{-i}L_j$ for $0\leq i,j \leq k_{n-1}$ are in the same center stable manifold $\mathcal{F}^s(L_n)$, then their distance within the center stable manifold is greater than $1$. In other words, there is a subsequence of backward iterates of $L_n$ whose elements are contained in an infinite number of distinct center stable plaques.  
\end{claim}
\begin{proof}[Proof of Claim~\ref{c.isolated}]
The leaves $L$ and $L_n \subset \mathcal{F}^s_{loc}(L)$ for $n \geq 0$ are center leaves having a maximal holonomy group, so applying Lemma~\ref{l.maximal} we know that every stable leaf inside $\mathcal{F}^s(L)$ intersects $L$ and every $L_n$ exactly once. So, the stable distance between $L$ and every $L_n$ is well defined by taking the supremum of $d^s(z,L^s(z)\cap L)$ over $z \in L_n$. \\
As this distance is uniformly expanded by $f^{-1}$, we can assure that we find iterates $f^{-k_n}$ of $L_n$ such that they lie in distinct center stable plaques. 
\end{proof}
Now we show that the leaves of this subsequence are still fixed points for the holonomy group of its accumulation point: 
\begin{claim}\label{c.isolated2}
The sequence $f^{-k_n}L_n$ has an accumulation point $L_*$. For $k_n$ sufficiently big, the points $f^{-k_n}L_n$ are fixed points of the center holonomy group of $L_*$.
\end{claim}
\begin{proof}[Proof of Claim~\ref{c.isolated2}]
The leaves $f^{-k_n}L_n$ are all contained in the set $A$ which is compact and $f$-invariant. Therefore, there exists an accumulation point $L_*$ inside $A$. 
By Claim~\ref{c.setA} they are fixed points for the holonomy group of $L_*$. 
\end{proof} 
\begin{claim}\label{c.isolated3}
All fixed points of the holonomy group of $L_*$ lie in its local stable manifold $\mathcal{F}^s_{loc}(L_*)$. 
\end{claim}
\begin{proof}[Proof of Claim~\ref{c.isolated3}]
Let $L$ be a center leaf in a small tubular neighborhood $N$ of $L_*$ and assume that $L$ is fixed by the holonomy group $\Hol(L_*)$ of $L_*$. Let $p_u: \overline{N}\rightarrow N$ be the $2$-sheeted cover of $N$ which makes $E^u$ oriented. Then $L$ lifts to two disjoint copies $L_1,L_2$ in $\overline{N}$, each one diffeomorphic to $L$. Inside $\overline{N}$, $\overline{\mathcal{F}^s_{loc}}(L_i), i=1,2$ intersects $\overline{\mathcal{F}^u_{loc}}(L_*)$ in exactly one center leaf $K_i,i=1,2$ which map each one diffeomorphically to $\mathcal{F}^u_{loc}(L_*) \cap \mathcal{F}^s_{loc}(L)$. Having disjoint preimages under $p_u$, $\mathcal{F}^u_{loc}(L_*) \cap \mathcal{F}^s_{loc}(L)$ has to be $L_*$. Consequently, $L$ lies on the local stable manifold through $L_*$.
\end{proof}
No more than perhaps one center leaf of the sequence $L_j$ is contained inside $\mathcal{F}^s_{loc}(L_*)$. With Claim~\ref{c.isolated3} this implies that they are no fixed points for the holonomy group of $L_*$ contradicting Claim~\ref{c.isolated2}. \\
So we can conclude that the center leaves with non-trivial holonomy are isolated. 
\end{proof}

\subsubsection{Codimension 2}\label{proof_codimension2}
To prove Theorem~\ref{t.trivial} for the case of a compact center foliation with codimension two an alternative way is available which we present here for its own beauty because it utilizes a different idea without recurring to the transitivity of $f$: 

let $f: M \rightarrow M$ be a partially hyperbolic $C^1$-diffeomorphism with a compact center foliation and assume $\dim E^u = \dim E^s =1$. Assume that $E^u$ is oriented. Let $A$ be the set of center leaves having a holonomy group of maximal order. It follows with Lemma~\ref{lemma_A} that $A$ is a $\mathcal{F}^{cu}$-saturated set. The set $A$ is non-empty, so there exists $x \in A$ such that the center unstable manifold $L^{cu}_x \subset A$ which is an unbounded set inside the compact set $A$. Therefore it has to accumulate inside $A$. Let $z \in A$ denote the accumulation point. Then we find a Reeb stability neighborhood $V$ of $z$ and a sequence $\left\{D^{cu}_n(x)\right\}_{n \geq N} \subset V$ of center unstable plaques of $L^{cu}_x$ accumulating at $z$. The center unstable plaques $D^{cu}_n(x)$ intersect the local stable manifold $D^s(z)$ in points $x_n$ converging to $z$ which are therefore fixed points of the center holonomy group $H|_{D^s(z)}$ restricted to the stable direction. As $E^s$ is one-dimensional, this group $H|_{D^s(z)}$ has at most two elements and unless it is the trivial group, $z$ is its unique fixed point. So it is implied that $H|_{D^s(z)}$ is the trivial group, but $z \in A$ which implies that the maximal order of a center holonomy group is one, consequently it is $A = M$ finishing the proof. 

\subsubsection{Codimension 3}\label{proof_codimension3}
This proof is only a sketch to show which kind of arguments might play a role to prove the trivial holonomy: \\
let $f: M \rightarrow M$ be a partially hyperbolic $C^1$-diffeomorphism with a uniformly center foliation and assume $\dim E^u=1, \dim E^s=2$ and $E^u$ is oriented. Define the set $A$ as above. With the same argument as above we can conclude that for every $x \in A$ an arc $\sigma_x \subset L^s(x)$ lies inside $A$ containing the fixed point of the center holonomy group of $x$ restricted to the stable direction. This induces two one-dimensional foliations on $A$, a stable one by $\sigma_x$ and the unstable foliation.\\
Denote with $H$ the holonomy group of maximal order and lift $A$ to $\tilde{A}$ (such that $\tilde{A}/H = A$), the holonomy cover of $H$. Then every connected component of $\tilde{A}/\tilde{\mathcal{F}}^c$ is a two-dimensional tori. As $F$ is transitive, this is impossible.    

\section{Margulis measure}\label{s.margulis}
In this section we prove Theorem~\ref{theorem_measure}. This proof follows the main ideas of the classical proof by Margulis (\cite{M70}) and its modern version in \cite{HK95}. Note that the assumed differentiability of $f$ is now $C^2$ as we have to use the absolute continuity of the stable and unstable foliations. 
\begin{rem}
Theorem~\ref{theorem_measure} under the additional assumption of a one-dimensional unstable direction is more easily proved in the recent article \cite{G11} by Gogolev. He also adapts the proof by Hiraide written down for codimension one Anosov diffeomorphisms for his setting of a partially hyperbolic diffeomorphisms with a compact center foliation with simply connected leaves and a one-dimensional unstable direction - corresponding to our last section~\ref{s.torus}. 
\end{rem}
\begin{rem}[Notations]
\begin{itemize}\item Each leaf of the five $f$-invariant foliations, $\mathcal{F}^u$, $\mathcal{F}^{cu}, \mathcal{F}^c, \mathcal{F}^{cs}, \mathcal{F}^s$, is a Riemannian manifold, so it is endowed with a Riemannian metric and the volume induced by this metric. We denote that volume inside the respective leaf by $\vol$ where the context makes it clear which leaf is meant.  
\item We denote by $D^u(x,\epsilon) \subset L^u$ (or simply by $D^u$) the set of points inside an unstable leaf $L^u$ at unstable distance $< \epsilon$ from $x\in L^u$.
\item We denote by $\mathcal{F}^u(L,\epsilon):=\bigcup_{z \in L} D^u(z,\epsilon)$, the union of local unstable disks through a center leaf $L$. Analogously, $\mathcal{F}^u(L)$ is the union of unstable leaves through the center leaf $L$ which is equal to the center unstable leaf $L^{cu}$ containing $L$.  
\end{itemize} 
\end{rem}
The intuitive idea of the construction is quite simple, and we explain it shortly before we proceed with the formal and quite technical proof:
\subsection{Intuition for proof}
As $f$ is centrally transitive, every center stable and center unstable leaf is dense.
Fix an unstable leaf $L^{u}_0$ of reference and a non-empty open set $A \subset L^{u}_0$ with compact closure inside $L^u_0$. Setting the measure of $A$ to $1$, we would like to define a measure for any open set $B \subset L^{u}$ with compact closure inside $L^u$. 
As any center stable leaf is dense, the center stable leaves through $A$ intersect $B$ (see Lemma~\ref{l.minimal2}). If we regard the $\mathcal{F}^c$-saturates $\mathcal{F}^cA$ and $\mathcal{F}^cB$ of $A$ and $B$, then the positively iterated sets $f^n(\mathcal{F}^cA)$ and $f^n(\mathcal{F}^cB)$ get, in some sense, closer and closer when $n \rightarrow \infty$. The idea is then to define
the measure $\mu_{L^u}(B)$ of the set $B$ 
by  
$$\mu_{L^u} (B) \mbox{``}:=\mbox{''} \lim_{n \rightarrow \infty} \frac{\vol(f^n(\mathcal{F}^cB))}{\vol(f^n(\mathcal{F}^cA))}.$$
However it is not clear that this limit exists. This is the main technical difficulty of the proof.

We will first have to see that the corresponding sequence is bounded.
More precisely, as a consequence of the minimality of the center stable foliation, one can partition $B$ into measurable sets $B_1, \dots B_k$  such that for each $i$ there is a stable holonomy map $h^s$ that sends the saturated set $\mathcal{F}^cB_i$ inside $\mathcal{F}^cA$ (see Corollary~\ref{c.bound}). The absolute continuity of the stable holonomy implies that, for $n \rightarrow \infty$, the quotient of the volume of $f^n(\mathcal{F}^cB_i)$ by the volume of  $f^n(h^s(\mathcal{F}^cB_i))$ converges to $1$  (see Lemma~\ref{l.bound1}). The measure of $f^n(\mathcal{F}^cB)$ will be the sum of the volumes of $\mathcal{F}^cB_i$. So we get that the sequence 
$$\frac{\vol(f^n(\mathcal{F}^cB))}{\vol(f^n(\mathcal{F}^cA))}$$
has an upper bound (and we get a lower bound symmetrically). Some functional analysis will be needed here: the measure $\mu_{L^u}(B)$ of the set $B$ will be established as a limit of a subsequence of convex combinations of elements $\frac{\vol(f^n(\mathcal{F}^cB))}{\vol(f^n(\mathcal{F}^cA))}$. By construction, this limit measure will be invariant under center stable holonomy.  \\
It should be noted that after the adaptation of the definitions \emph{modulo the center foliation} and smaller changes the proof is analogous to the classical proof of the Margulis measure (cf. \cite{M70}) which is nicely explained in \cite{HK95}. So, the main point of this section and the principal idea behind the proof is to define the relevant objects such that we can follow the lines of the classical proof without major changes. The present version of the proof of the Margulis measure seems to allow an easy adaptation to other settings, so that we hope that it might help to discover new and interesting applications for the Margulis measure.   
\subsection{Preliminary results}
Before we start with the proof we state the following facts which are directly implied by our hypothesis. 

The following is  a reformulation of Corollary~\ref{l.maximal} in the trivial holonomy setting:

\begin{lemma}\label{l.uniqueintersection}
Each stable leaf $L^s$ intersects each center leaf $L^c\in \mathcal{F}^c(L^s)$ in exactly one point.
\end{lemma}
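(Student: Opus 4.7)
The statement is essentially a direct corollary of Lemma~\ref{l.maximal2} once one observes that the trivial holonomy hypothesis makes every center leaf ``maximal'' in the sense of that lemma. My plan is to separate the claim into an existence part and a uniqueness part, and to handle each with a one-line appeal to previously established results.

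For \emph{uniqueness}, I would apply Lemma~\ref{l.maximal2} as follows. The hypothesis $L^c \in \mathcal{F}^c(L^s)$ means that $L^c$ and $L^s$ lie in a common center stable leaf $L^{cs}$. Under the assumption of trivial center holonomy (which is part of the standing hypothesis of Theorem~\ref{theorem_measure}), we have $|\Hol(L_1)| = 1$ for every center leaf $L_1 \subset L^{cs}$. In particular $|\Hol(L_1)| \leq |\Hol(L^c)| = 1$ for every such $L_1$, so Lemma~\ref{l.maximal2} applies to $L^c$ and shows that $L^c$ meets $L^s$ in at most one point.

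For \emph{existence}, I would invoke the completeness property of the center stable foliation recalled just before Lemma~\ref{l.stablemanifold}: for any center leaf $L^c \subset L^{cs}$, the center stable leaf $L^{cs}$ equals $\mathcal{F}^s(L^c) = \bigcup_{z \in L^c} L^s(z)$. Since $L^s \subset L^{cs}$, the leaf $L^s$ must therefore be one of the stable leaves $L^s(z)$ for some $z \in L^c$, which yields $z \in L^c \cap L^s$ and in particular that the intersection is non-empty.

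Combining the two parts gives exactly one intersection point, as required. The only substantive ingredient is Lemma~\ref{l.maximal2} (together with completeness from \cite{BB12}), so I do not expect any real obstacle; the role of this lemma is simply to record the clean form of Corollary~\ref{l.maximal} that will be convenient in the subsequent construction of the Margulis measure.
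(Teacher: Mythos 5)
Your proposal is correct and follows essentially the same route as the paper, which simply records this lemma as a reformulation of Corollary~\ref{l.maximal} (via Lemma~\ref{l.maximal2}) in the trivial holonomy setting, where every center leaf trivially has maximal holonomy. Your explicit treatment of the existence part via the completeness property $L^{cs}=\mathcal{F}^s(L^c)$ from \cite{BB12} is a welcome addition, since the paper leaves that half implicit in the ``exactly once'' statement of Lemma~\ref{l.maximal2}.
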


This will allow us to find global stable holonomies between unstable neighborhoods of center leaves. 
Let us first define stable holonomies:

\begin{defn}\label{d.stable_holonomy}
Given two $C^1$-submanifolds $V,W\subset M$ transverse to the stable foliation, a {\em stable holonomy map}  $h^s\colon V\to W$ is a continuous map such that $h^s(x)\in L^s(x)\cap W$, for all $x\in V$.
\end{defn}

The {\em stable projection distance} of $h^s$ is the maximum of the distances between $x\in W$ and $h^s(x)$ along the stable foliation:
$$d(h^s)=\max_{x\in W}d^s(x,h^s(x)).$$
We have the following elementary fact on holonomies:

\begin{lemma}\label{l.perturb}
Let $h^s\colon V \to W$ be a stable holonomy map between two manifolds $V$ and $W$ as in Definition~\ref{d.stable_holonomy}, with stable projection distance strictly less than some $\rho>0$. For any manifold $U$ whose closure is included in $V$ and for any manifold $\tilde{U}$ $C^1$-close enough to $U$, there exists a stable holonomy map from $\tilde{U}$ to $W$ with stable projection distance less than $\rho$.
\end{lemma}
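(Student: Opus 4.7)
The strategy is to extend $h^s$ to a continuous stable holonomy map defined on an open neighborhood $\Omega\subset M$ of $\overline{U}$, so that any submanifold $C^1$-close enough to $U$ is automatically contained in $\Omega$ and inherits a holonomy to $W$. Three ingredients enter: compactness of $\overline{U}\subset V$, transversality of $W$ to $\mathcal{F}^s$, and continuity of the stable foliation (whose $C^1$-leaves depend continuously on the basepoint, being tangent to the continuous distribution $E^s$).

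First, since $\overline{U}$ is compact and $d^s(x,h^s(x))<\rho$ holds strictly on $V$, there exists $\eta<\rho$ such that $d^s(x,h^s(x))\le\eta$ for every $x\in \overline{U}$. This gap is what will allow the projection distance to remain below $\rho$ after perturbation. Next, for each $x_0\in\overline{U}$ set $y_0:=h^s(x_0)\in W$. Transversality of $W$ to $\mathcal{F}^s$ at $y_0$, combined with continuity of the stable foliation, yields (via the implicit function theorem in a local chart straightening $\mathcal{F}^s$ around $y_0$) an open neighborhood $\Omega_{x_0}$ of $x_0$ in $M$ together with a unique continuous map $H_{x_0}\colon \Omega_{x_0}\to W$ such that $H_{x_0}(y)\in L^s(y)\cap W$, $H_{x_0}(x_0)=y_0$, and $d^s(y,H_{x_0}(y))<\rho$. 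Uniqueness of the nearby intersection point forces $H_{x_0}$ to coincide with $h^s$ on $V\cap \Omega_{x_0}$, and forces two such $H_{x_0},H_{x_0'}$ to agree on overlaps.

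Extracting a finite subcover $\Omega:=\Omega_{x_1}\cup\cdots\cup\Omega_{x_k}$ of $\overline{U}$, the pieces glue into a continuous map $H\colon \Omega\to W$ that is a stable holonomy with projection distance strictly below $\rho$. If $\tilde U$ is $C^1$-close enough to $U$, then in particular it is $C^0$-close, so $\tilde U\subset \Omega$, and the restriction $H|_{\tilde U}\colon \tilde U\to W$ is the stable holonomy map required by the lemma.

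The main technical step is the existence and uniqueness of each local map $H_{x_0}$: one needs that $W$ is a $C^1$ submanifold transverse to $\mathcal{F}^s$ and that stable plaques vary continuously in the appropriate $C^1$ topology, so that in a foliation chart the preimage of $W$ is a local graph over the transversal direction and hence meets each stable plaque in exactly one nearby point. Everything else is a routine compactness and extraction argument, whose role is to turn this pointwise transversality analysis into a uniform conclusion that is stable under small $C^1$-perturbations of $U$.
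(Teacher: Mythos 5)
The paper states this lemma without any proof, presenting it as an elementary fact about holonomies, so there is no argument of the author's to compare yours with; judged on its own, your extend-then-restrict strategy is viable, but the one step carrying real content is asserted too quickly. ``Uniqueness of the nearby intersection point'' is only a local statement: a stable leaf $L^s(y)$ may meet $W$ in several points within stable distance $\rho$ of $y$ (in the paper's applications $W$ is a piece of an object whose center-stable saturation is dense, so this genuinely happens), so the mere fact that $H_{x_0}(y)$ and $H_{x_0'}(y)$ are both admissible images does not by itself force them to coincide, nor does it force $H_{x_0}=h^s$ on all of $V\cap\Omega_{x_0}$ (a priori only on the local sheet of $V$ through $x_0$). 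To make the gluing legitimate you must first fix uniform scales: by compactness of $\overline{U}$ and of $h^s(\overline{U})$ and embeddedness of $V$ and $W$, there is $r>0$ such that every ambient $r$-ball centered on these sets meets $V$ (respectively $W$) in a single plaque; $h^s$ is uniformly continuous on $\overline{U}$, so continuations started at base points $x_0,x_0'$ that are ambiently close land in one and the same plaque of $W$, where the intersection point really is unique, and only then do the local extensions agree on overlaps. Two smaller points: the implicit function theorem is not literally available ($\mathcal{F}^s$ is only continuous transversally, with $C^1$ leaves), and since $d^s(x_0,h^s(x_0))$ may be close to $\rho$, a single chart at $y_0$ does not reach $x_0$; you need the foliation holonomy along the stable segment from $x_0$ to $y_0$, i.e.\ a finite chain of product charts.

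There is also a shorter route that avoids extending $h^s$ altogether and sidesteps the multiplicity issue in $W$: by transversality of $V$ to $\mathcal{F}^s$ and compactness of $\overline{U}$ there are $\delta>0$ and an ambient neighborhood of $\overline{U}$ on which the map $\pi$, sending a point to the unique intersection of its stable $\delta$-plaque with $V$, is well defined and continuous; if $\tilde{U}$ is close enough to $U$ (even $C^0$-close) it lies in that neighborhood, and $h^s\circ\pi|_{\tilde{U}}$ is a stable holonomy map from $\tilde{U}$ to $W$ whose projection distance is at most $\delta$ plus the projection distance of $h^s$, hence less than $\rho$ by the strict inequality in the hypothesis and the triangle inequality inside stable leaves. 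Either way, note that the argument (and indeed the statement, for ``$C^1$-close'' to be meaningful) implicitly uses that $\overline{U}$ is compact, which holds in all of the paper's applications; you assume this silently, and it is worth making explicit.
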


The next lemma follows from the existence of a dense orbit of center leaves and the density of periodic center leaves within $M$ - following the classical proof that the stable and unstable foliations of a transitive Anosov diffeomorphism are minimal. 

\begin{lemma}[Minimality of foliations]\label{l.minimal}
Under the assumptions of Theorem~\ref{theorem_measure} the center stable foliation is minimal, i.e. every center stable leaf is dense in $M$. 
\end{lemma}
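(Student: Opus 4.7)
The plan is to descend to the quotient dynamics $F\colon X\to X$ where $X:=M/\mathcal{F}^c$, and combine central transitivity with the Shadowing Lemma for orbits of center leaves from \cite{BB12}. Under the trivial center holonomy hypothesis, recalled in the remark following Theorem~\ref{theorem_epstein}, $X$ is a topological manifold on which $F$ is an expansive homeomorphism admitting a local product structure; moreover $X$ is connected (as a continuous image of the connected manifold $M$) and central transitivity of $f$ amounts precisely to topological transitivity of $F$.

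Fix an arbitrary center leaf $L\in\mathcal{F}^c$ and a non-empty open set $U\subset X$. I aim to find a center leaf lying in $U\cap\mathcal{F}^{cs}(L)$. Choose $u\in U$ and $\varepsilon>0$ small enough that the ball $B_\varepsilon(u)\subset U$ and that $\varepsilon$ is less than the local product structure constant. Let $\delta=\delta(\varepsilon)>0$ be the shadowing constant provided by \cite{BB12}. By topological transitivity of $F$, there is an integer $n\geq 0$ with $d_H(f^n(u),L)<\delta$, so that the concatenation
$$\bigl(u,\,f(u),\,\ldots,\,f^n(u),\,L,\,f(L),\,f^2(L),\,\ldots\bigr)$$
is a $\delta$-pseudo-orbit of center leaves with a single jump of size less than $\delta$ at the $n$-th position.

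For each $N\geq 0$, applying the Shadowing Lemma of \cite{BB12} to the truncation of this pseudo-orbit at time $n+N$ yields a genuine orbit of center leaves starting at some $z_N$ with $d_H(z_N,u)<\varepsilon$ and $d_H(f^{n+j}(z_N),f^j(L))<\varepsilon$ for $j=0,\ldots,N$. Extracting a subsequential Hausdorff-limit $z_\infty$ of the $z_N$, one obtains a center leaf $z_\infty\in\overline{B_\varepsilon(u)}\subset U$ whose forward orbit after time $n$ remains within $\varepsilon$ of the forward orbit of $L$ for every $j\geq 0$.

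Finally, by the local product structure, two center leaves whose forward orbits stay uniformly $\varepsilon$-close for all forward iterates must lie on the same center stable leaf: any non-zero unstable component of $f^n(z_\infty)$ relative to $L$ in its local product decomposition would be expanded exponentially by $f$ and would destroy the uniform bound on $d_H(f^{n+j}(z_\infty),f^j(L))$. Consequently $f^n(z_\infty)\in\mathcal{F}^{cs}_{loc}(L)$, and therefore $z_\infty\in\mathcal{F}^{cs}(L)\cap U$, proving density. The main technical obstacle is the clean passage from finite- to infinite-time shadowing via the limiting argument, combined with the use of the local product structure to convert uniform forward-asymptotic closeness into membership in the same center stable leaf; both steps mirror the classical proof of minimality of stable foliations for transitive Anosov diffeomorphisms, adapted here to the quotient dynamics.
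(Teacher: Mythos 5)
Your argument breaks down at the very last step, and the problem is not cosmetic. From the uniform closeness $d_H(f^{n+j}(z_\infty),f^j(L))\le\varepsilon$ for all $j\ge 0$ you may indeed conclude (via expansivity/local product structure) that $f^n(z_\infty)\in\mathcal{F}^{cs}_{loc}(L)$; but since individual leaves are not $f$-invariant, this only gives $z_\infty\in f^{-n}\left(\mathcal{F}^{cs}(L)\right)=\mathcal{F}^{cs}\left(f^{-n}(L)\right)$, not $z_\infty\in\mathcal{F}^{cs}(L)$. The time shift $n$ is intrinsic to your strategy: transitivity only produces a point of $U$ whose orbit comes $\delta$-close to $L$ at some unknown time $n$ (depending on $U$), so the shadowing orbit matches the forward orbit of $L$ only after that shift. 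What you actually prove is that $\bigcup_{n\ge 0}\mathcal{F}^{cs}\left(f^{-n}(L)\right)$ is dense, equivalently that the fixed leaf $\mathcal{F}^{cs}(L)$ meets $f^n(U)$ for some $n\ge 0$; this is strictly weaker than minimality, since a countable union of leaves can be dense without any single leaf being dense. (There is also a minor misuse of transitivity — an arbitrary $u\in U$ need not have a dense forward orbit — but that is easily repaired by taking $u$ on the dense orbit or by applying transitivity to the open sets $B_\varepsilon(u)$ and $B_\delta(L)$; the time-shift problem is the real gap and cannot be fixed by re-indexing.)

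The paper's (sketched) proof follows the classical Anosov argument and uses precisely the ingredient your proof omits: the density of \emph{periodic} center leaves, which follows from the Shadowing Lemma of \cite{BB12} together with transitivity. For a periodic center leaf the center stable leaf is invariant under a power of $f$, which is exactly what neutralizes the time shift; one first obtains density of the invariant (center) stable/unstable leaves of periodic center leaves, and then upgrades to \emph{all} leaves via the local product structure (e.g. an inclination-lemma type argument, or the standard ``sufficiently large unstable disks are $\epsilon$-dense'' step), rather than via a time-shifted shadowing of a single target leaf. To repair your proof you would need to incorporate such an argument (or establish topological mixing so that $n$ can be chosen in a prescribed congruence class for a periodic leaf); as written, the conclusion $z_\infty\in\mathcal{F}^{cs}(L)\cap U$ does not follow.
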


With the results above we can state the following auxiliary statement:
\begin{lemma}\label{l.minimal2}
Let $A \subset L^u$ be an open non-empty set with compact closure inside the unstable leaf $L^u$. Then there exists $\epsilon > 0$ and $\rho>0$ such that for any $x \in M$ and for any unstable disk $D^u$ of radius $\epsilon > 0$ centered at $x$ there exists a stable holonomy map $h^s$ which sends $\mathcal{F}^cD^u$ inside $\mathcal{F}^cA$, i.e. $h^s(\mathcal{F}^cD^u) \subset \mathcal{F}^cA$, whose stable projection distance is less than $\rho$.
\end{lemma}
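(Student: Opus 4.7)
The plan is to combine the minimality of $\mathcal{F}^{cs}$ (Lemma~\ref{l.minimal}) with the compactness of $M$: minimality yields, at each point $x\in M$, some stable holonomy sending the center-saturation of a small unstable disk at $x$ into $\mathcal{F}^cA$, and Lemma~\ref{l.perturb} together with compactness of $M$ promotes this pointwise statement into a uniform one. I would first fix a smaller open set $B$ with compact closure $\overline{B}\subset A$ inside $L^u$, so that any sufficiently small perturbation of a subset of $\mathcal{F}^cB$ still lies in $\mathcal{F}^cA$; it then suffices to produce, for each $x\in M$, a stable holonomy landing in $\mathcal{F}^cB$.

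For the pointwise statement, I would first observe that $\mathcal{F}^{cs}B\subset M$ is open -- by the local product structure of $\mathcal{F}^{cs}$ and $\mathcal{F}^u$, the condition of having local unstable coordinate in $B$ is open -- and $\mathcal{F}^{cs}$-saturated, so its complement is closed and $\mathcal{F}^{cs}$-saturated. Since every center stable leaf is dense by Lemma~\ref{l.minimal}, that complement is empty and $\mathcal{F}^{cs}B=M$. Given $x\in M$, pick $y\in L^{cs}(x)\cap B$. By Lemma~\ref{l.uniqueintersection}, the stable leaf $L^s(x)$ meets $L^c(y)$ in a unique point $z(x)\in\mathcal{F}^cB$. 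Working in a flow-box neighborhood of the stable segment from $x$ to $z(x)$, the local product structure of $\mathcal{F}^s$ and $\mathcal{F}^{cu}$ yields, for some $\epsilon_x>0$ sufficiently small, a stable holonomy map $h^s_x\colon \mathcal{F}^cD^u(x,\epsilon_x)\to L^{cu}(y)$ whose image is a center-saturated open neighborhood of $z(x)$ inside $L^{cu}(y)$, and hence inside $\mathcal{F}^cA$ by the safety margin, with projection distance at most $\rho_x:=d^s(x,z(x))+1$.

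To uniformize, I would apply Lemma~\ref{l.perturb}: for $x'$ in a sufficiently small neighborhood $U_x$ of $x$, the center-saturated unstable disk $\mathcal{F}^cD^u(x',\epsilon_x)$ is a $C^1$-small perturbation of $\mathcal{F}^cD^u(x,\epsilon_x)$, by continuity of the $C^1$-leaves of $\mathcal{F}^u$ together with the fact that $\mathcal{F}^c$ has trivial holonomy, so Reeb stability makes the center-saturation vary continuously in $C^1$. Lemma~\ref{l.perturb} then provides a stable holonomy from $\mathcal{F}^cD^u(x',\epsilon_x)$ with projection distance less than $\rho_x$, whose image remains in $\mathcal{F}^cA$ after possibly shrinking $U_x$. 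Compactness of $M$ extracts a finite subcover $\{U_{x_1},\dots,U_{x_N}\}$; setting $\epsilon:=\min_i\epsilon_{x_i}$ and $\rho:=\max_i\rho_{x_i}$ completes the proof. The main technical obstacle I expect is controlling the $C^1$-closeness of $\mathcal{F}^cD^u(x',\epsilon_x)$ to $\mathcal{F}^cD^u(x,\epsilon_x)$ well enough to feed Lemma~\ref{l.perturb} with a uniform estimate -- and this is precisely where the trivial holonomy of the center foliation is crucial, since without it the center-saturation operation could jump near leaves of higher holonomy.
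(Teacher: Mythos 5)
Your proposal is correct and takes essentially the same route as the paper: minimality of $\mathcal{F}^{cs}$ combined with the unique intersection of stable and center leaves (Lemma~\ref{l.uniqueintersection}) gives, at each point, a stable holonomy sending a center-saturated unstable disk into $\mathcal{F}^cA$, and then Lemma~\ref{l.perturb} plus compactness of $M$ makes $\epsilon$ and $\rho$ uniform. Your safety-margin set $B$ and the flow-box phrasing are harmless variants of the paper's step of extending the center-leaf holonomy $L\to L'$ to a homeomorphism between local unstable neighborhoods.
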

\begin{proof}[Proof of Lemma~\ref{l.minimal2}]
Let $L$ be a center leaf going through a point $x\in M$. By Lemma~\ref{l.minimal}, there exists a center leaf $L'\subset \mathcal{F}^cA$ that lies in the same center stable leaf $L^{cs}(x)$. By Lemma~\ref{l.uniqueintersection}, the map
$$h^s: L \rightarrow L', \quad x \mapsto L^s(x)\cap L'$$
is a well-defined bijection. Note that, by transversality of $L$ and $L'$ to the foliation $\mathcal{F}^s$ within $L^{cs}(x)$, that map $h^s$ is a homeomorphism. By the same transversality argument, there is a unique extension of that map to a homeomorphism $h^s\colon \mathcal{F}^u_{loc.}(L)\to \mathcal{F}^u_{loc.}(L')$ from a local unstable neighborhood  of $L$ to a local unstable neighborhood of $L'$, such that $h^s(y)\in L^s(y)\cap \mathcal{F}^u_{loc.}(L')$. As a consequence, for some $\epsilon>0$, the unstable disk $D^u$ of radius $\epsilon > 0$ centered at $x$ satisfies the following: $\mathcal{F}^cD^u$ is sent by a stable holonomy map $h^s$ strictly inside $\mathcal{F}^cA$. By Lemma~\ref{l.perturb}, reducing $\epsilon>0$, we have that any unstable disk $D^u$ of radius $\epsilon > 0$ centered at $y$ close enough to $x$ satisfies the same property, moreover the stable projection distances are all less than some $\rho>0$. We conclude the proof of Lemma~\ref{l.minimal2} by a compactness argument.
\end{proof}

For the existence of the Margulis measure, the absolute continuity of the stable (and unstable) foliation is crucial. The following is a particular case of the classical theorem proved by Brin and Pesin, Pugh and Shub for partially hyperbolic diffeomorphisms: 

\begin{thm}[\cite{BP74},\cite{PS72}]\label{t.acontinuity}
The stable and unstable foliations of the $C^2$ partially hyperbolic diffeomorphism $f$ have absolutely continuous holonomies $h$, i.e. given two manifolds $V,W$ tangent to the center-unstable foliation and a stable holonomy map
$h^s: V \rightarrow W$, then 
$$\vol_{W}(h^s(A)) = \int_A \Jac(h^s(t))d\vol_{V}(t)$$
for every measurable set $A \subset V$. The jacobian is bounded from above and below away from zero. Moreover, when the stable projection distance of $h^s$ goes to $0$, the jacobian $\Jac(h^s)$ converges uniformly to $1$. 
\end{thm}

This Lemma~\ref{l.minimal2} and the absolute continuity of the stable foliation provides us with the following bound which is essential for the existence of the unstable measure:
 \begin{lemma}\label{l.bound1}
Let $A \subset L^u$ be an open non-empty set with compact closure inside the unstable leaf $L^u$. Then, given any non-empty open set $B$ with compact closure inside an unstable leaf such that there exists a stable holonomy map $h^s\colon \mathcal{F}^cB \to \mathcal{F}^cA$, then the  sequence
$$\frac{\vol(f^n(\mathcal{F}^cB))}{\vol(f^n(\mathcal{F}^cA))}$$
where $n\in \mathbb{N}$, is bounded by some constant $C>0$.
\end{lemma}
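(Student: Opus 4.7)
\textbf{Proof proposal for Lemma~\ref{l.bound1}.} The plan is to push the given stable holonomy forward by $f^n$ and invoke absolute continuity together with the exponential contraction of stable distances. Set $V = \mathcal{F}^c B$ and $W = h^s(V) \subset \mathcal{F}^c A$; both are open subsets of center-unstable leaves (open, since $B$ is open in an unstable leaf and the center and unstable foliations are locally a product inside $\mathcal{F}^{cu}$), and both have finite volume because $B$ has compact closure in $L^u$ and the center leaves have uniformly bounded volume. Since $f$ preserves $\mathcal{F}^s$, the map
\[
h^s_n \;:=\; f^n \circ h^s \circ f^{-n} \colon f^n(V) \longrightarrow f^n(W) \subset f^n(\mathcal{F}^c A)
\]
is again a stable holonomy map between submanifolds tangent to $\mathcal{F}^{cu}$.

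The key quantitative point is that its stable projection distance decays exponentially: if $y \in V$ and $h^s(y) \in W$ are joined by a stable arc of length at most $d(h^s)$, then partial hyperbolicity gives a constant $\lambda < 1$ with $d^s(f^n(y), f^n(h^s(y))) \le \lambda^n d(h^s)$, so $d(h^s_n) \to 0$. By Theorem~\ref{t.acontinuity}, the jacobian $\Jac(h^s_n)$ is then defined and converges uniformly to $1$ on $f^n(V)$. Choose $N$ so that $\Jac(h^s_n) \ge \tfrac{1}{2}$ pointwise for all $n \ge N$. The change-of-variables formula gives
\[
\vol\bigl(f^n(W)\bigr) \;=\; \int_{f^n(V)} \Jac(h^s_n)(t)\, d\vol(t) \;\ge\; \tfrac{1}{2}\,\vol\bigl(f^n(V)\bigr),
\]
and since $f^n(W) \subset f^n(\mathcal{F}^c A)$, this yields
\[
\frac{\vol(f^n(\mathcal{F}^c B))}{\vol(f^n(\mathcal{F}^c A))} \;\le\; 2 \qquad \text{for all } n \ge N.
\]

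For the finitely many remaining indices $n = 0, 1, \dots, N-1$, each ratio is finite and positive (both numerator and denominator being the volumes of relatively compact non-empty open sets in center-unstable leaves), so setting $C$ equal to the maximum of $2$ and those finitely many ratios gives the claimed uniform bound.

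The only real subtlety is the uniform convergence $\Jac(h^s_n) \to 1$: this is not automatic from absolute continuity alone, but is the last assertion of Theorem~\ref{t.acontinuity}, which already packages it for us. Everything else is bookkeeping, built on the observation that iterating a stable holonomy by $f$ preserves its holonomy nature while shrinking its stable projection distance.
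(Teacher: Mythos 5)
Your proof is correct and follows essentially the same route as the paper: conjugate the holonomy by $f^n$ to get $h^s_n=f^n\circ h^s\circ f^{-n}$ with stable projection distance tending to $0$, use Theorem~\ref{t.acontinuity} to get the Jacobian uniformly close to $1$, and bound $\vol(f^n(h^s(\mathcal{F}^cB)))$ by $\vol(f^n(\mathcal{F}^cA))$. Your explicit handling of the finitely many initial indices is just a slightly more careful bookkeeping of the same argument.
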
 
\begin{proof}[Proof of Lemma~\ref{l.bound1}]
Given a stable holonomy map $h^s\colon h^s(\mathcal{F}^cB) \to \mathcal{F}^cA$, the map $$h^s_n=f^n\circ h^s\circ f^{-n}\colon f^n(\mathcal{F}^cB)\to f^n(\mathcal{F}^cA)$$ is also a stable holonomy map, and its stable projection distance clearly goes to $0$ as $n\to +\infty$. Then Theorem~\ref{t.acontinuity} implies that $$\vol(h^s_n(f^n\mathcal{F}^cB)) \geq \inf |\Jac(h^s_n)| \vol(f^n\mathcal{F}^cB)$$
and that $\inf |\Jac(h^s_n)|$ converges to $1$. Trivially  $\vol(h^s_n(f^n\mathcal{F}^cB)) \leq \vol(f^n\mathcal{F}^cA)$ and we deduce that the sequence 
$$\frac{\vol(f^n(\mathcal{F}^cB))}{\vol(f^n(\mathcal{F}^cA))}$$
is bounded. \end{proof}

The following corollary is a direct implication of Lemma~\ref{l.minimal2} and guarantees that we can cover every non-empty open set $B$ with compact closure inside an unstable leaf by finitely many sets such that each of their $\mathcal{F}^c$-saturates are sent by stable holonomy maps inside $\mathcal{F}^cA$. This fact allows us to define later the measure for any such $B$.   
\begin{corol}\label{c.bound}
Let $A$ be an open non-empty set with compact closure inside an unstable leaf. For any open non-empty set $B$ with compact closure inside an unstable leaf $L^u$ there exist sets $B_1, \dots, B_k$ inside the unstable leaf $L^u$ such that for every $1\leq i\leq k$ there exists a stable holonomy map $h^s$ which sends $\mathcal{F}^cB_i$ inside $\mathcal{F}^cA$. 
\end{corol}
The following corollary is necessary for the proof of the expansivity of the measure:
\begin{corol}\label{c.bound2}
Let $A$ be as in Lemma~\ref{c.bound} and $\rho>0$ as given by Lemma~\ref{l.minimal2}. For any open non-empty set $B$ with compact closure inside an unstable leaf there exists $n > 0$ such that there exists a stable holonomy map $h^s$ which sends $f^{-n}(\mathcal{F}^cB)$ inside $\mathcal{F}^cA$, with stable projection distance less than $\rho$. 
\end{corol} 
\begin{proof}[Proof of Corollary~\ref{c.bound}]
Lemma~\ref{l.minimal2} implies that there exists $\epsilon > 0$ for $A$ such that any unstable disk $D^u$ of radius $\epsilon$ is mapped into $\mathcal{F}^cA$ by a stable holonomy map of stable projection distance less than $\rho$. For $n$ large enough, $f^{-n}(B)$ is contained inside an unstable disk of radius $\epsilon$.  
\end{proof}
\subsection{Proof of Theorem~\ref{theorem_measure}}
\subsubsection{Construction of the family of measures}
Now we can start defining the principal objects for the proof of Theorem~\ref{theorem_measure}: 
denote by $C_c(L^u)$ the set of continuous functions $\phi: L^u \rightarrow \mathbb{R}$ with compact support. Denote with $C_c^+(L^u)$ the subset of non-negative continuous functions with compact non-empty support endowed with the supremum norm $\left\|\phi\right\|$. Define the sets 
\begin{align*}
C_c&:=\left\{\phi \;\big|\; \exists L^u \in \mathcal{F}^u\;\mbox{such that}\; \phi \in C_c(L^u)\right\}, \\
C_c^+&:=\left\{\phi \;\big|\; \exists L^u \in \mathcal{F}^u\;\mbox{such that}\; \phi \in C_c^+(L^u)\right\}.
\end{align*}
Define the set 
$$\mathfrak{L}:=\left\{G: C_c \rightarrow \mathbb{R}\,\big|\; \text{for all}\; L^u \in \mathcal{F}^u:\; \; G|_{C_c(L^u)}\;\text{is a positive linear functional}.\right\}.$$
The set $\mathfrak{L}$ can be embedded into $\prod_{\phi \in C^+_c} \mathbb{R}_{\phi}$ by the evaluation map $G \mapsto \prod G(\phi)$. We can see the injectivity of this embedding as follows: consider $G,H \in \mathfrak{L}$ which are equal for all non-negative functions $\phi \in C_c^+$, $G(\phi)=H(\phi)$. Every function $\phi \in C_c$ can be written as the difference of two non-negative functions, hence, the linearity of $G,H$ implies that $G(\phi)=H(\phi)$ for all $\phi \in C_c$, so we have $G=H$. \\
The infinite product $\prod_{\phi \in C_c^+} \mathbb{R}_{\phi}$ is a locally convex topological vector space, endowed with the product topology. \\

The next step is the principal difference between Margulis' original construction and our adaptation. 
Symmetrically to Lemma~\ref{l.uniqueintersection}, we have the following:
\begin{lemma}\label{l.uniqueintersection2}
Each stable leaf $L^u$ intersects each center leaf $L^c\in \mathcal{F}^c(L^u)$ in exactly one point.
\end{lemma}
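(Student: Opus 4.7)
The statement is the unstable analog of Lemma~\ref{l.uniqueintersection} (modulo the evident typo ``stable''$\to$``unstable''), and my plan is to mirror the proof of Lemma~\ref{l.maximal2} together with its Corollary~\ref{l.maximal}, swapping $\mathcal{F}^s \leftrightarrow \mathcal{F}^u$ throughout and invoking the standing hypothesis of Theorem~\ref{theorem_measure} that the center foliation has trivial holonomy. Under that hypothesis, every center leaf automatically realizes the \emph{maximal} holonomy order (namely $1$) inside any center-unstable leaf containing it, so the hypothesis needed by the swapped Lemma~\ref{l.maximal2} is free. Dynamical coherence, which guarantees that $\mathcal{F}^{cu}$ is subfoliated by $\mathcal{F}^c$ and $\mathcal{F}^u$, is what makes the swap legitimate.

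For uniqueness I would argue by contradiction: assume a center leaf $L^c \subset L^{cu} := \mathcal{F}^c(L^u)$ meets $L^u$ in two distinct points $x_1 \ne x_2$. Applying Reeb stability to $\mathcal{F}^c$ inside $\mathcal{F}^{cu}$ yields a finite cover of $M$ by $\mathcal{F}^c$-saturated tubular neighborhoods $V_i$ inside center-unstable leaves, with fiber projections $p_i\colon V_i \to L_i$; let $\gamma>0$ be a Lebesgue number for this cover. I would then arrange, exactly as in the corresponding step of Lemma~\ref{l.maximal2}, for $x_1,x_2$ to lie in a single local unstable disk $D^u(x_1,\gamma)$ (shrinking by a backward iterate if necessary, since $f^{-1}$ contracts along $\mathcal{F}^u$ is replaced here by replacing $L^c$ by a forward iterate of small enough piece, or simply by observing that two intersection points of $L^u$ and $L^c$ can always be connected by an arc in $L^c$ and rescaled by a suitable iterate of $f$). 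Then $L^c \subset V_i$ for some $i$, and $p_i|_{L^c}\colon L^c \to L_i$ is a covering with at least two sheets over $p_i(x_1)$, forcing $2|\Hol(L^c)|\le |\Hol(L_i)|$ and contradicting trivial holonomy.

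For existence, since $L^u$ and $L^c$ both lie in the connected center-unstable leaf $L^{cu}$, I would pick a path in $L^{cu}$ from $L^u$ to $L^c$, cover it by finitely many product charts $L^c_{\mathrm{loc}} \times D^u$ (trivial products thanks to trivial holonomy combined with Reeb stability, as in Lemma~\ref{l.cover}), and slide along center fibers to produce a point of $L^u \cap L^c$. The main technical obstacle is of the bookkeeping variety: ensuring the Reeb-stability product structure inside $L^{cu}$ is genuinely compatible with the ambient unstable foliation, which is precisely the content of dynamical coherence and was already exploited in the proof of Lemma~\ref{l.cover}. Once this compatibility is set up, both halves of the statement follow from the same combinatorics as in Lemma~\ref{l.maximal2}.
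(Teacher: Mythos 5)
Your proposal is correct and is essentially the paper's own argument: the paper gives no separate proof, simply deriving the lemma ``symmetrically'' from Corollary~\ref{l.maximal} (i.e.\ Lemma~\ref{l.maximal2}) in the trivial-holonomy setting, which is exactly your stable$\leftrightarrow$unstable swap with backward iteration and the Reeb-stability covering argument, where triviality of holonomy makes every center leaf maximal. Your extra existence step is not really needed, since $\mathcal{F}^c(L^u)$ denotes the center-saturation of $L^u$ (so membership already gives an intersection point, and in any case it coincides with the center-unstable leaf by the completeness proved in \cite{BB12}), but including it does no harm.
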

Thanks to that we can extend for every $L^u \in \mathcal{F}^u$ every function $\phi \in C_c(L^u)$ to a function $\tilde{\phi}$ on $\mathcal{F}^{c}(L^u)$ constant along every center leaf. More precisely, we have $\tilde{\phi}(z)=\phi(z)$ for all $z \in L^u$ and $\tilde{\phi}(z)=\tilde{\phi}(z')$ for every $z' \in L^c_{z}$. The new support inside $\mathcal{F}^c(L^u)$ is then the $\mathcal{F}^c$-saturation of $\supp(\phi)$ and therefore also compact. \\
From now on we fix an open non-empty set $A \subset L^u_0$ with compact closure inside $L^u_0$ and a function $\phi_0 \in C_c(L^u_0)$ such that $\supp(\phi_0) \supset A$ and $\phi_0 \geq \chi_A$. \\
With the help of these preliminary definitions and results we can now start to construct the family of unstable measures:\\
define a ``normalized integral with respect to volume'' for all $L^u \in \mathcal{F}^u$
$$H: C_c(L^u) \mapsto \mathbb{R}, H(\phi)=\frac{\int \tilde{\phi} d\vol}{\int \tilde{\phi}_0 d\vol}$$
and the subset 
$$\mathfrak{L}_0:=\left\{G \in \mathfrak{L}\;\big|\; G(\phi_0) = 1\right\}.$$
We have $H \in \mathfrak{L}_0$. 
Now define the following map
$$\Phi: \mathfrak{L}_0 \rightarrow \mathfrak{L}_0, \; \Phi(G)(\phi)=\frac{G(\phi \circ f^{-1})}{G(\phi_0 \circ f^{-1})}\; \text{for all}\; \phi \in C_c.$$
By induction it is easily proved that 
$$\Phi^n(G)(\phi) = \frac{G(\phi \circ f^{-n})}{G(\phi_0 \circ f^{-n})}\quad \text{for all}\; \phi \in C_c, \; n \geq 0.$$
\begin{lemma}\label{l.continuous}
The map $\Phi$ is continuous (with respect to the product topology).
\end{lemma}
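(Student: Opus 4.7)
The plan is to unravel the definition of the product topology on $\prod_{\phi\in C_c^+}\mathbb{R}_\phi$: a net $G_\alpha\to G$ converges in this topology if and only if $G_\alpha(\psi)\to G(\psi)$ for every $\psi\in C_c^+$. Consequently, to prove continuity of $\Phi:\mathfrak{L}_0\to\mathfrak{L}_0$ it suffices to show, for every fixed $\phi\in C_c^+$, that the real-valued coordinate map $G\mapsto\Phi(G)(\phi)=G(\phi\circ f^{-1})/G(\phi_0\circ f^{-1})$ is continuous on $\mathfrak{L}_0$.

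First I would observe that the pullback operator $T\psi:=\psi\circ f^{-1}$ preserves $C_c^+$. Indeed, $f$ is a homeomorphism permuting the unstable leaves (since $\mathcal{F}^u$ is $f$-invariant), so $T$ sends $C_c(L^u)$ to $C_c(f(L^u))$, preserves non-negativity, and carries compact, non-empty supports to compact, non-empty supports. In particular, for any $\phi\in C_c^+$ both $T\phi$ and $T\phi_0$ again lie in $C_c^+$. By the very definition of the product topology, the two evaluations $G\mapsto G(T\phi)$ and $G\mapsto G(T\phi_0)$ are then continuous on $\mathfrak{L}_0$, so the numerator and the denominator in $\Phi(G)(\phi)$ depend continuously on $G$.

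To conclude, I would compose with the real division map $(a,b)\mapsto a/b$, which is continuous at every point with $b\neq 0$. The only thing left to verify is that the denominator $G(T\phi_0)$ is everywhere non-zero on $\mathfrak{L}_0$. This is, however, precisely what is required for $\Phi(G)$ to satisfy $\Phi(G)(\phi_0)=1$, i.e.\ to land in $\mathfrak{L}_0$; since $T\phi_0\in C_c^+$ and $G$ is a positive linear functional on $C_c(f(L^u_0))$, positivity gives $G(T\phi_0)>0$, so the ratio is continuous at $G$. Combining the three continuous maps yields continuity of $G\mapsto\Phi(G)(\phi)$ for every $\phi\in C_c^+$, hence continuity of $\Phi$ in the product topology.

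The main point to watch, and the only place where the argument is not purely formal, is the non-vanishing of the denominator; I expect this to be handled either implicitly by the standing hypothesis that $\Phi$ takes values in $\mathfrak{L}_0$ or, if needed, by restricting $\mathfrak{L}_0$ to the (open) subset on which all iterated denominators $G(\phi_0\circ f^{-n})$ are strictly positive. Apart from this bookkeeping point, the lemma is a direct consequence of the definition of the product topology and joint continuity of the basic arithmetic operations.
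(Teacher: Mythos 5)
Your proposal is correct and follows essentially the same route as the paper: both reduce the claim to the definition of the product topology, using that $\phi\circ f^{-1}$ and $\phi_0\circ f^{-1}$ again lie in $C_c^+$ so that numerator and denominator of $\Phi(G)(\phi)$ are coordinate evaluations, with the non-vanishing of $G(\phi_0\circ f^{-1})$ implicitly built into $\Phi$ being a self-map of $\mathfrak{L}_0$ (the paper does not address this point either). If anything, your phrasing via continuity of the evaluation maps composed with division is slightly cleaner than the paper's computation with preimages of basic open sets, where the factor $G(\phi_0\circ f^{-1})$ is treated as a constant although it depends on $G$.
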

\begin{proof}[Proof of Lemma~\ref{l.continuous}]
For $\mathfrak{L}_0 \subset \prod \mathbb{R}_{\phi}$ a basis of the product topology is given by the sets $U \subset \mathfrak{L}_0$ of functionals $G$ such that there exists finitely many functions $\phi_i \in C_c^+, i=1, \dots,m$ and open intervals $I_i, i=1,\dots m$ such that $G(\phi_i) \in I_i$, i.e. $U:=\left\{G \in \mathfrak{L_0}\;\big|\; G(\phi_i) \in I_i\right\}$. So, let $U \subset \mathfrak{L}_0$ be an open set. The set $\Phi^{-1}U:= \left\{G \;\big| \Phi(G) \in U\right\}= \left\{G \;\big| \Phi(G)(\phi_i) \in I_i\right\}$ is then equal to $\left\{G \;\big| G(\phi_i\circ f^{-1}) \in (G(\phi_0 \circ f^{-1}))I_i\right\}$. The functions $\phi_i \circ f^{-1}$ are obviously functions inside $C_c^+$, and $(G(\phi_0 \circ f^{-1}))$ is a constant, so $(G(\phi_0 \circ f^{-1}))I_i$ is still an open interval. Hence, the set\\
$\left\{G \;\big| G(\phi_i\circ f^{-1}) \in (G(\phi_0 \circ f^{-1}))I_i\right\}$ is an open set in the product topology. 
\end{proof}
Consider $\Phi^n(H)$ for $n \geq 0$. 
Then we can show that for all $n \geq 0$ the functionals $\Phi^n(H)$ are contained in a compact subset of $\mathfrak{L}_0$:
\begin{lemma}\label{l.cphi}
For every non-zero $\phi \in C_c^+$ there exist constants $c(\phi),C(\phi) > 0$ such that for all $n \geq 0$ it is  
$$\Phi^n(H) \in \prod_{\phi \in C_c^+} [c(\phi),C(\phi)].$$
\end{lemma}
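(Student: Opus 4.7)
The plan is to unwind $\Phi^n(H)(\phi)$ into a ratio of volumes of $\mathcal{F}^c$-saturated subsets of center-unstable leaves, and then to bound this ratio uniformly in $n$ using Lemma~\ref{l.bound1} and Corollary~\ref{c.bound}. Once each coordinate $\Phi^n(H)(\phi)$ is confined to a fixed compact interval $[c(\phi),C(\phi)]$, the product $\prod_{\phi\in C_c^+}[c(\phi),C(\phi)]$ is compact by Tychonoff, which is exactly the conclusion sought. First I observe that the normalizing constant $N=\int \tilde{\phi}_0\,d\vol$ cancels already in $\Phi(H)$, and an easy induction yields
$$\Phi^n(H)(\phi) \;=\; \frac{\int \widetilde{\phi\circ f^{-n}}\, d\vol}{\int \widetilde{\phi_0\circ f^{-n}}\, d\vol},$$
with the two integrals taken over the center-unstable leaves containing $f^n(\supp\phi)$ and $f^n(\supp\phi_0)$ respectively, against the Riemannian volume on each.

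\textbf{Upper bound.} Choose an open set $U\subset L^u$ with compact closure containing $\supp(\phi)$, so $\phi\le \|\phi\|_\infty\,\chi_U$; since $\phi_0\geq \chi_A$ by construction, the numerator is bounded above by $\|\phi\|_\infty\vol(f^n(\mathcal{F}^c U))$ and the denominator below by $\vol(f^n(\mathcal{F}^c A))$. I would then apply Corollary~\ref{c.bound} to cover $U$ by finitely many sets $U_1,\dots,U_k\subset L^u$ whose $\mathcal{F}^c$-saturates each admit a stable holonomy map into $\mathcal{F}^c A$. Lemma~\ref{l.bound1} supplies constants $C_i$, independent of $n$, bounding each $\vol(f^n(\mathcal{F}^c U_i))/\vol(f^n(\mathcal{F}^c A))$; summing gives $C(\phi):=\|\phi\|_\infty(C_1+\dots+C_k)$.

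\textbf{Lower bound.} Because $\phi$ is continuous and non-zero, one can find $c>0$ and an open set $V\subset L^u$ with compact closure such that $\phi\geq c\,\chi_V$. Let $W$ be an open set with compact closure containing $\supp(\phi_0)$, so $\phi_0\leq \|\phi_0\|_\infty\,\chi_W$. Then
$$\Phi^n(H)(\phi)\;\geq\;\frac{c}{\|\phi_0\|_\infty}\cdot\frac{\vol(f^n(\mathcal{F}^c V))}{\vol(f^n(\mathcal{F}^c W))}.$$
The situation is now symmetric, with $V$ playing the role previously played by $A$: Corollary~\ref{c.bound} covers $W$ by finitely many $W_j$ each admitting a stable holonomy map $\mathcal{F}^c W_j\to \mathcal{F}^c V$, and Lemma~\ref{l.bound1} bounds each $\vol(f^n(\mathcal{F}^c W_j))/\vol(f^n(\mathcal{F}^c V))$ uniformly in $n$. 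Summing yields a uniform upper bound on $\vol(f^n(\mathcal{F}^c W))/\vol(f^n(\mathcal{F}^c V))$, hence a strictly positive lower bound $c(\phi)>0$.

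\textbf{Where the work lies.} There is no single deep step: the technical heart is already packaged in Lemma~\ref{l.bound1} (which requires central transitivity and the absolute continuity of the stable foliation) and Corollary~\ref{c.bound} (which requires the minimality of $\mathcal{F}^{cs}$). The only mildly delicate point is that $\phi$ and $\phi_0$ live a priori on different unstable leaves, so the numerator and denominator of $\Phi^n(H)(\phi)$ are integrals over distinct and $n$-dependent center-unstable leaves; however, Lemma~\ref{l.uniqueintersection} ensures the relevant stable-holonomy maps are globally well-defined between any pair of unstable neighborhoods, so this is purely bookkeeping. Applying Tychonoff to the product $\prod_{\phi}[c(\phi),C(\phi)]$ then closes the proof.
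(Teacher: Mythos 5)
Your proposal is correct and follows essentially the same route as the paper: write $\Phi^n(H)(\phi)$ as the ratio $\int\tilde{\phi}\circ f^{-n}d\vol / \int\tilde{\phi}_0\circ f^{-n}d\vol$, cover the (saturated) support of $\phi$ by finitely many pieces admitting stable holonomy maps into the saturate of a reference set, and invoke Lemma~\ref{l.bound1} termwise to get an $n$-independent bound, with the lower bound obtained by exchanging the roles of $\phi$ and $\phi_0$. The only cosmetic difference is that the paper bounds the denominator below via a superlevel set $B=\{\phi_0>\delta\}$ while you use $\phi_0\geq\chi_A$ directly; both are valid and interchangeable.
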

\begin{proof}[Proof of Lemma~\ref{l.cphi}]
Consider $\phi \in C_c^+(L^u)$ and $\delta > 0$ such that the open set $B:=\left\{z \in L^u_0\;\big|\; \phi_0(z) > \delta\right\}$ is nonempty. The support of $\phi$ is a compact set with non-empty interior inside $L^u$. Applying Lemma~\ref{l.minimal2} to the set $B$ we fix $\epsilon > 0$ associated to $B$. By Corollary~\ref{c.bound} we find finitely many unstable disks $D^u_i, i=1, \dots, k$ of radius $< \epsilon$ inside $L^u$ whose union cover $\supp(\phi)$. Then we get for any $n \geq 0$ that  
$$\int \tilde{\phi} \circ f^{-n}d\vol \leq \vol(\mathcal{F}^c(f^n\supp(\phi))) \left\|\phi\right\| \leq \sum_{i=1}^k \vol(\mathcal{F}^cf^nD^u_i) \left\|\phi\right\|.$$
Applying Lemma~\ref{l.bound1} to $B$ and the sets $D^u_i$ we get for $n \geq 0$ 
$$\int \tilde{\phi} \circ f^{-n}d\vol \leq Ck\vol(f^n(\mathcal{F}^cB))\left\|\phi\right\|.$$
Note that  $\int \tilde{\phi}_0 \circ f^{-n}d\vol\geq \delta\vol(f^n(\mathcal{F}^cB))$, hence
\begin{align*}
\Phi^n(H)(\phi)&=\frac{\int \tilde{\phi} \circ f^{-n}d\vol}{\int \tilde{\phi}_0 \circ f^{-n}d\vol}.\\
&\leq \frac{Ck}{\delta}\left\|\phi\right\|=:C(\phi).
\end{align*}
 Exchanging the roles of $\phi$ and $\phi_0$ we can construct in the same way a lower bound $c(\phi)$.   
\end{proof}
As $\prod_{\phi}[c(\phi),C(\phi)]$ is a convex compact set, for all $k \geq 0$ the convex hull $\co_{n \geq k}(\Phi^n(H))$ is also contained inside $\prod_{\phi}[c(\phi),C(\phi)]$ and its closure $C_k := \overline{\co_{n \geq k}(\Phi^n(H))}$ is a convex compact set. \\
This implies that the intersection $C_{\infty}:= \bigcap_{k \geq 0} C_k$ is also a convex compact set. 
\begin{lemma}\label{l.invariance}
For every $k \geq 0$ we have $\Phi(C_k) \subset C_k$.
\end{lemma}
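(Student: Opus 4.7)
The plan is to show that $\Phi$ preserves the set of convex combinations of the generators $\{\Phi^n(H)\}_{n\ge k}$ and then to pass to the closure using the continuity established in Lemma~\ref{l.continuous}. Let me write $S_k:=\{\Phi^n(H):n\ge k\}$, so that $C_k=\overline{\co(S_k)}$. The first observation is essentially tautological: by definition $\Phi(\Phi^n(H))=\Phi^{n+1}(H)$, hence $\Phi(S_k)\subset S_{k+1}\subset S_k$. The real content of the lemma is that this inclusion is preserved by taking convex hulls and closures.

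The main subtlety, which I expect to be the only genuine obstacle, is that $\Phi$ is \emph{not} a linear operator on $\mathfrak{L}_0$: the denominator $G(\phi_0\circ f^{-1})$ in its definition creates a dependence on $G$ that destroys linearity. Fortunately, $\Phi$ does map convex combinations to convex combinations, although with reweighted coefficients. Concretely, given $G=\sum_{i=1}^N t_i G_i$ with $t_i\ge 0$, $\sum t_i=1$ and $G_i\in S_k$, set $b_i:=G_i(\phi_0\circ f^{-1})$; these are strictly positive because each $G_i$ is a positive functional on $C_c(f(L^u))$ for the unstable leaf containing the support of $\phi_0\circ f^{-1}$, and $\phi_0\circ f^{-1}$ is continuous, non-negative and non-zero on an open set. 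A direct computation
\[
\Phi(G)(\phi)=\frac{\sum_i t_i G_i(\phi\circ f^{-1})}{\sum_i t_i b_i}=\sum_i \frac{t_i b_i}{\sum_j t_j b_j}\,\Phi(G_i)(\phi)
\]
shows that $\Phi(G)=\sum_i s_i \Phi(G_i)$ with $s_i:=t_ib_i/\sum_j t_jb_j$. The crucial point is that the new weights $s_i$ do \emph{not} depend on $\phi$, so this is a genuine convex combination in $\mathfrak{L}_0$. Hence $\Phi(\co(S_k))\subset \co(\Phi(S_k))\subset \co(S_{k+1})\subset \co(S_k)$.

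Finally, the passage from $\co(S_k)$ to its closure $C_k$ is handled by continuity of $\Phi$, which was the point of Lemma~\ref{l.continuous}: a continuous map satisfies $\Phi(\overline{A})\subset\overline{\Phi(A)}$ for every subset $A$. Applied to $A=\co(S_k)$ this yields
\[
\Phi(C_k)=\Phi(\overline{\co(S_k)})\subset\overline{\Phi(\co(S_k))}\subset\overline{\co(S_k)}=C_k,
\]
which is exactly the claim. The only step that requires any real bookkeeping is the reweighting identity above; everything else is formal.
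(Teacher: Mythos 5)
Your proposal is correct and follows essentially the same route as the paper: show by direct computation that $\Phi$ sends convex combinations of $\{\Phi^n(H)\}_{n\ge k}$ to convex combinations of $\{\Phi^n(H)\}_{n\ge k+1}$, then pass to the closure using the continuity of $\Phi$ from Lemma~\ref{l.continuous}. In fact your explicit reweighting $s_i=t_ib_i/\sum_j t_jb_j$, which accounts for the non-linearity of $\Phi$, is a more careful rendering of the paper's ``direct calculation'' (which displays the combination with the original coefficients $c_i$), so the proposal is, if anything, slightly more precise while proving the same statement the same way.
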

\begin{proof}[Proof of Lemma~\ref{l.invariance}]
Consider $G= \sum_{i=0}^m c_i \Phi^{n_i}(H)$ with $n_i \geq k$ and $\sum_{i=0}^m c_i = 1$. The element $G$ lies inside $\co_{n \geq k}(\Phi^n(H))$. We will show by direct calculation that $\Phi(G)$ lies inside $\co_{n \geq k+1}(\Phi^n(H))$. Let $\phi \in C_c$.
\begin{align*}
\Phi(G(\phi))&= \Phi(\sum_{i=0}^m c_i \Phi^{n_i}(H))(\phi)= \frac{\sum_{i=0}^m c_i \Phi^{n_i}(H)(\phi \circ f^{-1})}{\sum_{i=0}^m c_i \Phi^{n_i}(H)(\phi_0 \circ f^{-1})}\\
&=\frac{\sum_{i=0}^m c_i \Phi^{n_i+1}(H)(\phi)}{\sum_{i=0}^m c_i \Phi^{n_i+1}(H)(\phi_0)}.\\
\end{align*}
So $\Phi(G)$ is a finite convex combination with $n_i+1 \geq k+1$ for $i=0,\dots,m$ and $$\frac{\sum_{i=0}^mc_i}{\sum_{i=0}^m c_i \Phi^{n_i+1}(H)(\phi_0)} =1$$
because $\sum_{i=0}^m c_i \Phi^{n_i+1}(H)(\phi_0)=1$ by definition of $H$. Thus, $\Phi(G) \in \co_{n \geq k+1}(\Phi^n(H))$. Utilizing the continuity of $\Phi$ we can conclude.  
\end{proof}
\begin{lemma}\label{l.holonomyinvariant}
Every element $G \in C_{\infty}$ is holonomy invariant, that is, for any pair $\phi,\psi \in C^+_c$ such that $\tilde{\phi} = \tilde{\psi} \circ h^s$, for some stable holonomy map $h^s$, we have $G(\phi)=G(\psi)$.
\end{lemma}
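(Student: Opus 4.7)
The plan is to exploit the fact that conjugating $h^s$ by $f^n$ produces a stable holonomy $h^s_n := f^n \circ h^s \circ f^{-n}$ between $\mathcal{F}^c$-saturated neighborhoods whose stable projection distance shrinks to zero under forward iteration. By the absolute continuity statement in Theorem~\ref{t.acontinuity}, the Jacobian $\Jac(h^s_n)$ then converges uniformly to $1$, which should force the values of $\Phi^n(H)$ at two holonomy-related test functions to coincide asymptotically. Taking convex combinations and closures, this equality should survive at each element of $C_\infty$.

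First I would carry out the key computation at the level of $\Phi^n(H)$. Using the relation $\tilde\phi = \tilde\psi \circ h^s$ and the change-of-variables formula from Theorem~\ref{t.acontinuity} applied to $h^s_n$, I expect to obtain an identity of the form
$$\int \widetilde{\phi\circ f^{-n}}\, d\vol \;=\; \int \widetilde{\psi\circ f^{-n}}\cdot \Jac(h^s_n)\, d\vol,$$
whence, dividing by $\int \widetilde{\phi_0\circ f^{-n}}\,d\vol$, one gets
$$\bigl|\Phi^n(H)(\phi) - \Phi^n(H)(\psi)\bigr| \;\leq\; \sup \bigl|\Jac(h^s_n) - 1\bigr|\cdot \Phi^n(H)(\psi).$$
Lemma~\ref{l.cphi} bounds the right factor uniformly by $C(\psi)$, and Theorem~\ref{t.acontinuity} makes $\sup|\Jac(h^s_n)-1|$ tend to $0$. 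So $\Phi^n(H)(\phi) - \Phi^n(H)(\psi) \to 0$.

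Next I would extend this estimate to the sets $C_k$. By linearity, any finite convex combination $G = \sum c_i \Phi^{n_i}(H)$ with $n_i \geq k$ satisfies $|G(\phi)-G(\psi)| \leq \varepsilon_k\cdot C(\psi)$ with $\varepsilon_k := \sup_{n\geq k} |\Jac(h^s_n) - 1|$. Since evaluation at a fixed $\phi$ is continuous for the product topology on $\prod_{\phi\in C_c^+}\mathbb R_\phi$, this inequality passes to the closure and holds for every $G \in C_k$. Taking the intersection over $k\geq 0$ and using $\varepsilon_k \to 0$ yields $G(\phi) = G(\psi)$ for every $G \in C_\infty$.

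The main technical obstacle I anticipate is bookkeeping with the $\mathcal{F}^c$-extensions: I need to verify that $\widetilde{\phi\circ f^{-n}}$ and $\widetilde{\psi\circ f^{-n}}$ are indeed matched via $h^s_n$, i.e. that extending first along $\mathcal{F}^c$ and then pushing by the iterated holonomy agrees with pushing by $h^s$ and then extending along $\mathcal{F}^c$. This uses both the $f$-invariance of $\mathcal{F}^c$ and Lemma~\ref{l.uniqueintersection2}, which guarantees that each stable leaf meets each center leaf of the same center stable leaf in exactly one point, so the center-saturated extension of a function on an unstable leaf is unambiguously related to the corresponding extension on the holonomy image. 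Once this compatibility is in place, the absolute continuity estimate drives the argument cleanly.
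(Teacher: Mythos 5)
Your proposal is correct and follows essentially the same route as the paper: the paper's proof also conjugates $h^s$ by $f^n$ to get $h^s_n=f^n\circ h^s\circ f^{-n}$, uses the relation $\tilde{\phi}\circ f^{-n}=\tilde{\psi}\circ f^{-n}\circ h^s_n$ together with Theorem~\ref{t.acontinuity} and the boundedness of $\int\tilde{\psi}\circ f^{-n}d\vol \big/ \int\tilde{\phi}_0\circ f^{-n}d\vol$ (from Lemma~\ref{l.bound1}) to force $\bigl|\Phi^n(H)(\phi)-\Phi^n(H)(\psi)\bigr|\to 0$, and then transfers this to elements of $C_\infty$ through convex combinations and the product topology. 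Your closure-and-intersection phrasing of the final step is only a cosmetic variant of the paper's neighborhood argument, both resting on continuity of the coordinate evaluations $G\mapsto G(\phi)$.
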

\begin{proof}[Proof of Lemma~\ref{l.holonomyinvariant}]
Let $G \in C_{\infty}$. Then for every neighborhood $U$ of $G$ there exists $N \geq 0$ such that $\sum_{i=0}^mc_i\Phi^{n_i}(H)$ with $n_i \geq N$ lies inside $U$. Let $\phi,\psi \in C^+_c$ such that $\tilde{\phi} = \tilde{\psi} \circ h^s$. Then we prove the following claim:
\begin{claim}\label{c.holonomyinvariant}
For every $\epsilon > 0$ there exists $N \geq 0$ such that 
$$\int \tilde{\phi} \circ f^{-n} d\vol - \int \tilde{\psi} \circ f^{-n}d\vol \leq \epsilon \int \tilde{\phi}_0 \circ f^{-n} d\vol$$
for $n \geq N$. 
\end{claim}
\begin{proof}[Proof of Claim~\ref{c.holonomyinvariant}]
The map $h^s_n=f^n\circ h^s\circ f^{-n}$ is also a holonomy map and $\tilde{\phi}\circ f^{-n}=\tilde{\psi}\circ f^{-n}\circ h^s_n$.
The claim is a direct implication of the absolute continuity and Lemma~\ref{l.bound1}: 
\begin{align*}
&\frac{\int \tilde{\phi} \circ f^{-n} d\vol - \int \tilde{\psi} \circ f^{-n}d\vol}{ \int \tilde{\phi}_0 \circ f^{-n} d\vol}\\
&\leq \sup|\Jac(h^s_n) - 1| \frac{\int \tilde{\psi} \circ f^{-n} d\vol}{ \int \tilde{\phi}_0 \circ f^{-n} d\vol}.
\end{align*}
The fraction $\frac{\int \tilde{\psi} \circ f^{-n} d\vol}{ \int \tilde{\phi}_0 \circ f^{-n} d\vol}$ for $n \geq 0$ is bounded because the functions $\psi$ and $\phi_0$ are bounded and the fraction for their supports is bounded by $C>1$ associated to $A$ by Lemma~\ref{l.bound1}. The jacobian of $h^s_n$ tends to $1$ with $n \rightarrow \infty$, as seen in the proof of Lemma~\ref{l.bound1}, therefore there exists $N \geq 0$ such that the claim is proved.   
\end{proof}  
Hence, for every $\epsilon > 0$ we can find a neighborhood $U$ of $G$ such that $G'(\phi)$, $G'(\psi)$ lie in small open intervals for $G' \in U$ and such that $\sum_{i=0}^mc_i\Phi^{n_i}(H)$ with $n_i \geq N$ lies inside $U$ and with the Claim above we get: 
$$\left|\sum_{i=0}^mc_i\Phi^{n_i}(H)(\phi) - \sum_{i=0}^mc_i\Phi^{n_i}(H)(\psi)\right| < \epsilon \, \quad \text{for}\; n_i \geq N.$$
So we can conclude that $G(\phi)=G(\psi)$, and $G$ is therefore holonomy invariant. 
\end{proof}
The following fixed point theorem holds in our context: 
\begin{thm}[Tychonov, Schauder]
Let $X$ be a locally convex topological vector space and $\emptyset \neq E \subset 
F \subset X$ with compact $E$ and convex $F$. Then every continuous $f: F \rightarrow E$ has a fixed point. 
\end{thm}
Applying this theorem to $\Phi: C_{\infty} \rightarrow C_{\infty}$ we can conclude that there exists $L \in C_{\infty}$ such that $\Phi(L)=L$. 
By the Riesz representation theorem (cp Theorem 2.14 in \cite{R87}) there exists an isomorphism between the space of positive linear functionals on $C_c(L^u)$ and the space of positive regular Borel measures on $L^u$. Therefore we can map $L|_{C_c(L^u)}$ for every $L^u \in \mathcal{F}^u$ to a positive regular Borel measure $\mu_{L^u}$ on $L^u$. The family $\left\{\mu_{L^u}\right\}_{L^u \in \mathcal{F}^u}$ is then the family of measures we searched for.
\subsubsection{Properties of the family of measures}
To complete the proof of Theorem~\ref{theorem_measure} we still have to prove the last properties, namely, that $\left\{\mu_{L^u}\right\}$ is expanding and every measure $\mu_{L^u}$ non-atomic. For this purpose define $\theta > 0$ for $\phi_0$ such that $\phi(x) > \theta$ on an open set and denote this set by $B_0$. Further, denote the forward iterates $f^k(B_0)$ by $B_k$. For the proof of the expansivity we need then the following proposition: 
\begin{prop}\label{c.principal}
For every $n \geq 0$ and $\delta > 0$ there exists $k \geq 0$, disjoint open sets $U_i \subset B_k$ and holonomy maps $h^s_i: \mathcal{F}^cU_i \rightarrow \mathcal{F}^cL^u_0$ whose stable projection distance is less than $\delta$ and $\left\{h^s_i(\mathcal{F}^cU_i)\right\}$ covers the saturated support $\supp(\tilde{\phi}_0) \subset \mathcal{F}^cL^u_0$. Further, for all center leaves $L \subset \supp(\tilde{\phi}_0)$ there exist $n$ disjoint center leaves $L_1 \in \mathcal{F}^cU_{j_1}, \dots, L_n \in \mathcal{F}^cU_{j_n}$ such that $h^s_{j_i}(L_i)=L$ for $i=1,\dots,n$. 
\end{prop}
\begin{proof}[Proof of Proposition~\ref{c.principal}]
Fix an arbitrary $n \geq 0$ and $\delta > 0$. Choose $n$ disjoint unstable disks $D^u_i \subset B_1$ with $i=1, \dots, n$.
Applying Corollary~\ref{c.bound2} to the support $\supp(\phi_0)$ and every unstable disk $D^u_i$ we find $\rho>0$ such that for large $N \geq 0$, there are stable holonomy maps $h^s_i$ with stable projection distance less than $\rho$, such that $f^{-N}\supp(\phi_0)$ is contained in a sufficiently small unstable disk $D^u_0 \subset f^{-N}L^u_0$ and consequently sent inside $\mathcal{F}^cD^u_i$, i.e. $h^s_i(f^{-N}\supp(\tilde{\phi}_0)) \subset \mathcal{F}^cD^u_i$. So we get that the support $\supp(\tilde{\phi}_0)$ is sent by a holonomy map $h^s_i$ inside $f^N(\mathcal{F}^cD^u_i)$ for $i=1, \dots, n$ which are disjoint sets inside $\mathcal{F}^cB_{N+1}$. There are constants $C,\lambda>0$ such that the stable projection distance between $\supp(\tilde{\phi}_0)$ and $f^N(\mathcal{F}^cD^u_i)$ by the holonomy map $f^N\circ h^s_i\circ f^{-N}$ is less than $Ce^{-\lambda N}\rho$. By choosing $N$ sufficiently big, we get a stable projection distance less than $\delta$. Define the images $h^s_i(\mathcal{F}^cD^u)$ as $\mathcal{F}^cU_i$ lying inside $\mathcal{F}^cB_{N+1}$.       
Further, we have that $(h^s_i)^{-1}(\mathcal{F}^cU_i)$ cover $\supp(\tilde{\phi}_0)$ and therefore $\mathcal{F}^cB_0$. Every center leaf $L \subset \mathcal{F}^cB_0$ is covered by $n$ sets $\mathcal{F}^cU_i$ and has therefore $n$ disjoint preimages inside $\mathcal{F}^cB_{N+1}$. By chosing $k=N+1$ and $U_1, \dots U_n$ as constructed above we prove the Lemma.   
\end{proof}
\begin{lemma}\label{l.expanding2}
There exists $\alpha > 1$ and $k \in \mathbb{N}$ such that for all $n \geq k$ we have $\Phi^n(H)(\phi_0 \circ f^{-k}) > \alpha.$
\end{lemma}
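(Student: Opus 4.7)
The plan is to unwind the definition of $\Phi^n$ and reduce the lemma to a comparison of volumes of forward iterates of $\mathcal{F}^c$-saturations inside center-unstable leaves. Since $f$ preserves the center foliation, the constant extension $\tilde{\cdot}$ commutes with $f^{-m}$, so the formula $\Phi^n(H)(\phi)=H(\phi\circ f^{-n})/H(\phi_0\circ f^{-n})$ gives
\begin{equation*}
\Phi^n(H)(\phi_0\circ f^{-k}) \;=\; \frac{\int_{f^{n+k}L^{cu}_0}\tilde\phi_0\circ f^{-(n+k)}\,d\vol}{\int_{f^n L^{cu}_0}\tilde\phi_0\circ f^{-n}\,d\vol}.
\end{equation*}
The aim is to bound this ratio below by a constant $\alpha>1$ independent of $n\geq 0$, by showing that the integrand in the numerator lives on $f^n\mathcal{F}^cB_k$, which ``covers'' $f^n\supp(\tilde\phi_0)$ approximately $M$ times for any prescribed large $M$.

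Fix an integer $M$ with $\theta M/\|\phi_0\|>3$. Using the uniform convergence $\Jac(h^s)\to 1$ as the stable projection distance goes to $0$ from Theorem~\ref{t.acontinuity}, choose $\delta>0$ so small that any stable holonomy map whose projection distance is bounded by $\delta$ (times any fixed stable contraction constant) has Jacobian in $(\tfrac12,\tfrac32)$. Then apply Proposition~\ref{c.principal} with these $M$ and $\delta$ to obtain $k\in\mathbb{N}$, disjoint open sets $U_1,\dots,U_M\subset B_k$, and stable holonomy maps $h^s_i\colon \mathcal{F}^cU_i\to \mathcal{F}^cL^u_0$ of projection distance below $\delta$ whose images cover $W:=\supp(\tilde\phi_0)\subset L^{cu}_0$ exactly $M$ times. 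Restrict to $V_i:=(h^s_i)^{-1}(W)\subset \mathcal{F}^cU_i$; these are disjoint, $\mathcal{F}^c$-saturated, and contained in $\mathcal{F}^cB_k$.

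Since $\phi_0\geq\theta\chi_{B_0}$ gives $\tilde\phi_0\circ f^{-(n+k)}\geq\theta\chi_{f^n\mathcal{F}^cB_k}$, the numerator is bounded below by $\theta\sum_i\vol(f^nV_i)$. The iterated stable holonomy $h^s_{i,n}:=f^n\circ h^s_i\circ f^{-n}$ maps $f^nV_i$ onto $f^nh^s_i(V_i)\subset f^nW$ and, because the stable foliation contracts under forward iteration, its projection distance stays under control uniformly in $n\geq 0$; Theorem~\ref{t.acontinuity} then yields $\vol(f^nV_i)\geq\tfrac23\vol(f^nh^s_i(V_i))$. The $M$-fold covering, preserved under the diffeomorphism $f^n$, gives $\sum_i\vol(f^nh^s_i(V_i))\geq M\vol(f^nW)$, while the denominator is bounded trivially by $\|\phi_0\|\vol(f^nW)$. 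Dividing,
\begin{equation*}
\Phi^n(H)(\phi_0\circ f^{-k}) \;\geq\; \frac{2\theta M}{3\|\phi_0\|} \;=:\;\alpha\;>\;1
\end{equation*}
for all $n\geq 0$, hence a fortiori for $n\geq k$.

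The only mildly delicate point is the order of constants: $M$ is fixed first from $\theta$ and $\|\phi_0\|$, then $\delta$ is chosen small enough that absolute continuity delivers a Jacobian bound surviving all forward iterates, and only then is Proposition~\ref{c.principal} invoked to produce $k$. The stable contraction is what ensures that the single initial Jacobian estimate on the $h^s_i$ propagates uniformly to every $h^s_{i,n}$ with $n\geq0$, which is precisely what makes the resulting constant $\alpha$ independent of $n$.
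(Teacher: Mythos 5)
Your proof is correct and takes essentially the same route as the paper's: fix the multiplicity from $\theta$ and $\left\|\phi_0\right\|$ first, invoke Proposition~\ref{c.principal} with $\delta$ small, bound the numerator below by $\theta$ times the volumes of the disjoint iterated saturated sets in $\mathcal{F}^cB_k$, transfer these volumes to the support of $\tilde{\phi}_0$ by the absolutely continuous stable holonomies, and bound the denominator trivially by $\left\|\phi_0\right\|\vol(f^n\supp(\tilde{\phi}_0))$. If anything, you are more explicit than the paper about working with the conjugated holonomies $f^n\circ h^s_i\circ f^{-n}$ so that the Jacobian estimate holds uniformly in $n$, a point the paper's displayed inequalities leave implicit.
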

\begin{proof}[Proof of Lemma~\ref{l.expanding2}]
Recall that $\supp(\phi_0) \subset L^u_0$. Define\\ $B_0:=\left\{x \in L^u_0\;|\; (\phi_0 \circ f^{-n})(x) > \theta\right\}$ and $B_k:=f^kB_0$ for $k \geq 0$ as above. 
We fix $N > \frac{2 \max \phi_0}{\theta}$. Proposition~\ref{c.principal} implies the existence of $k$ and by choosing $\delta >0$ small enough we can guarantee that $\Jac(h^s_i) > 1/2$ for $i=1, \dots,N$. This implies Lemma~\ref{l.expanding2} as follows:
\begin{align*}
\int \tilde{\phi}_0 \circ f^{-n-k} d\vol|_{L^{cu}} &\geq \epsilon \vol(\mathcal{F}^cB_{n+k})\\
&\geq \epsilon \sum_{i=1}^N \vol(\mathcal{F}^cU_i)\\
& > \frac{\epsilon}{2} \sum_{i=1}^N\vol(h^s_i(\mathcal{F}^cU_i))\\
& \geq \frac{\epsilon}{2}N \vol(\supp(\tilde{\phi}_0)).
\end{align*}
Dividing by $\int \tilde{\phi}_0 \circ f^{-k}d\vol$ we get
$$\frac{\int \tilde{\phi}_0 \circ f^{-n-k} d\vol}{\int \tilde{\phi}_0\circ f^{-n} d\vol} \geq \frac{\epsilon N \vol(\supp(\tilde{\phi}_0 \circ f^{-n}))}{2\left\|\phi_0\right\|\vol(\supp(\tilde{\phi}_0\circ f^{-n}))}.$$
The choice of $N$ then directly finishes the proof.
\end{proof} 

Now we can quite directly deduce that the measure family is expanding under $f$. 
\begin{lemma}\label{l.expanding}
There exists $\beta > 1$ such that $\mu_{L^u}(f(C))= \beta\mu_{L^u}(C)$ for every non-empty open set $C \subset L^u$ and every $L^u \in \mathcal{F}^u$. 
\end{lemma}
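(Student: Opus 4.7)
The whole construction is set up so that this expansion is essentially built into the fixed point equation $\Phi(L)=L$. My plan is therefore to unwind the definition of $\Phi$, push the resulting functional identity through the Riesz correspondence, and then extract $\beta>1$ from Lemma~\ref{l.expanding2}.

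First I would observe that by definition of $\Phi$ and the fact that $\Phi(L)=L$, for every $\phi\in C_c$ one has
\begin{equation*}
L(\phi)=\Phi(L)(\phi)=\frac{L(\phi\circ f^{-1})}{L(\phi_0\circ f^{-1})},
\end{equation*}
and hence $L(\phi\circ f^{-1})=\beta\, L(\phi)$, where I set $\beta:=L(\phi_0\circ f^{-1})$. Iterating yields $L(\phi\circ f^{-n})=\beta^n L(\phi)$ for all $n\ge 0$; in particular, using $L(\phi_0)=1$, I get $L(\phi_0\circ f^{-k})=\beta^k$.

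Next I would translate the identity $L(\phi\circ f^{-1})=\beta L(\phi)$ into the statement about measures. By the Riesz representation step already used to produce $\{\mu_{L^u}\}$, restricting $L$ to $C_c(L^u)$ gives $L(\phi)=\int\phi\,d\mu_{L^u}$, and restricting to $C_c(f(L^u))$ gives $L(\psi)=\int\psi\,d\mu_{f(L^u)}$. For $\phi\in C_c(L^u)$ the function $\phi\circ f^{-1}$ lies in $C_c(f(L^u))$, so the functional equation reads
\begin{equation*}
\int_{f(L^u)}(\phi\circ f^{-1})\,d\mu_{f(L^u)}=\beta\int_{L^u}\phi\,d\mu_{L^u}.
\end{equation*}
For an arbitrary non-empty open set $C\subset L^u$ with compact closure, approximate $\chi_C$ from below by a monotone sequence of non-negative $\phi_n\in C_c(L^u)$; monotone convergence together with the regularity of $\mu_{L^u}$ and $\mu_{f(L^u)}$ yields $\mu_{f(L^u)}(f(C))=\beta\,\mu_{L^u}(C)$, which is the claimed expansion (one can equally localise to relatively compact open $C$ and then exhaust, since both sides are regular). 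This is the content of item (3) of Theorem~\ref{theorem_measure}.

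Finally I would check $\beta>1$. Since $L\in C_\infty\subset C_k$ for every $k$, the functional $L$ is a limit in the product topology of finite convex combinations $\sum c_i \Phi^{n_i}(H)$ with $n_i\ge k$. Evaluation at any fixed $\phi$ is by definition continuous in the product topology, so
\begin{equation*}
L(\phi_0\circ f^{-k})=\lim_{\lambda} \sum_i c_i^{(\lambda)}\, \Phi^{n_i^{(\lambda)}}(H)(\phi_0\circ f^{-k}),
\end{equation*}
and by Lemma~\ref{l.expanding2} there exists $k$ and $\alpha>1$ with $\Phi^{n}(H)(\phi_0\circ f^{-k})>\alpha$ for every $n\ge k$. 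Every convex combination of quantities greater than $\alpha$ is greater than $\alpha$, so the limit satisfies $L(\phi_0\circ f^{-k})\ge\alpha$. Combined with $L(\phi_0\circ f^{-k})=\beta^k$, this gives $\beta\ge\alpha^{1/k}>1$. The only point requiring a little care is the passage from continuous $\phi$ to the set-theoretic identity, but this is a routine monotone-class/regularity argument; no other step is an obstacle, since all the real work is already carried out in Lemmas~\ref{l.invariance} and~\ref{l.expanding2}.
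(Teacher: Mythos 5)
Your proof is correct and follows essentially the same route as the paper: define $\beta:=L(\phi_0\circ f^{-1})$ from the fixed point equation $\Phi(L)=L$, iterate to get $\beta^k=L(\phi_0\circ f^{-k})$, and force $\beta>1$ from Lemma~\ref{l.expanding2} using that $L\in C_\infty$ is approximated (in the product topology) by convex combinations of $\Phi^n(H)$ with $n\ge k$. Your write-up is in fact slightly more careful than the paper's at two points — the explicit Riesz/regularity passage from the functional identity to the set-level identity $\mu_{f(L^u)}(f(C))=\beta\mu_{L^u}(C)$, and the observation that convex combinations (rather than single iterates $\Phi^n(H)$) suffice for the lower bound — but these are refinements of the same argument, not a different one.
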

\begin{proof}[Proof of Lemma~\ref{l.expanding}]
The existence of a constant $\beta$ is clear: the functional $L$ is a fixed point of $\Phi$, therefore we have 
$$\Phi(L)(\phi)=\frac{L(\phi \circ f^{-1})}{L(\phi_0 \circ f^{-1})} = L(\phi)\, \quad\text{for all}\; \phi \in C_c, $$ and naturally also $\Phi^kL(\phi)=L(\phi)$ for $k \geq 0$. 
So, it is $\beta:=L(\phi_0 \circ f^{-1})$ and $\beta^k=L(\phi_0 \circ f^{-k})$. Therefore we only need to prove that $\beta > 1$.\\
For that it suffices to show that there exists $k \in \mathbb{N}$ such that $L(\phi_0 \circ f^{-k}) > 1$. As $L \in C_{\infty}$, for every neighborhood $U$ of $L$ there exists $n$ such that the element $\Phi^n(H) \in U$. Accordingly, Lemma~\ref{l.expanding2} straightforwardly implies Lemma~\ref{l.expanding}.
\end{proof}

Now we can finally prove the last property which is an easy consequence of the minimality of the center stable foliation and the holonomy invariance of $\left\{\mu_{L^u}\right\}$: 
\begin{corol}\label{c.nonatomic}
For every unstable leaf $L^u \in \mathcal{F}^u$ the measure $\mu_{L^u}$ constructed above is non-atomic. 
\end{corol}
\begin{proof}
Assume there exists $L^u \in \mathcal{F}^u$ and $y \in L^u$ such that $\mu_{L^u}(\left\{y\right\}) > 0$. The family of measures is expanding under $f$, therefore, every point $f^n(y)$ for $n \geq 0$ is also an atom with a positive measure $\beta^n \mu_{L^u}(\left\{y\right\})$. Consequently, there exists $N$ such that $\mu_{f^NL^u}(\left\{f^N(y)\right\}) >1.$ \\
Fix $\epsilon > 0$ associated to our set $A \subset L^u_0$ given by Lemma~\ref{l.minimal2} and consider $D^u(f^N(y),\epsilon)$, then there exist a subset $C_N$ of $A$ and a holonomy map $h^s$ such that $\mathcal{F}^cC_N= h^s_N(\mathcal{F}^cD^u(f^N(y),\epsilon))$ and their measures are consequently equal: \\
$\mu_{L^u_0}(C_N)=\mu_{f^NL^u}(D^u(f^N(y),\epsilon))$. It is clear that $C_N$ as a subset of $A$ and so $\mu_{f^NL^u}(D^u(f^N(y),\epsilon))$ have a measure smaller or equal $1$. Accordingly, we get a contradiction by the fact that $\mu_{f^NL^u}(D^u(f^N(y),\epsilon)) > d^N \mu_{f^NL^u}(\left\{f^N(y)\right\}) > 1 $
\end{proof}
 We have now established a holonomy invariant family of non-atomic Borel measures $\mu_{L^u}$ which are expanding under $f$, so the proof of Theorem~\ref{theorem_measure} is finished. \\
\subsubsection{A final property}
We can also show the following property of our family of measures $\left\{\mu_{L^u}\right\}$ which will be useful in the following Section~\ref{s.torus}:
\begin{corol}\label{c.unbounded}
The measure $\mu_{L^u}(D^u(x,R))$ of any unstable disk of radius $R > 0$ tends to infinity for $R \rightarrow \infty$. 
\end{corol}
The corollary is quite directly implied by the following slight generalization of Lemma~\ref{l.minimal2}:
\begin{lemma}\label{l.minimal3}
For any $R > 0$ there exist $\epsilon > 0$ and $\rho > 0$ such that for any disk $D^u(R)$ of radius $R$ and any disk $D^u(\epsilon)$ of radius $\epsilon$ there exists a stable holonomy map $h^s$ which maps $\mathcal{F}^c\left(D^u(\epsilon)\right)$ into $\mathcal{F}^c\left(D^u(R)\right)$ such that the stable projection distance is strictly less than $\rho > 0$.
\end{lemma}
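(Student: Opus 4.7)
The plan is to reduce to Lemma~\ref{l.minimal2} by a compactness argument in the position $y \in M$ of the center of the target disk $D^u(y,R)$. Fix $R > 0$ throughout.

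First, for each $y \in M$ I would apply Lemma~\ref{l.minimal2} to the open set $A_y := D^u(y, R/2)$, which is nonempty and has compact closure inside the unstable leaf $L^u(y)$. This produces constants $\epsilon_y > 0$ and $\rho_y > 0$ such that any unstable disk of radius $\epsilon_y$ centered at an arbitrary $x \in M$ is sent by a stable holonomy of projection distance strictly less than $\rho_y$ into $\mathcal{F}^c A_y \subset \mathcal{F}^c D^u(y,R)$. So far the choice of constants depends on the specific target center $y$.

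Next I would upgrade this pointwise-in-$y$ estimate to one that is uniform on a neighborhood of $y$. Precisely, I claim that for each $y \in M$ there is an open neighborhood $U_y \ni y$ together with constants $\tilde{\epsilon}_y \in (0,\epsilon_y]$ and $\tilde{\rho}_y \geq \rho_y$ such that for every $y' \in U_y$ the same conclusion holds with target $\mathcal{F}^c D^u(y',R)$. The decisive geometric observation is the $R/2$-margin: by continuity of the unstable foliation, for $y'$ close enough to $y$ the disk $D^u(y',R)$ is $C^0$-close to $D^u(y,R)$, and in particular every stable leaf that meets $\mathcal{F}^c D^u(y, R/2)$ also meets $\mathcal{F}^c D^u(y',R)$ at a point close to the original intersection. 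I would then define the perturbed holonomy by $\tilde h^s(z) := L^s(z) \cap \mathcal{F}^c D^u(y',R)$, the intersection being selected as the local continuation of the original image $h^s(z) \in \mathcal{F}^c D^u(y,R/2)$. Transversality of $\mathcal{F}^s$ to $\mathcal{F}^{cu}$ and the continuity of these distributions guarantee that $\tilde h^s$ is well-defined, continuous, and has stable projection distance controlled by $\tilde{\rho}_y := 2\rho_y$ once $U_y$ is chosen small enough. This step is essentially a target-side analogue of Lemma~\ref{l.perturb}, where there the source manifold was perturbed.

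Finally, by compactness of $M$ I would extract a finite subcover $U_{y_1},\dots,U_{y_N}$ from $\{U_y\}_{y \in M}$ and set $\epsilon := \min_i \tilde{\epsilon}_{y_i}$ and $\rho := \max_i \tilde{\rho}_{y_i}$. For an arbitrary target disk $D^u(y,R)$, pick $i$ with $y \in U_{y_i}$ and apply the middle step with parameters $\tilde{\epsilon}_{y_i}$ and $\tilde{\rho}_{y_i}$ to obtain the required holonomy. The main obstacle I expect is the perturbation step: Lemma~\ref{l.perturb} handles perturbations of the source of a stable holonomy, and I would need the symmetric statement for perturbations of the target. It should follow from the same transversality argument (stable leaves are uniformly transverse to the $C^0$-continuous center-unstable foliation), but writing it down and tracking how the stable projection distance degrades under perturbation is where the real technical work lies.
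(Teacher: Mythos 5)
Your proposal is correct and follows essentially the same route as the paper: the paper's proof simply invokes Lemma~\ref{l.minimal2} for each target disk of radius $R$ and then says the uniformity of $\epsilon$ and $\rho$ "is just a consequence of the compactness of the manifold $M$." Your middle step (uniformizing over a neighborhood $U_y$ of the target center via a target-side perturbation of the holonomy, using the $R/2$ margin) is precisely the detail the paper leaves implicit in that compactness claim.
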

\begin{proof}[Proof of Lemma~\ref{l.minimal3}]
Fix an arbitrarily chosen $R >0$. 
By Lemma~\ref{l.minimal2} we have the following: for any unstable disk $D^u(R)$ there exists $\epsilon > 0$ and $\rho > 0$ with the properties above. We have to prove that $\epsilon, \rho > 0$ can be chosen independently of the set $D^u(R)$, depending only on the radius $R > 0$. But this is just a consequence of the compactness of the manifold $M$.   
\end{proof}
Now we can prove the Corollary in the following way:
\begin{proof}[Proof of Corollary~\ref{c.unbounded}]
Choose $R > 0$. We fix $\epsilon(R) > 0$ given by Lemma~\ref{l.minimal3} and an unstable disk $D^u(\epsilon)\subset L^u_0$ which has certainly positive measure $\mu_{L^u_0}(D^u(\epsilon)) > 0$. Then consider an arbitrarily chosen unstable disk $D^u(x,NR)$ centered at $x$ of radius $NR$ with $N\in \mathbb{N}$. Then $D^u(x,NR)$ contains at least $N$ disjoint unstable disks $D^u(x_i,R), i=1,\dots,N$ of radius $R$ centered at points $x_i \in D^u(x,NR)$. By Lemma~\ref{l.minimal3} the disk $D^u(\epsilon)$ is mapped into $D^u(x_i,R)$ by stable holonomy, and the holonomy-invariance of our measure implies 
$$\mu_{L^u}(D^u(x,NR)) > \sum_{i=1}^N\mu_{L^u}(D^u(x_i,R)) \geq N*\mu_{L^u_0}(D^u(\epsilon)).$$ 
Accordingly, if $N$ goes to infinity, the measure $\mu_{L^u}(D^u(x,NR))$ also goes to infinity. 
\end{proof}

\subsection{Margulis measure}
For the classification of partially hyperbolic diffeomorphisms with a one-dimensional unstable direction we do not need a complete Margulis measure. But as we have already constructed the family of unstable measures it is quite easy to complete the construction obtaining the complete Margulis measure. Accordingly, we give here a sketch of the proof.
A family of measures $\left\{\mu_{L^s}\right\}$ on the stable leaves are constructed in the exact symmetric way as the family of unstable measures $\left\{\mu_{L^u}\right\}$ above exchanging $f^{-1}$ by $f$. This family is by construction invariant under the center unstable holonomy $h^u$ and contracted by $f$ by a constant $0< \gamma<1$. It is left to prove that $\gamma=\beta^{-1}$.
With these two family of measures we can construct a complete $f$-invariant measure $\mu$ on $M$ by taking the local product of the measures: 
let $U_i$ be a $\mathcal{F}^c$-saturated product neighborhood as defined at the beginning and define $\mu_{U_i}$ on $U_i$ as follows for any non-empty open $\mathcal{F}^c$-saturated set $A\subset M$ with compact closure: 
$$\mu_{U_i}(A)=\int_{\mathcal{F}^s_{loc}(L)}\mu_{L^u}(A \cap \mathcal{F}^u_{loc}(L'))d\mu_{L^s}(L').$$
To assure the well definedness of this measure we have to show that $L' \mapsto \mu_{L^u}(A \cap \mathcal{F}^u_{loc}(L'))$ is a measurable map. This local measure can then be canonically extended to the whole $M$.    
\begin{rem}\begin{enumerate}
\item Looking at the ingredients for the construction of the family $\left\{\mu_{L^u}\right\}$ you recognize that there are three principal: given two foliations $\mathcal{F}, \mathcal{G}$ transverse to each other it is necessary for a family of measures on leaves of $\mathcal{F}$ that the local holonomy along leaves of $\mathcal{G}$ is absolutely continuous. Further, $\mathcal{G}$ must be a minimal foliation. So, if the stable foliation is minimal for a partially hyperbolic $C^2$ diffeomorphism with a compact center with trivial holonomy, then we can in an analogous way construct a family of measures on the center unstable leaves invariant by the strong stable holonomy. You can think of many other dynamics which fulfill these three assumptions allowing an adaptation of the proof above.   
\item While certainly quite technical because of the use of local holonomy covers we think that it is nevertheless possible to adapt the proof for the finite holonomy setting.  
\end{enumerate}
\end{rem}
\subsection{Induced measure on the quotient space.}
Under the hypothesis of this chapter, the quotient space $\pi: M \rightarrow M/\mathcal{F}^c$ of the compact center foliation with trivial holonomy is a topological manifold. 
The family $\left\{\mu_{L^u}\right\}$ of measures can be seen as a family of measures $\left\{\mu_{L^u}\right\}$ on the quotient space such that $\mu_{L^u}$ has its support on the unstable leaf $\pi L^{cu}$: 
every measure $\mu_{L^u}$ is defined on an unstable leaf of the unstable foliation in $M/\mathcal{F}^c$ induced by the center unstable foliation $\mathcal{F}^{cu}$ via $\pi$. 
In the same way, we obtain a measure family $\mu_{L^s}$ on the stable leaves obtained by projecting the stable measure family onto $M/\mathcal{F}^c$. As the complete measure $\mu$ is defined for $\mathcal{F}^c$-saturated sets we can directly project it to a measure on the quotient space $M/\mathcal{F}^c$.  
\begin{rem} Oxtoby and Ulam showed in \cite{OU41} that any Borel measure on $I^n$, the $n$-dimensional cube, is topologically equivalent to the $n$-dimensional Lebesgue-Borel measure if and only if it is everywhere positive (that is, positive for non-empty open sets), non-atomic, normalized and vanishes on the boundary. Hence, especially in the case that the unstable bundle is one-dimensional, then every measure $\mu_{L^u}$ restricted to an open interval is topologically equivalent to Lebesgue $\lambda$, that is, for every $x$ there exists a homeomorphism $h$ such that $h_*(\lambda)=\mu_{L^u}$.  
\end{rem}
 \begin{conj}
 Under the hypothesis of Theorem~\ref{theorem_measure} it is not difficult to see that the induced $F$-invariant measure $\mu$ on the leaf space $M/\mathcal{F}^c$ is an ergodic measure for the induced quotient dynamics $F$. We conjecture that it is the entropy maximizing measure for $F$ being constructed as Margulis measure. 
 \end{conj}

\section{Quotient dynamics is conjugate to hyperbolic torus automorphism }\label{s.torus}
In this section we follow more or less the proof by Hiraide (\cite{H01}) of Franks' Theorem (\cite{F70}). So, we refer the interested reader for details to the original proof and enlist here only the main steps ensuring that the lack of differentiability on the quotient space does not create any problems for us. We also would like to cite \cite{G11} who adapts the same proof to his setting. \\
Franks' Theorem states that any transitive Anosov diffeomorphism with a one-dimensional unstable (or stable) direction is conjugate to a hyperbolic torus automorphism. We establish this statement for the induced dynamics on the quotient space of a compact center foliation with trivial holonomy where the induced unstable foliation is one-dimensional. The quotient space is a priori a topological manifold, and the induced dynamics is a homeomorphism. \\  
Hence, we show the following theorem:  
\begin{otherthm}
Let $f: M \rightarrow M$ be a partially hyperbolic $C^{\infty}$-diffeomorphism with an $f$-invariant uniformly compact center foliation. Assume that $E^u$ is one-dimensional and oriented. Then $F: M/\mathcal{F}^c \rightarrow M/\mathcal{F}^c$ is topologically conjugate to a hyperbolic toral automorphism and $M/\mathcal{F}^c$ is homeomorphic to a $\codim \mathcal{F}^c$-torus.  
\label{theorem_franks}
\end{otherthm}
Theorem~\ref{theorem_franks} implies our main Theorem~\ref{maintheorem} in the following way: let $f$ be as in Theorem~\ref{theorem_franks} but only $C^1$. Using the result of the structural stability of $F$ proved in \cite{BB12} we can perturbe $f$ to obtain a partially hyperbolic $C^{\infty}$-diffeomorphism $g$. Its center foliation $\mathcal{F}^c_g$ is then leafwise conjugate to the center foliation $\mathcal{F}^c_f$ and so are $F$ and $G$. The induced dynamics $G$ is by Theorem~\ref{theorem_franks} conjugate to a hyperbolic toral autormophism, and so is the induced dynamics $F$ which proves Theorem~\ref{maintheorem}.   
\begin{rem}
We know by Theorems~\ref{thm_transitive} and \ref{t.trivial} that the center foliation in the setting of Theorem~\ref{theorem_franks} has only trivial holonomy and that therefore the leaf space $M/\mathcal{F}^c$ is a compact topological manifold. The center-unstable and center-stable foliations in $M$ projects to transverse stable and unstable topological foliations in $M/\mathcal{F}^c$. We recall that the map $F$ is shown to be expansive and transitive on the leaf space $M/\mathcal{F}^c$, and it has the (unique) pseudo-orbit tracing property. \\
The projected measure family $\left\{\mu_{L^u}\right\}$ introduced in the section above consists of non-atomic measures $\mu_{L^u}$ defined on the one-dimensional unstable leaves, positive on non-empty open sets, hence, they are all topologically equivalent to the Lebesgue measure. 
\end{rem}
For the definition of the covering map we need the constructed unstable measure $\mu_{L^u}$ to be infinite if and only if an interval inside $L^{u}$ is unbounded. This is a direct consequence of the construction of the measure and of Corollary~\ref{c.unbounded}: 
\begin{corol}
For any $L^u\in \mathcal{F}^u$ we have $\mu_{L^u}(I)=\infty$ if and only if $I \subset L^{u}$ is an unbounded interval.
\label{corol_measure_infinity}
\end{corol}
Hiraide's proof is divided into four main steps: 
\begin{enumerate}
\item Every stable manifold $L^s$ is homeomorphic to $\mathbb{R}^s$, $s=\dim \mathcal{F}^s$, and every unstable manifold $L^u$ to $\mathbb{R}$. Assuming that $p \in M/\mathcal{F}^c$ is a fixed point of $F$, we show that $L^s_p\times L^u_p = \mathbb{R}^{s+1}$ is a universal cover of $M/\mathcal{F}^c$.
\item The fundamental group $\pi_1(M/\mathcal{F}^c)$ is isomorphic to $\mathbb{Z}^m$ for some $m \in \mathbb{N}$. 
\item The lifted map $\tilde{F}$ of $F$ on the universal cover $L^s_p \times L^u_p$ is topologically conjugate to a linear hyperbolic map $A \in \Gl(m,\mathbb{Z})$. 
\item Finally, we can conclude that this topological conjugacy descends to a topological conjugacy between $F$ and the hyperbolic toral automorphism $\phi_A$ induced by $A$ establishing a homeomorphism between $M/\mathcal{F}^c$ and $\mathbb{T}^m=\mathbb{R}^m/\mathbb{Z}^m$ as well (what implies that $m = s+1$).   
\end{enumerate}

We only give the proof of the first step (Lemma~\ref{lemma_universal_cover} below) as the remaining coincides with Hiraide's proof in \cite{H01}. Before stating the Lemma, we need some formalism: we have a uniform local product structure on $M/\mathcal{F}^c$ given by the induced transverse stable and unstable foliations, so there exists a cover of product neighborhoods $U$ such that we have well defined local holonomy homeomorphism $h^u: I \rightarrow J$ for any $I\subset L^u_1, J \subset L^u_2$ with $I,J \subset U$ defined by $z \in I \mapsto L^s_{loc}(z) \cap J$. We call any finite composition of local holonomy homeomorphisms a holonomy homeomorphism. \\
We may assume that $F:M/\mathcal{F}^c\rightarrow M/\mathcal{F}^c$ has a fixed point $p$: in any case as a consequence of the Shadowing Lemma it has a periodic point $p$ such that $F^n(p)=p$, so we could prove the statement for $G:=F^n$. Then $F^n$ is conjugate to a hyperbolic toral automorphism, and $F$ is therefore homotopic to a hyperbolic toral automorphism. As the fixed point index is a homotopy invariant, we can conclude that $F$ has a fixed point. \\
Therefore, let $p \in M/\mathcal{F}^c$ be a fixed point for $F$. We have $\dim \mathcal{F}^u=1$ and - as $E^{u}$ in $M$ is oriented - we may fix a direction on arcs of $\mathcal{F}^u$. We will construct the simply connected cover for $M/\mathcal{F}^c$ as $L^u_p \times L^s_p$ which is homeomorphic to $\mathbb{R}^{q}$ with $q = \codim \mathcal{F}^c$. Let $(x,y)\in L^u_p \times L^s_p$. For any $x \in L^u_p$ we denote by $\left[p,x\right]$ the arc inside $L^u_p$ with end points $p$ and $x$ and for any $z \in L^u_y$ by $\left[y,z\right]$ the arc inside $L^u_y$ with end points $y$ and $z$. 
\begin{lemma}
The manifold $M/\mathcal{F}^c$ has a universal cover $$\pi_p: L^u_p \times L^s_p \rightarrow M/\mathcal{F}^c$$ where $\pi_p(x,y)= z$ such that $\mu_{L^u}(\left[p,x\right])=\mu_{L^u}(\left[y,z\right])$ and the orientation from $p$ to $x$ is consistent with the orientation from $y$ to $z$. Further, $L^u_p \times L^s_p$ is homeomorphic to $\mathbb{R}^q$ where $q = \codim \mathcal{F}^c$. 
\label{lemma_universal_cover}
\end{lemma}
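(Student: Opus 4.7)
The plan is to define $\pi_p$ using the Margulis measure as a canonical length along unstable leaves, and then verify the covering property by means of the uniform local product structure on $M/\mathcal{F}^c$. Since $L^u_p$ is one-dimensional, oriented, non-compact and simply connected, it is homeomorphic to $\mathbb{R}$; since the center stable foliation has trivial holonomy, the projected stable leaf $L^s_p$ is homeomorphic to $\mathbb{R}^{\dim \mathcal{F}^s}$. Consequently $L^u_p \times L^s_p \cong \mathbb{R}^q$ is simply connected and it suffices to show that $\pi_p$ is a covering map.

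For the construction, fix the orientation on $L^u_p$ and set $t(x):=\pm\mu_{L^u_p}([p,x])$, signed in accordance with that orientation. By Corollary~\ref{corol_measure_infinity} together with non-atomicity and positivity on non-empty open sets (Corollary~\ref{c.nonatomic} and Theorem~\ref{theorem_measure}(1)), the map $x\mapsto t(x)$ is a homeomorphism $L^u_p \to \mathbb{R}$; likewise, for each $y\in L^s_p$, the assignment $w\mapsto \mu_{L^u_y}([y,w])$ is a homeomorphism $L^u_y\to\mathbb{R}$ once we fix on $L^u_y$ the orientation consistent with that of $L^u_p$. I then define $\pi_p(x,y)$ to be the unique $z\in L^u_y$ for which these two signed Margulis lengths coincide. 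Continuity of $\pi_p$ follows from the continuous dependence of $\mu_{L^u_y}$ on $y$, which is built in at the level of the construction of $\{\mu_{L^u}\}$ as a fixed point in the product topology on $\prod_{\phi\in C_c^+}\mathbb{R}_\phi$.

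To upgrade $\pi_p$ to a local homeomorphism and then to a covering map, I will exploit the uniform local product structure on $M/\mathcal{F}^c$: any $w\in M/\mathcal{F}^c$ has a product neighborhood $U\cong I\times D^s$ whose unstable fibers are short arcs, and on each such arc the Margulis measure is preserved by the local stable holonomy (Theorem~\ref{theorem_measure}(2)). Over such a chart $U$, the preimage $\pi_p^{-1}(U)$ thus decomposes into disjoint sheets, each mapped homeomorphically onto $U$ by a corresponding product chart on $L^u_p\times L^s_p$. A Lebesgue number argument on the compact quotient yields uniform size of these charts, so the path-lifting criterion is satisfied. Surjectivity of $\pi_p$ is then obtained by combining minimality of the center stable foliation (Lemma~\ref{l.minimal}) to reach any target center stable leaf from $L^s_p$ with Corollary~\ref{corol_measure_infinity} to cover arbitrarily long unstable arcs.

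The main obstacle will be showing that $z=\pi_p(x,y)$ is independent of the intermediate choices made when one lifts a path from $p$ to $w=\pi_p(x,y)$ through different product charts; a priori, rerouting through another chart could yield a different endpoint on $L^u_y$. This is exactly where the holonomy invariance of the Margulis measure (Theorem~\ref{theorem_measure}(2)) becomes indispensable: it guarantees that the measure length of an unstable arc is preserved under the local stable holonomies used to concatenate charts, so any two such lifts starting from the same point must agree. This invariance is the cement that welds the local pieces together into a well-defined global covering map, and establishes the claimed universal cover.
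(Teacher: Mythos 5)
Your proposal is correct and follows essentially the same route as the paper: define $\pi_p$ by matching signed Margulis lengths of unstable arcs, use non-atomicity, positivity and Corollary~\ref{corol_measure_infinity} for existence and uniqueness of $z$, holonomy invariance together with the local product structure for the covering property, and density/minimality for surjectivity. The only repair needed is the justification of continuity: it does not come from the product-topology fixed-point construction of $\{\mu_{L^u}\}$, but (as in the paper, and as your own chart argument already provides) from the fact that inside a product neighborhood the holonomy invariance of $\mu_{L^u}$ forces $\pi_p(x,y)$ to be given by the continuous canonical coordinates.
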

\begin{proof}[Proof of Lemma~\ref{lemma_universal_cover}]
The most important step in the construction of the universal cover is the proof that the map $\pi_p$ is well-defined. This is done with the help of the family of measures $\mu_{L^u}$ on the unstable leaves. The family of measures is invariant under the holonomy homeomorphisms $h^u$. By the minimality of the foliations it is then implied that for any $(x,y) \in L^u_p \times L^s_p$ that we can map via holonomy a local unstable segment around $x$ onto a local unstable segment around $y$. Therefore there exists a point $z \in L^u_y$ such that $$\mu_{L^u}\left(\left[p,x\right]\right) = \mu_{L^u}\left(\left[y,z\right]\right)$$ and the orientation from $p$ to $x$ is consistent with the orientation from $y$ to $z$. This point is unique as the measure $\mu_{L^u}$ is non-atomic and positive on any non-empty open interval, accordingly, if there would be two points $z_1 < z_2$ in $L^u_y$ it would follow $\mu_{L^u}(\left[y,z_2\right]\setminus \left[y,z_1\right])=\mu_{L^u}((z_1,z_2])=0$ and hence, $z_1=z_2$. Further, every unbounded arc $I$ has the measure $\mu_{L^u}(I)=\infty$ as we showed in Corollary \ref{corol_measure_infinity}. Therefore the map $\pi_p: L^u(p) \times L^s(p) \rightarrow M$ is well-defined by $\left(x,y\right) \mapsto \pi_p(x,y)=z$ such that $\mu_{L^u}\left(\left[p,x\right]\right)=\mu_{L^u}\left(\left[y,\pi_p(x,y)\right]\right)$. \\
\emph{Surjectivity:} let $z \in M$. As the stable and unstable manifold through $z$ is dense we find a point $y \in L^s_p \cap L^u_z$. We have also $L^s_z \cap L^u_p \neq \emptyset$. By moving the interval $[y,z] \subset L^u_y$ via holonomy along the stable foliation to $L^u_p$, we can define $x \in L^u_p$ such that $\mu_{L^u}([p,x]) = \mu_{L^u}([y,z])$. \\
\emph{Continuity:} the point $\pi_p\left((x,y)\right)$ is contained in a product neighborhood and as $\mu_{L^u}$ is holonomy invariant we can conclude that $\pi_p\left(x,y\right) \in L^s(x) \cap L^u(y)$ and $\pi_p$ is continuous (as the canonical coordinates are continuous). 
\emph{Covering:} let $z \in M$ and $N \subset M$ be a product neighborhood for $z$, $N = B^u_{\epsilon}(z) \times B^s_{\epsilon}(z)$ then for $\epsilon > 0$ sufficiently small there are several disjoint stable sets $I^s \subset L^s_p$ such that $I_s \subset L^s_p \cap \mathcal{F}^u(B^s_{\epsilon}(z))$ which are all homeomorphic to $B^s_{\epsilon}(z)$ by holonomy. These correspond to disjoint unstable segments $I^u \subset L^u_p$. So the set $\pi_p^{-1}(N)$ is a union of disjoint sets homeomorphic to $N$.\\
Further, $L^u_p \times L^s_p$ is homeomorphic to $\mathbb{R} \times \mathbb{R}^{q-1}$ as they are stable and unstable leaves, and so the cover is simply connected. It can be concluded that the map $\pi_p: L^u_p \times L^s_p \rightarrow M/\mathcal{F}^c$ is a universal cover for $M/\mathcal{F}^c$. 
\end{proof}
The remaining three steps to conclude the proof follow exactly Hiraide's proof in \cite{H01} and are restated with details and the notation as above in \cite{B12}. To keep this article self-explanatory we sketch the following steps of the proof: we have established $L^u_p \times L^s_p$ as a universal cover of $M/\mathcal{F}^c$ which is homemorphic to $\mathbb{R}^q$. We now show that the fundamental group $\pi_1(M/\mathcal{F}^c)$ is homeomorphic to $\mathbb{Z}^m$ for some $m \in \mathbb{N}$ which is a priori not $q$. 
\begin{lemma}\label{l.fundamentalgroup}
The fundamental group $\pi_1(M/\mathcal{F}^c)$ is homeomorphic to $\mathbb{Z}^m$ for some $m \in \mathbb{N}$. 
\end{lemma}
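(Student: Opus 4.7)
The plan is to identify $\pi_1(M/\mathcal F^c)$ with the deck transformation group $\Gamma$ of the universal cover $\pi_p\colon L^u_p\times L^s_p\to M/\mathcal F^c$ constructed in Lemma~\ref{lemma_universal_cover}, and to realize $\Gamma$ as a discrete cocompact subgroup of $\mathbb R^q\cong L^u_p\times L^s_p$ by using the Margulis family $\{\mu_{L^u}\}$ (and its stable counterpart) to turn deck transformations into translations. Throughout I work with the canonical lift $\tilde F$ of $F$ to $L^u_p\times L^s_p$ fixing the origin $(p,p)$; each $\gamma\in\Gamma$ commutes with $\tilde F$ and acts freely, properly discontinuously, with compact quotient $M/\mathcal F^c$.

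First I would show that every $\gamma\in\Gamma$ has a product form $\gamma(x,y)=(\alpha_\gamma(x),\beta_\gamma(y))$. By the construction of $\pi_p$ each slice $L^u_p\times\{y_0\}$ is mapped homeomorphically by $\pi_p$ onto the unstable leaf $L^u_{\pi_p(p,y_0)}$ and is therefore a connected component of the $\pi_p$-preimage of that unstable leaf; the symmetric statement, that $\{x_0\}\times L^s_p$ is a connected component of the preimage of a stable leaf in the base, follows from the holonomy invariance of $\mu_{L^u}$ (so that moving along the stable direction while keeping the $\mu_{L^u}$-distance constant is exactly stable holonomy, which stays inside a single stable leaf). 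Since $\pi_p\circ\gamma=\pi_p$, the homeomorphism $\gamma$ must permute these connected components, that is, preserve both the lifted unstable and the lifted stable foliations, which forces the product decomposition.

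Next I would linearize each factor using the Margulis measures. Define the unstable coordinate $\Phi^u\colon L^u_p\to\mathbb R$ by $\Phi^u(x)=\pm\mu_{L^u_p}([p,x])$, signed by the orientation of $E^u$; Corollary~\ref{c.nonatomic}, positivity on non-empty open sets, and Corollary~\ref{c.unbounded} together show that $\Phi^u$ is an orientation-preserving homeomorphism. Because $\pi_p\circ\gamma=\pi_p$ and $\pi_p$ identifies the lifted unstable measure on each slice $L^u_p\times\{y_0\}$ with $\mu_{L^u}$ on the base, $\alpha_\gamma$ preserves this measure; in the coordinate $\Phi^u$ it becomes an orientation-preserving homeomorphism of $\mathbb R$ preserving Lebesgue, hence a translation $t\mapsto t+a_\gamma$. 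A parallel argument using the symmetric stable family $\{\mu_{L^s}\}$ from Section~\ref{s.margulis} supplies a translation coordinate on $L^s_p$ in which $\beta_\gamma$ is a translation. Consequently $\Gamma$ embeds into $(\mathbb R^q,+)$ and is in particular abelian and torsion-free.

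Finally, $\Gamma$ is a subgroup of $(\mathbb R^q,+)$ acting freely, properly discontinuously, and cocompactly on $\mathbb R^q$; proper discontinuity together with the embedding in $\mathbb R^q$ forces discreteness, and a discrete subgroup of $\mathbb R^q$ is free abelian of finite rank, so $\Gamma\cong\mathbb Z^m$ for some $m\in\mathbb N$, as required. The main obstacle in this plan is the stable side of the linearization: since $\dim L^s_p=q-1$ may exceed one, a single scalar measure does not at once produce a translation coordinate, and one has to assemble the stable Margulis family, using its holonomy invariance and absolute continuity, into $q-1$ translation-invariant coordinates. An alternative route, closer to Hiraide's \cite{H01}, is to first conclude only that $\Gamma$ is finitely generated and torsion-free with compact quotient on $\mathbb R^q$, and then derive the abelian structure a posteriori from the hyperbolic linearization of $\tilde F$ carried out in the subsequent step of the program.
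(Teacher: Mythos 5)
Your unstable-side step is exactly the paper's: holonomy invariance of $\mu_{L^u}$ turns the first-coordinate action $\tilde\alpha$ of every deck transformation into a translation of $\mathbb{R}$ in the measure coordinate. The genuine gap is the stable side, which you flag but do not close, and which your overall strategy cannot do without: to embed the deck group $\Gamma$ into $(\mathbb{R}^q,+)$ you need $\beta_\gamma$ to be a translation of $L^s_p\cong\mathbb{R}^{q-1}$, but the stable Margulis family of Section~\ref{s.margulis} attaches a single scalar measure to each $(q-1)$-dimensional stable leaf, and a homeomorphism of $\mathbb{R}^{q-1}$ preserving one such measure is far from being a translation once $q-1>1$ (already measure-preserving homeomorphisms of the plane include rotations, shears and much wilder maps). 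Nothing in the construction yields the $q-1$ separate translation-invariant coordinates your plan would require, so the embedding $\Gamma\hookrightarrow(\mathbb{R}^q,+)$ — and with it the discreteness conclusion, which would otherwise be fine — is unsupported. Your fallback (prove only that $\Gamma$ is finitely generated, torsion-free and acts freely, properly discontinuously and cocompactly on $\mathbb{R}^q$, then recover commutativity from the hyperbolic linearization of $\tilde F$) does not work either: such groups need not be abelian (surface groups for $q=2$, nilmanifold lattices in general), and the linearization step of the paper is built on the isomorphism $\pi_1(M/\mathcal{F}^c)\cong\mathbb{Z}^m$, so invoking it here is circular.

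The missing idea is that the stable direction never needs to be linearized: the paper shows the homomorphism $\alpha\mapsto\tilde\alpha$ into the translation group of $\mathbb{R}$ is \emph{injective}. Indeed, if $\tilde\alpha=\mathrm{id}$, then $\alpha$ preserves a slice $\left\{q\right\}\times L^s_p$; the restriction of $\pi_p$ to this slice is a covering of a stable leaf of $M/\mathcal{F}^c$, which is simply connected, so this restriction is injective, $\alpha$ has a fixed point, and a deck transformation of the universal cover of Lemma~\ref{lemma_universal_cover} with a fixed point is the identity. Hence $\pi_1(M/\mathcal{F}^c)$ embeds into $(\mathbb{R},+)$, so it is abelian and torsion-free, and being the fundamental group of a compact manifold it is finitely generated, which gives $\mathbb{Z}^m$. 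Note that this route does not (and cannot) use discreteness of the translation group in $\mathbb{R}$: by Lemma~\ref{l.denseorbit} that group is typically dense, and the finite rank comes from finite generation rather than from a lattice argument of the kind your proposal aims at.
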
  
\begin{proof}[Proof of Lemma~\ref{l.fundamentalgroup}]
The fundamental group $\pi_1(M/\mathcal{F}^c)$ is isomorphic to the group of all covering transformations $\alpha: L^u_p \times L^s_p \rightarrow L^u_p \times L^s_p$, i.e. $\pi_p \circ \alpha = \pi_p$. We identify both groups in this proof. For $x \in L^u_p$ we denote by $\tilde{\alpha}(x) \in L^u_p$ the first coordinate of $\alpha((x,y))$ for any $y \in L^s_p$. It follows that $\mu_{L^u}(I) = \mu_{L^u}(\tilde{\alpha}(I))$ for any arc $I \subset L^u_p$ as $\mu_{L^u}$ is holonomy invariant. So, $\tilde{\alpha}: L^u_p \rightarrow L^u_p$ is a translation of $\mathbb{R}$ and $\left\{\tilde{\alpha}\;\big|\; \alpha \in \pi_1(M/\mathcal{F}^c)\right\}$ is a free abelian group. Now take $\tilde{\alpha}$ as identity and $q \in L^u_p$. Then $\pi_p: \left\{q\right\} \times L^s_p \rightarrow L^s_p$ is a covering map and as $L^s_p$ is simply connected, $\alpha$ is the identity. Accordingly, we can conclude that $\pi_1(M/\mathcal{F}^c)$ is isomorphic to $\left\{\tilde{\alpha}\;\big|\; \alpha \in \pi_1(M/\mathcal{F}^c)\right\}$ which is isomorphic to $\mathbb{Z}^m$ for some $m \in \mathbb{N}$. 
\end{proof}
The following fact is quite immediatly implied by the minimality of the stable and unstable foliation:
\begin{lemma}\label{l.denseorbit}
The set $\bigcup_{\alpha \in \pi_1(M/\mathcal{F}^c)}\tilde{\alpha}(p)$ is dense in $L^u_p$. 
\end{lemma}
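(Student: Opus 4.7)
The plan is to reinterpret the set $\bigcup_{\alpha}\tilde\alpha(p)\subset L^u_p$ as the first-coordinate projection of the preimage $\pi_p^{-1}(L^s_p)$ in the universal cover $L^u_p\times L^s_p$, and to deduce its density in $L^u_p$ from the minimality of the stable foliation on $M/\mathcal{F}^c$.

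First I would extend the argument used in the proof of Lemma~\ref{l.fundamentalgroup}: since each deck transformation $\alpha$ preserves both lifted foliations (unstable leaves are the horizontal $L^u_p\times\{y\}$ and stable leaves are the vertical $\{x\}\times L^s_p$), it must factor as $(x,y)\mapsto(\tilde\alpha(x),\alpha_s(y))$, where $\tilde\alpha$ is the translation of $L^u_p\cong\mathbb{R}$ already constructed and $\alpha_s\colon L^s_p\to L^s_p$ is a homeomorphism. The vertical leaf $\{p\}\times L^s_p$ is the component of $\pi_p^{-1}(L^s_p)$ through $(p,p)$, and its deck translates $\alpha\cdot(\{p\}\times L^s_p)=\{\tilde\alpha(p)\}\times L^s_p$ exhaust the remaining components, so
\begin{equation*}
\pi_p^{-1}(L^s_p)\;=\;\Bigl(\bigcup_{\alpha\in\pi_1(M/\mathcal{F}^c)}\{\tilde\alpha(p)\}\Bigr)\times L^s_p.
\end{equation*}

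Next I would invoke minimality. By Lemma~\ref{l.minimal} the center stable foliation $\mathcal{F}^{cs}$ is minimal on $M$; projecting under the quotient map $M\to M/\mathcal{F}^c$, and using that each center stable leaf is $\mathcal{F}^c$-saturated and therefore projects onto a single stable leaf of the induced foliation, this minimality descends to the stable foliation of $M/\mathcal{F}^c$. Hence $L^s_p$ is dense in $M/\mathcal{F}^c$, and because $\pi_p$ is a covering map, density of $L^s_p$ is equivalent to density of $\pi_p^{-1}(L^s_p)$ in $L^u_p\times L^s_p$. Combined with the product decomposition above, this forces $\bigcup_\alpha\{\tilde\alpha(p)\}$ to be dense in $L^u_p$, which is the claim.

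The only non-cosmetic point to verify is the product factorisation $\alpha=(\tilde\alpha,\alpha_s)$, i.e.\ the independence of the stable component from the unstable coordinate; this is a direct consequence of the deck action preserving both lifted foliations simultaneously. Everything else is a routine translation between minimality in the base and density in the cover.
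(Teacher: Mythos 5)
Your proof is correct and rests on exactly the ingredient the paper invokes: the minimality of the center stable foliation (Lemma~\ref{l.minimal}) descending to the quotient stable foliation, the paper itself only asserting that the lemma is ``quite immediately implied'' by this. Your covering-space bookkeeping — the product form $\alpha=(\tilde\alpha,\alpha_s)$ of deck transformations, the identification of the components of $\pi_p^{-1}(L^s_p)$ with the verticals $\{\tilde\alpha(p)\}\times L^s_p$ (using that each lifted stable leaf covers, hence maps onto, the simply connected leaf $L^s_p$, as in the proof of Lemma~\ref{l.fundamentalgroup}), and the passage from density downstairs to density of the preimage — is sound and simply makes explicit the argument the paper leaves implicit.
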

Now we define a map $\tilde{F}:= F|_{L^u_p} \times F|_{L^s_p}$ induced by $F$ on the universal cover and show that $\tilde{F}$ is conjugate to a hyperbolic linear map $A \in \Gl(m,\mathbb{Z})$. The induced map $\tilde{F}_\#$ on the fundamental group is linear. This fact together with Lemma~\ref{l.denseorbit} implies that the map $\tilde{F}$ is a linear expanding map on the first coordinate, so there exists $0 < \lambda < 1$ such that $\mu_{L^u}(\tilde{F}(I))=\lambda^{-1}\mu_{L^u}(I)$. The fundamental group $\pi_1(M/\mathcal{F}^c)$ is isomorphic by an isomorphism $\phi$ to $\mathbb{Z}^m$. So we can define a map $A: \mathbb{Z}^m \rightarrow \mathbb{Z}^m$ by $A:=\phi^{-1} \circ \tilde{F}_\# \circ \phi$. \\
The next stept is the construction of a topological conjugacy $H$ between $\tilde{F}$ and $A$. This can be done as following: recall that $F$ has the unique orbit tracing property. By pulling back the metric from $M/\mathcal{F}$ to the universal cover one can show that the map $\tilde{F}$ has also the unique pseudo orbit tracing property with respect to this pullback metric. Then we construct a bijection $\Phi$ between the lattice $\mathbb{Z}^m$ of $\mathbb{R}^m$ to the lattice $\pi_p^{-1}(p)$ of $L^u_p \times L^s_p$ by taking $\Phi(\alpha(0))=\phi(\alpha)(p,p)$. Now we consider an orbit $\left\{A^iz\right\}$ for some $z \in \mathbb{R}^m$. Then there exists a constant $c > 0$ and a sequence of points $z_i \in \mathbb{Z}^m$ such that $z_i$ is at a distance less than $c$ from $A^iz$. The sequence $\Phi(z_i)$ is then a $\eta$-pseudo orbit of $\tilde{F}$ for some $\eta > 0$, and we can find consequently $\delta >0$ and a point $(x,y)\in L^u_p \times L^s_p$ such that the orbit $\tilde{F}^i(x,y)$ $\delta$-shadows the pseudo orbit $\Phi(z_i)$. So we define $H: \mathbb{R}^m \rightarrow L^u_p \times L^s_p$ by $H(z)=(x,y)$. The map $H$ is a semi conjugacy between $A$ and $F$. It remains to prove the continuity of $H$ and that there is a continuous inverse.   This is done by mapping a point $z \in \mathbb{R}^m$ onto the unique point $(x,y) \in L^u_p \times L^s_p$ which is the shadowing orbit of $\left\{A^ix\right\}$.\\
Up to now, the map $A$ is known to be linear, so we have to show that $A$ is in fact hyperbolic. 
\begin{lemma}\label{l.hyperbolic}
The linear map $A$ as defined above is hyperbolic.
\end{lemma}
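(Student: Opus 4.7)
\emph{Proof plan.} The strategy is to descend the conjugacy $H\colon \mathbb{R}^m \to L^u_p \times L^s_p$ to a topological conjugacy between $\phi_A$ and $F$ on the compact spaces $\mathbb{T}^m$ and $M/\mathcal{F}^c$, and then deduce hyperbolicity of $A$ from the expansivity of $F$.

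First I would verify that $H$ is equivariant under the deck transformations: for every $z \in \mathbb{R}^m$ and every $n \in \mathbb{Z}^m$ one has $H(z+n) = T_{\phi(n)}(H(z))$, where $T_{\phi(n)}$ denotes the covering transformation on $L^u_p \times L^s_p$ corresponding to $\phi(n) \in \pi_1(M/\mathcal{F}^c)$. This follows from the construction of $H$: translating the approximating lattice sequence $(z_i)$ associated to $z$ by $(A^i n)$ produces an admissible pseudo-orbit whose unique shadowing orbit is $T_{\phi(n)}\tilde{F}^i(H(z))$, and the uniqueness clause of the shadowing property together with the equivariance of $A$ under translations by $\mathbb{Z}^m$ identifies this with $\tilde{F}^i(H(z+n))$.

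Given equivariance, $H$ descends to a continuous bijection $\bar{H}\colon \mathbb{T}^m \to M/\mathcal{F}^c$; since both spaces are compact Hausdorff, $\bar{H}$ is automatically a homeomorphism. On the other hand, $A \in \Gl(m,\mathbb{Z})$ preserves the lattice $\mathbb{Z}^m$ and therefore induces a continuous group automorphism $\phi_A\colon \mathbb{T}^m \to \mathbb{T}^m$. The identity $H \circ A = \tilde{F} \circ H$ descends to $\bar{H} \circ \phi_A = F \circ \bar{H}$, so $F$ and $\phi_A$ are topologically conjugate on the compact leaf space.

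The induced homeomorphism $F$ on $M/\mathcal{F}^c$ is expansive, as established earlier in the paper, and expansivity is a topological invariant on compact metric spaces; hence $\phi_A$ is expansive as well. It is a classical result that a toral automorphism $\phi_A\colon \mathbb{T}^m \to \mathbb{T}^m$ is expansive if and only if $A$ has no eigenvalue on the unit circle, i.e.\ $A$ is hyperbolic. This yields the conclusion. The main technical obstacle is the equivariance of $H$, since it requires matching the $\mathbb{Z}^m$-action on $\mathbb{R}^m$ with the $\pi_1(M/\mathcal{F}^c)$-action on $L^u_p \times L^s_p$ through the uniqueness part of the shadowing; the descent and the passage from expansivity to hyperbolicity are then routine.
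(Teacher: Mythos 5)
There is a genuine gap: your argument is circular at the step ``$H$ descends to a continuous bijection $\bar{H}$''. At the point where Lemma~\ref{l.hyperbolic} is invoked, $H$ is only a \emph{semi}conjugacy obtained from the shadowing property of $\tilde{F}$; neither injectivity nor surjectivity (nor even the dimension equality $m=q$, which the paper only deduces at the very end) is available. In the paper's scheme the continuous inverse $H'$ is constructed \emph{after} this lemma, by running the same shadowing construction for the linear map $A$ instead of $\tilde{F}$ -- and that construction needs $A$ to be hyperbolic, since a non-hyperbolic element of $\Gl(m,\mathbb{Z})$ does not have the pseudo-orbit tracing property. So the homeomorphism $\bar{H}\colon \mathbb{T}^m \to M/\mathcal{F}^c$ you want to use is exactly what the hyperbolicity of $A$ is needed to establish; you cannot assume it to prove hyperbolicity. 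Note also that without bijectivity the expansivity transfer fails: if $\bar{H}$ is merely a continuous equivariant surjection with $\bar{H}\circ\phi_A=F\circ\bar{H}$, expansivity of the factor $F$ says nothing about expansivity of $\phi_A$ (distinct points of $\mathbb{T}^m$ collapsed by $\bar{H}$ could stay forever close), so the final step would collapse as well.

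The paper avoids all of this by arguing directly with the semiconjugacy on the universal cover: if $A$ were not hyperbolic there would be $z\neq 0$ whose full $A$-orbit is bounded; the construction of $H$ then forces the $\tilde{F}$-orbit of $H(z)$ to be bounded in $L^u_p\times L^s_p$, but the only point with bounded $\tilde{F}$-orbit is the lift $(p,p)$ of the fixed point (the unstable coordinate expands the Margulis measure forward, the stable one backward), and $H(z)=(p,p)$ forces $z=0$, a contradiction. If you want to keep the spirit of your approach, you would have to replace the appeal to a conjugacy by this kind of boundedness argument, which only uses the semiconjugacy and the structure of $\tilde{F}$ on the cover.
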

\begin{proof}[Proof of Lemma~\ref{l.hyperbolic}]
Suppose that $A$ is not hyperbolic. Then there exists a point $z \in \mathbb{R}^m\setminus\left\{0\right\}$ such that $\left\|Az\right\|= \left\|z\right\|$. Hence, the whole orbit $\left\{A^iz\right\}$ is bounded. This implies that the orbit $\left\{\tilde{F}^iH(z)\right\}$ is also bounded. But then it is $H(z)=(p,p)$ and therefore $z=0$ contradicting the assumption. 
\end{proof}
Thanks to the hyperbolicity of $A$ we can construct an inverse of $H$ by repeating the same procedure above for the map $A$ instead of $\tilde{F}$. We obtain a continuous map $H'$ and show that it is the inverse of $H$. Consequently, the dimensions of $\mathbb{R}^m$ and $L^u_p \times L^s_p$ must be equal, so it is $m=q$, and the conjugacy $H$ on the universal cover decends directly to a topological conjugacy $h$ from $\mathbb{T}^q$ to  $M/\mathcal{F}^c$ between $F$ and the  hyperbolic toral automorphism induced by $A$. This finishes the proof. 

\bibliographystyle{alpha}
\bibliography{bibfile}
\end{document}